\numberwithin{equation}{section}
\theoremstyle{plain}
\newcommand{\pr}{\mathbb P}
\newcommand{\ep}{\mathbb E}
\newcommand{\rew}[2]{\mathbb E_{#1}\left[T_{#2}^{A}\right]}
\newcommand{\rep}[3]{\mathbb E_{({#3},{#1})}\left[T_{({#3},{#2})}^{P_{#3}}\right]}
\newcommand{\alp}[3]{\alpha_{#1}\langle{#2};{#3}\rangle}
\DeclareMathOperator{\ed}{Exponential}
\newtheorem{theorem}{Theorem}
\newtheorem{lemma}{Lemma}
\newtheorem*{theorem:eij}{Theorem \ref{thm:eij}}
\begin{document}
\begin{frontmatter}
\title{Clearing Analysis on Phases:\\ Exact Limiting Probabilities for Skip-Free, Unidirectional, Quasi-Birth-Death Processes}
%\thanksref{T1}}
\runtitle{Clearing Analysis on Phases}
%\thankstext{T1}{Footnote to the title with the ``thankstext'' command.}

\begin{aug}
\author{\fnms{Sherwin} \snm{Doroudi}\thanksref{m1},
\ead[label=e1]{sdoroudi@andrew.cmu.edu}}
\author{\fnms{Brian} \snm{Fralix}\thanksref{t2,m2},
\ead[label=e2]{bfralix@clemson.edu}}\\
\and
\author{\fnms{Mor} \snm{Harchol-Balter}\thanksref{t3,m1}
\ead[label=e3]{harchol@cs.cmu.edu}
\ead[label=u1,url]{http://www.foo.com}
}

\affiliation{Carnegie Mellon University\thanksmark{m1} and Clemson University\thanksmark{m2}}

%\thankstext{t1}{Carnegie Mellon University, Tepper School of Business}
\thankstext{t2}{Partially supported by NSF grant CMMI-1435261.}
\thankstext{t3}{Funded by NSF-CMMI-1334194 and NSF-CSR-1116282, by the Intel Science and Technology Center for Cloud Computing, and by a Google Faculty Research Award 2015/16.}
\runauthor{S. Doroudi, B. Fralix, M. Harchol-Balter}

%\affiliation{Some University\thanksmark{m1} and Another University\thanksmark{m2}}

%\address{Address of the First and Second authors\\
%Usually a few lines long\\
\printead{e1,e2,e3}\\
%\phantom{E-mail:\ }\printead*{e2}}

%\address{Address of the Third author\\
%Usually a few lines long\\
%Usually a few lines long\\
%\printead{e3}\\
%\printead{u1}}
\end{aug}

\begin{abstract}
Many problems in computing, service, and manufacturing systems can be modeled via infinite repeating Markov chains with an infinite number of levels and a finite number of phases.
Many such chains are quasi-birth-death processes (QBDs) with transitions that are \emph{skip-free in level}, in that one can only transition between consecutive levels, and \emph{unidirectional in phase}, in that one can only transition from lower-numbered phases to higher-numbered phases.   We present a procedure, which we call Clearing Analysis on Phases (CAP), for determining the limiting probabilities of such Markov chains exactly.  The CAP method yields the limiting probability of each state in the repeating portion of the chain as a linear combination of \emph{scalar bases} raised to a power corresponding to the level of the state.  The weights in these linear combinations can be determined by solving a finite system of linear equations.
\end{abstract}

\begin{keyword}[class=MSC]
\kwd{60J22}
\kwd{60J27}
%\kwd[; secondary ]{60K35}
\end{keyword}

\begin{keyword}
\kwd{Markov chains}
\kwd{quasi-birth-death processes}
\end{keyword}

\end{frontmatter}

\section{Introduction}

This paper studies the stationary distribution of Class $\mathbb M$ Markov chains, which are continuous time Markov chains (CTMCs)\footnote{The methodology presented in this paper can easily be modified to apply to discrete time Markov chains.} having the following properties (see Fig. \ref{fig:chain} and Fig. \ref{fig:detail}):
\begin{itemize}

\item The Markov chain has a state space, $\mathcal E$, that can be decomposed as $\mathcal E = \mathcal{R} \cup \mathcal{N}$, where $\mathcal R$ represents the infinite \emph{repeating portion} of the chain, and $\mathcal{N}$ represents the finite \emph{nonrepeating portion} of the chain.\footnote{We note that this partition is not unique.}

\item The repeating portion is given by
\begin{align*} \mathcal{R} &\equiv \{(m,j):  0 \leq m \leq M, j \geq j_{0}\} \end{align*} where both $M$ and $j_{0}$ are finite nonnegative integers.  We refer to a state $(m,j) \in \mathcal{R}$ as currently being in phase $m$ and level $j$.  For each $j \geq j_{0}$, level $j$ is given by
\begin{align*} L_{j} \equiv \{(0,j), (1,j), \ldots, (M,j)\}. \end{align*}
Throughout this paper, we index phases by $i$, $k$, $m$, and $u$, and we index levels by $j$ and $\ell$.

\item Transitions between a pair of states in $\mathcal{N}$ may exist with any rate.

\item Transitions from states in $\mathcal{N}$ to states in $\mathcal{R}$ may only go into states in $L_{j_{0}}$, but may exist with any rate.

\item Transitions from states in $\mathcal{R}$ to states in $\mathcal{N}$ may only come from states in $L_{j_{0}}$, but may exist with any rate.

\item Transitions between two states in $\mathcal{R}$ that are both in the same phase, $m$, (e.g., the ``horizontal'' transitions in Fig. \ref{fig:chain} and Fig. \ref{fig:detail}) are described as follows, with $q(x,y)$ denoting the transition rate from state $x$ to state $y$:
\begin{align*}
\lambda_{m}&\equiv q((m,j), (m,j+1))& (&0 \leq m \leq M,\ \  j \geq j_{0}) \\
\mu_{m}&\equiv q((m,j), (m,j-1))& (&0 \leq m \leq M,\ \  j \geq j_{0} + 1).
\end{align*}

\item We express transition rates between two states in $\mathcal{R}$, which transition out of a state in phase $m$ to a state in another phase (e.g., the ``vertical'' transitions in Fig. \ref{fig:chain} and the ``vertical'' and ``diagonal'' transitions in Fig. \ref{fig:detail}) using the notation $\alp m{\Delta_1}{\Delta_2}$, where $\Delta_1\ge 1$ is the increase in phase from $m$ to $m+\Delta_1$ (i.e., the ``vertical'' shift) and $\Delta_2\in\{-1,0,1\}$ is the change in level, if any, from $j$ to $j+\Delta_2$ (i.e., the ``horizontal'' shift).  Note that $\Delta_1\ge1$ indicates that only transitions to higher-numbered phases are allowed, while $\Delta_2\in\{-1,0,1\}$ indicates that each transition may change the level by at most 1 in either direction. More specifically, these transitions are described as follows:
\begin{align*}
\alp m{i-m}{-1}&\equiv q((m,j), (i,j-1))& (&0 \leq m < i \leq M,\ \  j \geq j_{0}+1) \\
\alp m{i-m}0&\equiv q((m,j), (i,j)) & (&0 \leq m < i \leq M,\ \  j \geq j_{0}) \\
\alp m{i-m}{1}&\equiv q((m,j), (i, j+1)) & (&0 \leq m < i \leq M,\ \  j \geq j_{0}).
\end{align*}
We will also use the shorthand notation
\begin{align*} \alpha_{m} %&\equiv \sum_{i={m+1}}^{M}\sum_{\Delta=-1}^1 \alp m{i-m}\Delta\\
&=\sum_{i={m+1}}^{M}(\alp m{i-m}{-1}+ \alp m{i-m}{0} +\alp m{i-m}1)\end{align*} throughout the paper to represent the \emph{total outgoing transition rate to other phases} from states in phase $m$ with level $j\ge j_0+1$.

\item The Markov chain must be ergodic.

\end{itemize}

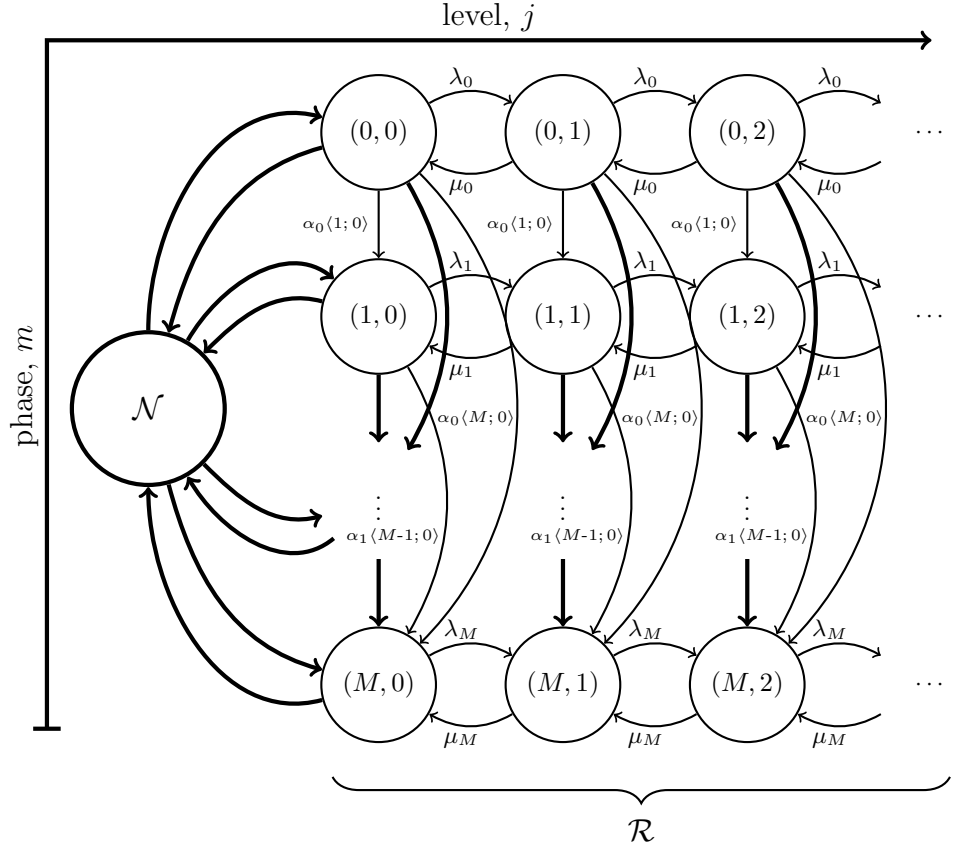
\begin{figure}
\begin{center}
\begin{tikzpicture}[scale=2.45]
\draw [ultra thick,|->] (-.8,-3.25) -- (-.8,.5) -- (4,.5);
\node[above] at (1.6,.5) {\large {level, $j$}};
\node[left] at (-.8,-1.375) {\rotatebox{90}{{\large phase, $m$}}};
\node[draw, circle, ultra thick, name=nrp, minimum size=.8in] at (-.25,-1.5) {{\large $\mathcal N$}};
\node[draw, circle, thick, name=11, minimum size=.6in] at (1,0) {{\small $(0,0)$}};
\node[draw, circle, thick, name=12, minimum size=.6in] at (2,0) {{\small $(0,1)$}};
\node[draw, circle, thick, name=13, minimum size=.6in] at (3,0) {{\small $(0,2)$}};
\node[circle, thick, name=1d, minimum size=.6in] at (4,0) {{$\cdots$}};
\node[draw, circle, thick, name=21, minimum size=.6in] at (1,-1) {{\small $(1,0)$}};
\node[draw, circle, thick, name=22, minimum size=.6in] at (2,-1) {{\small $(1,1)$}};
\node[draw, circle, thick, name=23, minimum size=.6in] at (3,-1) {{\small $(1,2)$}};
\node[circle, thick, name=2d, minimum size=.6in] at (4,-1) {{$\cdots$}};
\node[circle, thick, name=d1, minimum size=.6in] at (1,-2) {{$\vdots$}};
\node[circle, thick, name=d2, minimum size=.6in] at (2,-2) {{$\vdots$}};
\node[circle, thick, name=d3, minimum size=.6in] at (3,-2) {{$\vdots$}};
\node[draw, circle, thick, name=k1, minimum size=.6in] at (1,-3) {{\small $(M,0)$}};
\node[draw, circle, thick, name=k2, minimum size=.6in] at (2,-3) {{\small $(M,1)$}};
\node[draw, circle, thick, name=k3, minimum size=.6in] at (3,-3) {{\small $(M,2)$}};
\node[circle, thick, name=kd, minimum size=.6in] at (4,-3) {{$\cdots$}};
\draw[ultra thick,->] (nrp) to[out=90,in=165] (11);
\draw[ultra thick,->] (11) to[out=195,in=75] (nrp);
\draw[ultra thick,->] (nrp) to[out=60,in=140] (21);
\draw[ultra thick,->] (21) to[out=165,in=45] (nrp);
\draw[ultra thick,<-] (nrp) to[out=-60,in=-140] (d1);
\draw[ultra thick,<-] (d1) to[out=-165,in=-45] (nrp);
\draw[ultra thick,<-] (nrp) to[out=-90,in=-165] (k1);
\draw[ultra thick,<-] (k1) to[out=-195,in=-75] (nrp);
\draw[thick,->] (11) to[out=30,in=150] (12);
\draw[thick,->] (12) to[out=30,in=150] (13);
\draw[thick,->] (13) to[out=30,in=150] (1d);
\draw[thick,->] (21) to[out=30,in=150] (22);
\draw[thick,->] (22) to[out=30,in=150] (23);
\draw[thick,->] (23) to[out=30,in=150] (2d);
\draw[thick,->] (k1) to[out=30,in=150] (k2);
\draw[thick,->] (k2) to[out=30,in=150] (k3);
\draw[thick,->] (k3) to[out=30,in=150] (kd);
\draw[thick,->] (12) to[out=-150,in=-30] (11);
\draw[thick,->] (13) to[out=-150,in=-30] (12);
\draw[thick,->] (1d) to[out=-150,in=-30] (13);
\draw[thick,->] (22) to[out=-150,in=-30] (21);
\draw[thick,->] (23) to[out=-150,in=-30] (22);
\draw[thick,->] (2d) to[out=-150,in=-30] (23);
\draw[thick,->] (k2) to[out=-150,in=-30] (k1);
\draw[thick,->] (k3) to[out=-150,in=-30] (k2);
\draw[thick,->] (kd) to[out=-150,in=-30] (k3);
\draw[thick,->] (11) to (21);
\draw[ultra thick,->] (21) to (d1);
\draw[ultra thick,->] (d1) to (k1);
\draw[thick,->] (12) to (22);
\draw[ultra thick,->] (22) to (d2);
\draw[ultra thick,->] (d2) to (k2);
\draw[thick,->] (13) to (23);
\draw[ultra thick,->] (23) to (d3);
\draw[ultra thick,->] (d3) to (k3);
\draw[ultra thick,->] (11) to[out=-60,in=60] (d1);
\draw[thick,->] (11) to[out=-45,in=45] (k1);
\draw[thick,->] (21) to[out=-60,in=60] (k1);
\draw[ultra thick,->] (12) to[out=-60,in=60] (d2);
\draw[thick,->] (12) to[out=-45,in=45] (k2);
\draw[thick,->] (22) to[out=-60,in=60] (k2);
\draw[ultra thick,->] (13) to[out=-60,in=60] (d3);
\draw[thick,->] (13) to[out=-45,in=45] (k3);
\draw[thick,->] (23) to[out=-60,in=60] (k3);
\foreach \x in {1,2,3} {
\node[left] at (\x,-.5) {{\tiny $\alp010$}};
\node at (\x+.45,.3) {{\footnotesize $\lambda_0$}};
\node at (\x+.45,-.7) {{\footnotesize $\lambda_1$}};
\node at (\x+.45,-2.7) {{\footnotesize $\lambda_M$}};
\node at (\x+.45,-.3) {{\footnotesize $\mu_0$}};
\node at (\x+.45,-1.3) {{\footnotesize $\mu_1$}};
\node at (\x+.45,-3.3) {{\footnotesize $\mu_M$}};
\node at (\x+.08,-2.19) {{\tiny $\alp1{M\mbox{-}1}0$}};
\node at (\x+.53,-1.55) {{\tiny $\alp0M0$}};
}
\draw[thick, decorate, decoration={brace,amplitude=10pt}] (4.1,-3.5) -- (.75,-3.5)  node[midway,yshift=-20] {{\large $\mathcal R$}};
\end{tikzpicture}
\end{center}
\caption{The structure of class $\mathbb M$ Markov chains.  In this case $j_0=0$ and, for simplicity, $\alp{m}{i-m}{\pm1}=0$. The chain is made up of a non-repeating portion, $\mathcal N$ (shown here as an aggregation of states), and a repeating portion, $\mathcal R$.  Within $\mathcal R$, each phase, $m$, corresponds to a ``row'' of states, and each level, $j$, corresponds to a ``column'' of states.  Transitions between levels in each phase of the repeating portion, $\mathcal R$, are \emph{skip-free}: all such transitions move only \emph{one step} to the ``left'' or ``right.''  Transitions between phases in each level of $\mathcal R$ are \emph{unidirectional}: all such transitions move ``downward.''  The thicker arrows denote \emph{sets} of transitions (transitions rates for these sets are omitted from the figure).}
\label{fig:chain}
\end{figure}

Markov chains in class $\mathbb M$  are examples of quasi-birth-death processes (QBDs), with increments and decrements in level corresponding to ``births'' and ``deaths,'' respectively.  We say that transitions in class $\mathbb M$ chains are \emph{skip-free in level}, in that the chain does not allow for the level to increase or decrease by more than 1 in a single transition.  We also say that transitions in class $\mathbb M$ chains are \emph{unidirectional in phase}, in that transitions may only be made to states having either the same phase or a higher phase in the repeating portion.  Note however that phases may be skipped:  for example, transitions from a state in phase $2$ to a state in phase $5$ may exist with nonzero rate.

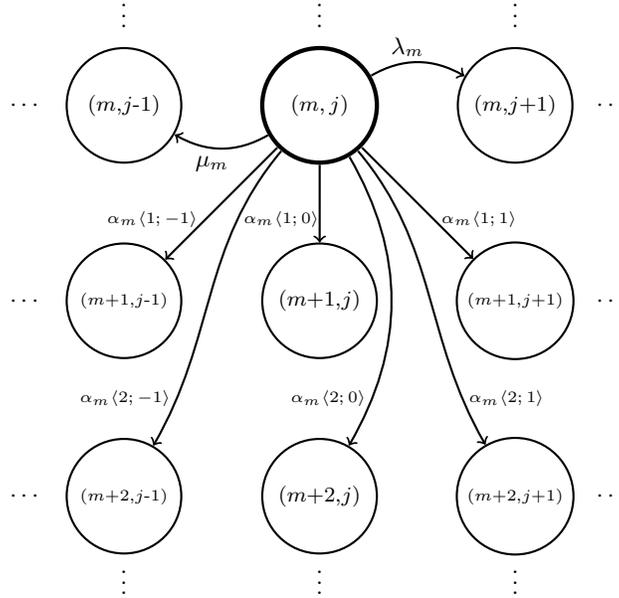
\begin{figure}
\begin{center}
\begin{tikzpicture}[scale=2.6]
%\draw [ultra thick,|->] (-.8,-3.25) -- (-.8,.5) -- (4,.5);
%\node[above] at (1.6,.5) {\large {level, $j$}};
%\node[left] at (-.8,-1.375) {\rotatebox{90}{{\large phase, $m$}}};
%\node[draw, circle, ultra thick, name=nrp, minimum size=.8in] at (-.25,-1.5) {{\large $\mathcal N$}};
%
\node[circle, thick, name=e1, minimum size=.2in] at (1,.5) {{$\vdots$}};
\node[circle, thick, name=e2, minimum size=.2in] at (2,.5) {{$\vdots$}};
\node[circle, thick, name=e3, minimum size=.2in] at (3,.5) {{$\vdots$}};
\node[circle, thick, name=1e, minimum size=.2in] at (.5,0) {{$\cdots$}};
\node[draw, circle, thick, name=11, minimum size=.6in] at (1,0) {{\scriptsize ($m$,$j$-1)}};
\node[draw, circle, ultra thick, name=12, minimum size=.6in] at (2,0) {{\scriptsize $(m,j)$}};
\node[draw, circle, thick, name=13, minimum size=.6in] at (3,0) {{\scriptsize ($m$,$j$+1)}};
\node[circle, thick, name=1d, minimum size=.2in] at (3.5,0) {{$\cdots$}};
\node[circle, thick, name=2e, minimum size=.2in] at (.5,-1) {{$\cdots$}};
\node[draw, circle, thick, name=21, minimum size=.6in] at (1,-1) {{\tiny ($m$+1,$j$-1)}};
\node[draw, circle, thick, name=22, minimum size=.6in] at (2,-1) {{\scriptsize ($m$+1,$j$)}};
\node[draw, circle, thick, name=23, minimum size=.6in] at (3,-1) {{\tiny ($m$+1,$j$+1)}};
\node[circle, thick, name=2d, minimum size=.2in] at (3.5,-1) {{$\cdots$}};
\node[circle, thick, name=3e, minimum size=.2in] at (.5,-2) {{$\cdots$}};
\node[draw, circle, thick, name=31, minimum size=.6in] at (1,-2) {{\tiny ($m$+2,$j$-1)}};
\node[draw, circle, thick, name=32, minimum size=.6in] at (2,-2) {{\scriptsize ($m$+2,$j$)}};
\node[draw, circle, thick, name=33, minimum size=.6in] at (3,-2) {{\tiny ($m$+2,$j$+1)}};
\node[circle, thick, name=3d, minimum size=.2in] at (3.5,-2) {{$\cdots$}};
\node[circle, thick, name=d1, minimum size=.2in] at (1,-2.4) {{$\vdots$}};
\node[circle, thick, name=d2, minimum size=.2in] at (2,-2.4) {{$\vdots$}};
\node[circle, thick, name=d3, minimum size=.2in] at (3,-2.4) {{$\vdots$}};
%
%\draw[ultra thick,->] (nrp) to[out=90,in=165] (11);
%\draw[ultra thick,->] (11) to[out=195,in=75] (nrp);
%\draw[ultra thick,->] (nrp) to[out=60,in=140] (21);
%\draw[ultra thick,->] (21) to[out=165,in=45] (nrp);
%\draw[ultra thick,<-] (nrp) to[out=-60,in=-140] (d1);
%\draw[ultra thick,<-] (d1) to[out=-165,in=-45] (nrp);
%\draw[ultra thick,<-] (nrp) to[out=-90,in=-165] (k1);
%\draw[ultra thick,<-] (k1) to[out=-195,in=-75] (nrp);
%
\draw[thick,->] (12) to[out=30,in=150] (13);
\draw[thick,->] (12) to[out=-150,in=-30] (11);
\draw[thick,->] (12) to (22);
\draw[thick,->] (12) to[out=-60,in=60] (32);
\draw[thick,->] (12) to[out=-50,in=120] (33);
\draw[thick,->] (12) to[out=-130,in=60] (31);
\draw[thick,->] (12) to[out=-45,in=135] (23);
\draw[thick,->] (12) to[out=-135,in=45] (21);
\node at (2.45,.3) {{\footnotesize $\lambda_m$}};
\node at (1.45,-.3) {{\footnotesize $\mu_m$}};
\node[left] at (2.04,-.58) {{\tiny $\alp m10$}};
\node[left] at (2.28,-1.5) {{\tiny $\alp m20$}};
\node[left] at (1.42,-.58) {{\tiny $\alp m1{-1}$}};
\node[right] at (2.58,-.58) {{\tiny $\alp m1{1}$}};
\node[left] at (1.28,-1.5) {{\tiny $\alp m2{-1}$}};
\node[right] at (2.72,-1.5) {{\tiny $\alp m2{1}$}};
%
%\foreach \x in {1,2,3} {
%\node[left] at (\x,-.5) {{\tiny $\alp010$}};
%\node at (\x+.45,.3) {{\footnotesize $\lambda_0$}};
%\node at (\x+.45,-.7) {{\footnotesize $\lambda_1$}};
%\node at (\x+.45,-2.7) {{\footnotesize $\lambda_M$}};
%\node at (\x+.45,-.3) {{\footnotesize $\mu_0$}};
%\node at (\x+.45,-1.3) {{\footnotesize $\mu_1$}};
%\node at (\x+.45,-3.3) {{\footnotesize $\mu_M$}};
%\node at (\x+.08,-2.19) {{\tiny $\alp1{M-1}0$}};
%\node at (\x+.53,-1.55) {{\tiny $\alp0M0$}};
%\draw[thick, decorate, decoration={brace,amplitude=10pt}] (4.1,-3.5) -- (.75,-3.5)  node[midway,yshift=-20] {{\large $\mathcal R$}};
\end{tikzpicture}
\end{center}
\caption{Another more detailed look at the transition structure of class $\mathbb M$ Markov chains.  For simplicity, only the set of transitions that are possible \emph{from} state $(m,j)$ (where $j\ge j_0+1$) to states in phases $m$, $m+1$, and $m+2$ are shown.  %Note that transitions may be one level to the left or right, while staying in phase $m$, or transitions may be one level to the left or right---or staying in the same level, $j$---\emph{while} transitioning to \emph{any higher numbered phase} $i>m$, although only phases $m+1$ and $m+2$ are shown.
Note that all transitions from $(m,j)$ are either to the left, to the right, or downward.  Furthermore, all transitions can decrease or increase the level by at most one.}
\label{fig:detail}
\end{figure}

Many common queueing systems arising in computing, service, and manufacturing systems can be modeled with CTMCs from class $\mathbb{M}$.  For such systems, one often needs to track both the number of jobs in the system and the state of the server(s), where each server may be in one of several states, e.g., working, fatigued, on vacation, etc.  When modeling a system with a class $\mathbb{M}$ Markov chain, we often use the level, $j$, of a state $(m,j)$ to track the number of jobs in the system, and we use the phase, $m$, to track the state of the server(s) and/or the arrival process.  For example, a change in phase could correspond to (i) a policy modification that results in admitting more customers, as captured by an increase in ``arrival rate'' from $\lambda_{m}$ to $\lambda_{i}$, where $\lambda_{i} > \lambda_{m}$ or (ii) a change in the state of the servers leading to an increase or decrease in the service rate from $\mu_m$ to $\mu_i$.  A few examples of systems that can be modeled by Class $\mathbb{M}$ Markov chains are presented in Section \ref{sec:examples}.

\subsection{The matrix-geometric approach}\label{sec:matrixgeo}

One way of studying the stationary distribution, $\pi$, of a class $\mathbb{M}$ Markov chain is to observe that it exhibits a matrix-geometric structure on $\mathcal{R}$. More specifically, if we let $\vec\pi_{j}$ represent the limiting probability of the states in $L_j$, that is, $\vec\pi_{j} \equiv (\pi_{(0,j)}, \pi_{(1,j)}, \ldots, \pi_{(M,j)})$, then for $j \geq j_{0}$
\begin{align*} \vec\pi_{j+1} &= \vec\pi_{j}\mathbf{R} \end{align*} where $\mathbf{R} \in \mathbb{R}^{(M+1) \times (M+1)}$ is referred to as the \emph{rate matrix} associated with the chain.  If we let the \emph{sojourn rate} of state $x$ be defined by \[\nu_{x}=\sum_{y\neq x}q(x,y),\] then we can describe the elements of $\mathbf R$ probabilistically as follows: the element, $R_{i,m}$, in row $i$, column $m$ of $\mathbf{R}$ can be interpreted as $\nu_{(i,j)}$ times the expected cumulative amount of time the chain spends in state $(m,j+1)$ before making a transition into a level strictly below $j+1$, given the chain starts in state $(i,j)$.  For most QBDs, one cannot derive an exact expression for each element of $\mathbf{R}$, but there are many ways to compute an approximation of $\textbf{R}$ numerically:  see for example \cite{latouche1993logarithmic,bright1995calculating}.  Readers interested in further details should consult the matrix-analytic texts of Neuts \cite{neuts1981matrix}, Latouche and Ramaswami \cite{mbook}, and He \cite{he2014fundamentals}.  Queueing textbooks of a broader scope that also discuss matrix-analytic methods include Asmussen \cite{asmussen2003applied} and Harchol-Balter \cite{harchol2013performance}.  Once $\mathbf{R}$---or good approximations for $\mathbf{R}$---have been found, then $\vec\pi_{j} = \vec\pi_{j_{0}}\textbf{R}^{j - j_0}$ for $j \geq j_{0}$, and so all remaining limiting probabilities, $\pi_{x}$, for $x \in \mathcal{N}$, can be found using the balance equations and the normalization constraint.

There are many examples of QBDs with a rate matrix, $\textbf{R}$, that can be computed exactly through a finite number of operations.  One class of QBDs having a closed-form rate matrix is presented in Ramaswami and Latouche \cite{ramaswami1986general}, with an extension to Markov chains of $GI/M/1$-type given in Liu and Zhao \cite{liu1996determination}.  Other classes of QBDs having explicitly computable rate matrices are considered in the work of van Leeuwaarden and Winands \cite{johan06} and van Leeuwaarden et al. \cite{johan09}, with both of these studies being much closer to our work, since most (but not all) of the types of Markov chains studied in \cite{johan06}, and all of the chains discussed in \cite{johan09} belong to class $\mathbb{M}$.  In \cite{johan06, johan09} combinatorial techniques are used to derive expressions for each element of $\textbf{R}$ that can be computed exactly after a finite number of operations, but their methods are not directly applicable to all class $\mathbb{M}$ Markov chains as they further assume that $\lambda_{m}$ and $\mu_{m}$ are the same for $0 \leq m \leq M-1$, and they also assume that for each $0 \leq m \leq M-1$, any transitions leaving phase $m$ must next enter phase $m+1$ (i.e., they assume \emph{phase} transitions are \emph{skip-free}---in addition to being unidirectional---within the repeating portion of the chain).

Even closer to our work is the work of Van Houdt and van Leeuwaarden \cite{johan11}, which presents an approach for the explicit calculation of the rate matrix for a broad class of QBDs including those in class $\mathbb M$. This approach involves solving higher order (scalar) polynomial equations, the solutions to which are expressed as infinite sums, which typically cannot be computed in closed-form.  However, \cite{johan11} also gives an approach for calculating closed-form rate matrices for a class of Markov chains called tree-like QBDs.  Tree-like QBDs neither contain nor are contained by class $\mathbb M$, although there is significant overlap between the two.  Transitions between phases (within a level) in tree-like QBDs form a \emph{directed tree}, while transitions between phases in class $\mathbb M$ Markov chains form a \emph{directed acyclic graph}.  Specifically, unlike class $\mathbb M$ chains, tree-like QBDs \emph{do not} allow for a pair of phases $i\neq k$ to both have transitions to the same phase $m$ (i.e., tree-like QBDs do not allow for both $\alp i{m-i}{\Delta}>0$ and $\alp k{m-k}{\Delta'}>0$ when $i\neq k$ and $\Delta,\Delta'\in\{-1,0,1\}$).

 \subsection{Our approach: Clearing Analysis on Phases (CAP)}

In this study we introduce the CAP (Clearing Analysis of Phases) method for evaluating the stationary distribution of  class $\mathbb{M}$ Markov chains.  This method proceeds iteratively among the \emph{phases}, by first expressing all $\pi_{(0,j)}$ probabilities, for $j \geq j_{0}$, in terms of $\pi_{x}$ probabilities for $x \in \mathcal{N}$.  Once each element $\pi_{(m,j)}$ for a fixed phase $m$, $j \geq j_{0}$ has been expressed in terms of $\{\pi_{x}\}_{x \in \mathcal{N}}$, we then do the same for all $\pi_{(m+1,j)}$ terms.  After each $\pi_{(M,j)}$ expression has been determined, we use the balance equations and normalization constraint to solve for the remaining $\{\pi_{x}\}_{x \in \mathcal{N}}$ probabilities.  CAP takes its name from the fact that, between two phase transitions, class $\mathbb{M}$ Markov chains behave like an M/M/1/clearing model, that is, each phase is likened to a birth-death process that experiences ``clearing'' or catastrophic events in accordance to an independent Poisson process.  In our model, these ``clearings'' corresponds to a change in phase.

One major advantage of the CAP method is that it avoids the task of finding the complete rate matrix, $\textbf{R}$, entirely, while yielding expressions for $\pi_{(m,j)}$ that only involve raising $M+1$ {\em scalars} to higher powers.  There exists one such scalar, $r_m$, for each phase, $m\in\{0,1,\ldots,M\}$. These scalars, referred to throughout as \emph{base terms}, are actually the diagonal elements of the rate matrix, $\textbf{R}$, i.e.,
\begin{align*}
r_{m} = R_{m,m},\qquad(0 \leq m \leq M)
\end{align*}
and the transition structure of class $\mathbb{M}$ Markov chains makes these elements much easier to compute than any of the other nonzero elements of $\textbf{R}$.  Furthermore, the structure of $\pi_{(m,j)}$ depends entirely on the number of base terms that agree with one another.  For example, when all nonzero base terms are distinct, one can show that
\begin{align}\label{eq:solution_form}
\pi_{(m,j)}=\sum_{k=0}^m c_{m,k}r_k^{j-j_0},
\end{align}
for $0 \leq m \leq M$, where the $\{c_{m,k}\}_{0 \le k\le m\le M}$ values are constants that do not vary with $j$, and can be computed exactly by solving a linear system of $O(M^2+|\mathcal N|)$ linear equations.

In the case where all base terms agree, we instead find that
\begin{align}\label{eq:solution_form2}
\pi_{(m,j)} = \sum_{k=0}^{m}c_{m,k}{j - (j_{0} + 1) + k \choose k}r_{0}^{j - j_{0}},
\end{align}
where again, the $c_{m,k}$ terms can be computed by solving a linear system.

In retrospect, it is of no surprise that $\pi_{(m,j)}$ can be expressed as a linear combination of scalars, each raised to the power of $j-j_0$, as in Equations \eqref{eq:solution_form} and \eqref{eq:solution_form2}:  $\mathbf{R}$ must be upper-triangular for class $\mathbb M$ chains. This follows by observing that $R_{i,m}$ is $\nu_{(i,j_0)}$ times the expected cumulative amount of time spent in state $(m, j_{0} + 1)$ before returning to  $L_{j_{0}}$, given initial state $(i,j_{0})$, and this value is 0 when $i>m$.  Since $\mathbf{R}$ is upper-triangular, its eigenvalues are simply its diagonal elements, which are also the diagonal elements of the Jordan normal form of $\mathbf{R}$---see e.g., Chapter 3 of Horn and Johnson \cite{horn2012matrix}---from which we know that $\pi_{(m,j)}$ can be expressed as a linear combination of scalars, each raised to the power of $j-j_0$.  Although in theory, our solution form could be recovered by first computing $\mathbf{R}$ and then numerically determining $\mathbf{R}$ in Jordan normal form, such a procedure is often inadvisable.  The structure of the Jordan normal form of a matrix can be extremely sensitive to small changes in one or more of its elements, particularly when some of its eigenvalues have algebraic multiplicity larger than one, as is the case for all of the models discussed in \cite{johan06, johan09}.  Fortunately, the CAP method can handle these cases as well with little additional difficulty.

The statement and proofs of this paper's main results are presented in Section \ref{sec:main}.  This proof relies on some results regarding M/M/1/clearing models; the proofs of these results are deferred to Section \ref{sec:clearing}.  In Section \ref{sec:scope} we briefly touch upon how the CAP method may be applied to chains beyond those in class $\mathbb M$.

\subsection{Recursive Renewal Reward, ETAQA, and other techniques}

We brief\-ly review existing techniques for solving QBDs beyond the matrix-geometric approach and comment on their connection to the CAP method.

Gandhi et al. \cite{gandhi2013exact,gandhi2014exact} use renewal theory to determine exact mean values and $z$-transforms of various metrics for a subclass of chains in $\mathbb M$ via the Recursive Renewal Reward (RRR) method.  The class of chains they study do not allow for ``diagonal'' transitions (i.e., $\alp m{i-m}{\pm1}=0$). Unlike our method, RRR \emph{cannot} be used to determine a formula for a chain's limiting probability distribution in finitely many operations. %\footnote{While repeated differentiation of a $z$-transform could, in principle, give limiting probabilities, such an approach does not give limiting probabilities explicitly: the limiting probability of arbitrary state would involve the computation of arbitrarily many derivatives in closed form.  Our method would only involve taking scalars to arbitrarily high powers and summing up finitely many terms.}
While there is overlapping intuition and flavor between CAP and RRR---both methods make use of renewal reward theory---CAP is \emph{not} an extension of RRR and does not rely on any of the results from \cite{gandhi2013exact,gandhi2014exact}.

The Efficient Technique for the Analysis of QBD-processes by Aggregation (ETAQA), first proposed by Ciardo and Simirni \cite{etaqa1}, combines ideas from matrix analytic and state aggregation approaches in order to compute various exact values (e.g., mean queue length) for a wide class of Markov chains.  By design, ETAQA yields the limiting probability of the states in the non-repeating portion, $\mathcal N$, along with the limiting probabilities of the states in the first level (or first few levels) of the repeating portion, $\mathcal R$.  The limiting probabilities of the remaining states (i.e., higher level states) are aggregated, which allows for the speedy computation of exact mean values and higher moments of various metrics of interest.  In particular, ETAQA involves solving a system of only $O(|\mathcal N|+M)$ linear equations.  Although originally applicable to a narrow class of chains (see \cite{etaqa1,etaqa2} for details), ETAQA can be generalized so as to be applicable to M/G/1-type, GI/M/1-type, and QBD Markov chains, including those in class $\mathbb M$ (see the work of Riska and Smirni \cite{etaqa4,etaqa5}).  Stathopoulos et al. \cite{etaqa3} show that ETAQA is also well suited for numerical computations; ETAQA can be adapted to avoid the numerical problems alluded to in Section \ref{sec:matrixgeo}.  Unlike the CAP method, ETAQA (like RRR) \emph{cannot} be used to determine a formula for a chain's limiting probability distribution (across all states) in finitely many operations.

For certain class $\mathbb M$ Markov chains, one can also manipulate generating functions to derive limiting probabilities, such as in the work of Levy and Yechiali \cite{levy76} and the work of Phung-Duc \cite{phung2014exact}, where this type of approach is used to solve multi-server vacation and setup models, respectively.  This approach is covered in greater generality in a technical report by Adan and Resing \cite{adan99}.  We note that although generating function approaches can yield solutions of a form similar to those found using the CAP method, the two approaches differ in methodology.

\section{Examples of class {$\mathbb M$} Markov chains}\label{sec:examples}

In this section we provide several examples of queueing systems which can be modeled by class $\mathbb M$ Markov chains.  In each example we will use the phase, $m\in\{0,1,\ldots,M\}$, to track the ``state'' of the server(s) and/or the arrival process, and the level, $j$, to track the number of jobs in the system.  Of course, there are many systems beyond those covered in this section that can be modeled by class $\mathbb M$ Markov chains.  For example, class $\mathbb M$ chains were recently used to model medical service systems in \cite{delasay2013modeling}, \cite{chan2014impact}, and \cite{selen2015snowball}.

\subsection{Single server in different power states}\label{sec:sleep}

\begin{figure}
\begin{center}
\begin{tikzpicture}[scale=2.3]
\node[above] at (1.87,.5) { {number of jobs, $j$}};
\draw[ultra thick,|->] (-.26,.5) -- (4,.5);
\node[left] at (-.4,0) {\textbf{off}};
\node[left] at (-.4,-1) {\textbf{sleep}};
\node[left] at (-.4,-2) {\textbf{on}};
\node[draw, circle, thick, name=10, minimum size=.6in] at (0,0) {{\small $(0,0)$}};
\node[draw, circle, thick, name=11, minimum size=.6in] at (1,0) {{\small $(0,1)$}};
\node[draw, circle, thick, name=12, minimum size=.6in] at (2,0) {{\small $(0,2)$}};
\node[draw, circle, thick, name=13, minimum size=.6in] at (3,0) {{\small $(0,3)$}};
\node[circle, thick, name=1d, minimum size=.6in] at (4,0) {{$\cdots$}};
\node[draw, circle, thick, name=20, minimum size=.6in] at (0,-1) {{\small $(1,0)$}};
\node[draw, circle, thick, name=21, minimum size=.6in] at (1,-1) {{\small $(1,1)$}};
\node[draw, circle, thick, name=22, minimum size=.6in] at (2,-1) {{\small $(1,2)$}};
\node[draw, circle, thick, name=23, minimum size=.6in] at (3,-1) {{\small $(1,3)$}};
\node[circle, thick, name=2d, minimum size=.6in] at (4,-1) {{$\cdots$}};
\node[draw, circle, thick, name=30, minimum size=.6in] at (0,-2) {{\small $(2,0)$}};
\node[draw, circle, thick, name=31, minimum size=.6in] at (1,-2) {{\small $(2,1)$}};
\node[draw, circle, thick, name=32, minimum size=.6in] at (2,-2) {{\small $(2,2)$}};
\node[draw, circle, thick, name=33, minimum size=.6in] at (3,-2) {{\small $(2,3)$}};
\node[circle, thick, name=3d, minimum size=.6in] at (4,-2) {{$\cdots$}};
\draw[thick,->] (10) to[out=30,in=150] (11);
\draw[thick,->] (11) to[out=30,in=150] (12);
\draw[thick,->] (12) to[out=30,in=150] (13);
\draw[thick,->] (13) to[out=30,in=150] (1d);
\draw[thick,->] (20) to[out=30,in=150] (21);
\draw[thick,->] (21) to[out=30,in=150] (22);
\draw[thick,->] (22) to[out=30,in=150] (23);
\draw[thick,->] (23) to[out=30,in=150] (2d);
\draw[thick,->] (30) to[out=30,in=150] (31);
\draw[thick,->] (31) to[out=30,in=150] (32);
\draw[thick,->] (32) to[out=30,in=150] (33);
\draw[thick,->] (33) to[out=30,in=150] (3d);
\draw[thick,->] (31) to[out=-150,in=-30] (30);
\draw[thick,->] (32) to[out=-150,in=-30] (31);
\draw[thick,->] (33) to[out=-150,in=-30] (32);
\draw[thick,->] (3d) to[out=-150,in=-30] (33);
\draw[thick,->] (21) to (31);
\draw[thick,->] (22) to (32);
\draw[thick,->] (23) to (33);
\draw[thick,->] (30) to (20);
\draw[thick,->] (20) to (10);
\draw[thick,->] (11) to[out=-60,in=60] (31);
\draw[thick,->] (12) to[out=-60,in=60] (32);
\draw[thick,->] (13) to[out=-60,in=60] (33);
\foreach \x in {0,1,2,3} {
\node at (\x+.45,.3) {{\footnotesize $\lambda$}};
\node at (\x+.45,-.7) {{\footnotesize $\lambda$}};
\node at (\x+.45,-1.7) {{\footnotesize $\lambda$}};
\node at (\x+.45,-2.3) {{\footnotesize $\mu$}};
}
\foreach \x in {1,2,3} {
\node[left] at (\x+.25,-.5) {{\footnotesize $\gamma$}};
\node[left] at (\x,-1.5) {{\footnotesize $\delta$}};
}
\node[left] at (0,-.5) {{\footnotesize $\beta$}};
\node[left] at (0,-1.5) {{\footnotesize $\beta$}};
\end{tikzpicture}
\end{center}
\caption{The Markov chain for a single server in different power states.  State $(m,j)$ indicates that the server is in state $m$ (0=off, 1=sleep, 2=on) with $j$ jobs in the system.}
\label{fig:sleep}
\end{figure}
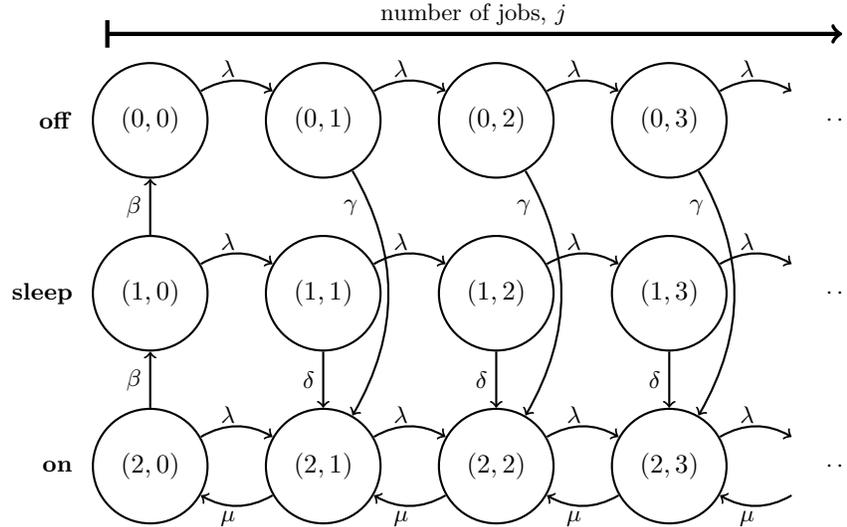

Consider a computer server that can be in one of three different power states: on, off, or sleep.  In the \textbf{on} state, the server is fully powered and jobs are processed at rate $\mu$.  In the \textbf{off} state, the server consumes no power, but jobs cannot be processed.  When the server is idle, it is desirable to switch to the off state in order to conserve power, however there is a long setup time, distributed $\ed(\gamma)$, needed to turn the server back on when work arrives.  Because of this setup time, it is common to switch to a state called the \textbf{sleep} state, where the server consumes less power than the \textbf{on} state, but where there is a shorter setup time, distributed $\ed(\delta)$, for turning the server on.  It is also common to purposefully impose a waiting period, distributed $\ed(\beta)$, in powering down a server (from on to sleep, and again from sleep to off) once it is idle, which is useful just in case new jobs arrive soon after the server becomes idle.  See \cite{gandhi2012autoscale} for more details.

Fig. \ref{fig:sleep} shows a Markov chain representing this setting.  This is a class $\mathbb M$ chain with $M+1=3$ phases: \textbf{off} ($m=0$), \textbf{sleep} ($m=1$), and \textbf{on} ($m=2$).  For this chain, $j_0=1$ and the non-repeating portion of the state space is $\mathcal N=\{(0,0),(1,0),(2,0)\}$, while $\lambda_0=\lambda_1=\lambda_2=\lambda$, $\mu_0=\mu_1=0$, $\mu_2=\mu$, $\alp020=\gamma$, and $\alp110=\delta>\gamma$ (all other $\alp m{m-i}\Delta$ transition rates are zero).

The system becomes much more interesting when there are multiple servers, where each can be in one of the above 3 states. In the case of 2 servers, there will be $6$ phases, corresponding to: (off,off), (off,sleep), (off,on), (sleep,sleep), (sleep,on), (on,on).  Note than in this case, phase transitions will include transitions with rates $2\gamma$, $\gamma+\delta$, and $2\delta$, as both servers may be attempting to turn on at the same time.
In general, a system with $a$ servers and $b$ server states will have $\binom{a+b-1}{a}$ phases.

\subsection{Server fatigue}

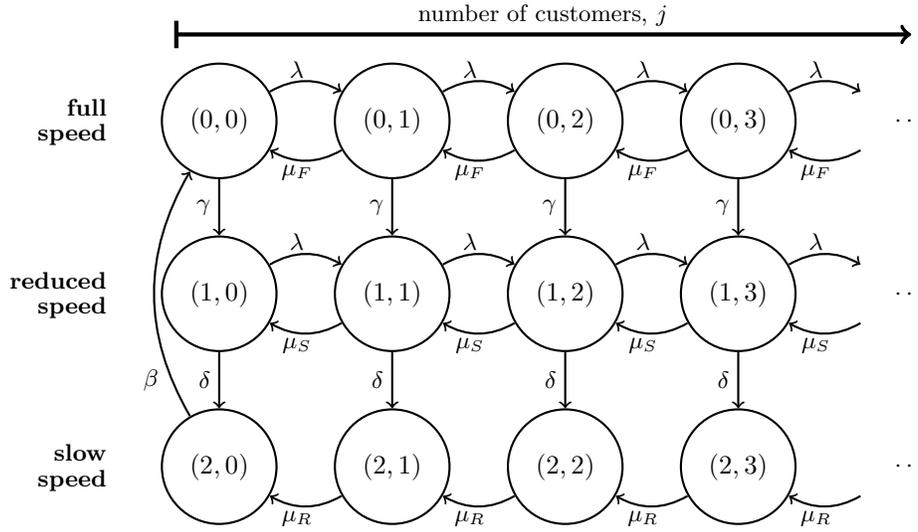
\begin{figure}
\begin{center}
\begin{tikzpicture}[scale=2.3]
\node[above] at (1.87,.5) { {number of customers, $j$}};
\draw[ultra thick,|->] (-.26,.5) -- (4,.5);
\node[left] at (-.6,0.08) {\textbf{full}};
\node[left] at (-.6,-0.08) {\textbf{speed}};
\node[left] at (-.6,-.92) {\textbf{reduced}};
\node[left] at (-.6,-1.08) {\textbf{speed}};
\node[left] at (-.6,-1.92) {\textbf{slow}};
\node[left] at (-.6,-2.08) {\textbf{speed}};
\node[draw, circle, thick, name=10, minimum size=.6in] at (0,0) {{\small $(0,0)$}};
\node[draw, circle, thick, name=11, minimum size=.6in] at (1,0) {{\small $(0,1)$}};
\node[draw, circle, thick, name=12, minimum size=.6in] at (2,0) {{\small $(0,2)$}};
\node[draw, circle, thick, name=13, minimum size=.6in] at (3,0) {{\small $(0,3)$}};
\node[circle, thick, name=1d, minimum size=.6in] at (4,0) {{$\cdots$}};
\node[draw, circle, thick, name=20, minimum size=.6in] at (0,-1) {{\small $(1,0)$}};
\node[draw, circle, thick, name=21, minimum size=.6in] at (1,-1) {{\small $(1,1)$}};
\node[draw, circle, thick, name=22, minimum size=.6in] at (2,-1) {{\small $(1,2)$}};
\node[draw, circle, thick, name=23, minimum size=.6in] at (3,-1) {{\small $(1,3)$}};
\node[circle, thick, name=2d, minimum size=.6in] at (4,-1) {{$\cdots$}};
\node[draw, circle, thick, name=30, minimum size=.6in] at (0,-2) {{\small $(2,0)$}};
\node[draw, circle, thick, name=31, minimum size=.6in] at (1,-2) {{\small $(2,1)$}};
\node[draw, circle, thick, name=32, minimum size=.6in] at (2,-2) {{\small $(2,2)$}};
\node[draw, circle, thick, name=33, minimum size=.6in] at (3,-2) {{\small $(2,3)$}};
\node[circle, thick, name=3d, minimum size=.6in] at (4,-2) {{$\cdots$}};
\draw[thick,->] (10) to[out=30,in=150] (11);
\draw[thick,->] (11) to[out=30,in=150] (12);
\draw[thick,->] (12) to[out=30,in=150] (13);
\draw[thick,->] (13) to[out=30,in=150] (1d);
\draw[thick,->] (20) to[out=30,in=150] (21);
\draw[thick,->] (21) to[out=30,in=150] (22);
\draw[thick,->] (22) to[out=30,in=150] (23);
\draw[thick,->] (23) to[out=30,in=150] (2d);
\draw[thick,->] (11) to[out=-150,in=-30] (10);
\draw[thick,->] (12) to[out=-150,in=-30] (11);
\draw[thick,->] (13) to[out=-150,in=-30] (12);
\draw[thick,->] (1d) to[out=-150,in=-30] (13);
\draw[thick,->] (21) to[out=-150,in=-30] (20);
\draw[thick,->] (22) to[out=-150,in=-30] (21);
\draw[thick,->] (23) to[out=-150,in=-30] (22);
\draw[thick,->] (2d) to[out=-150,in=-30] (23);
\draw[thick,->] (31) to[out=-150,in=-30] (30);
\draw[thick,->] (32) to[out=-150,in=-30] (31);
\draw[thick,->] (33) to[out=-150,in=-30] (32);
\draw[thick,->] (3d) to[out=-150,in=-30] (33);
\draw[thick,->] (10) to (20);
\draw[thick,->] (11) to (21);
\draw[thick,->] (12) to (22);
\draw[thick,->] (13) to (23);
\draw[thick,->] (20) to (30);
\draw[thick,->] (21) to (31);
\draw[thick,->] (22) to (32);
\draw[thick,->] (23) to (33);
\draw[thick,->] (30) to[out=120,in=-120] (10);

\foreach \x in {0,1,2,3} {
\node at (\x+.45,.3) {{\footnotesize $\lambda$}};
\node at (\x+.45,-.7) {{\footnotesize $\lambda$}};
\node at (\x+.45,-.3) {{\footnotesize $\mu_F$}};
\node at (\x+.45,-1.3) {{\footnotesize $\mu_S$}};
\node at (\x+.45,-2.3) {{\footnotesize $\mu_R$}};
\node[left] at (\x,-.5) {{\footnotesize $\gamma$}};
\node[left] at (\x,-1.5) {{\footnotesize $\delta$}};
}
\node[left] at (-.3,-1.5) {{\footnotesize $\beta$}};
\end{tikzpicture}
\end{center}
\caption{The Markov chain for a server susceptible to fatigue.  State $(m,j)$ indicates server state $m$ (0=full speed, 1=reduced speed, 2=slow speed) with $j$ customers in the system.}
\label{fig:fatigue}
\end{figure}

%When a server is freshly booted, it runs very quickly. However as the server runs, it is common that the server gets slower and slower. This happens because ``garbage'' accumulates on the server, so the server's available memory gets smaller and smaller over time. Fortunately, there are ``garbage collectors'' which clean out the server and make it fresh (another option is to reboot the server). Unfortunately, the garbage collecting process and the rebooting one can require that the server is free of jobs (idle) to work. Thus, if the server has gotten really slow, it may be necessary to follow this process: stop allowing new arrivals to the server; allow the server to clear all existing jobs; then run the garbage collector.

%A similar story is common in the case of human servers. The human server starts her shift full of energy and working quickly. As time passes and fatigue sets in, she gets slower and slower. At some point it makes sense to replace her with a fresh human server. However before we can do that, she needs to finish dealing with her queue of existing customers.

Consider a human server who starts her shift full of energy and works quickly (at rate $\mu_F$). As time passes and fatigue sets in, she gets slower and slower (first she slows down to a reduced rate $\mu_R$ and eventually to a very slow rate $\mu_S$, where $\mu_S < \mu_R < \mu_F$). At some point it makes sense to replace her with a fresh human server. However, before we can do that, she needs to finish serving her queue of existing customers, while no longer accepting further arrivals. We assume that the time it takes for the new replacement to start working is distributed $\ed(\beta)$.

Fig. \ref{fig:fatigue} shows a Markov chain representing this setting.  This is a class $\mathbb M$ chain with $M+1=3$ phases: \textbf{full speed} ($m=0$), \textbf{reduced speed} ($m=1$), and \textbf{slow speed} ($m=2$).  For this chain, $j_0=1$ and the non-repeating portion of the state space is $\mathcal N=\{(0,0),(1,0),(2,0)\}$, while $\lambda_0=\lambda_1=\lambda$, $\lambda_2=0$, $\mu_0=\mu_F$, $\mu_1=\mu_R<\mu_F$, $\mu_2=\mu_S<\mu_R$, $\alp010=\gamma$ and $\alp110=\delta$ (all other $\alp m{m-i}\Delta$ transition rates are zero).

Again, the system becomes much more interesting when there are multiple servers, where each can be in one of the above 3 states.

\subsection{Server with virus infections}

\begin{figure}
\begin{center}
\begin{tikzpicture}[scale=2.3]
\node[above] at (1.87,.5) { {number of jobs, $j$}};
\draw[ultra thick,|->] (-.26,.5) -- (4,.5);
\node[left] at (-.5,0) {\textbf{uninfected}};
\node[left] at (-.5,-0.92) {\textbf{undetected}};
\node[left] at (-.5,-1.08) {\textbf{infection}};
\node[left] at (-.5,-1.92) {\textbf{detected}};
\node[left] at (-.5,-2.08) {\textbf{infection}};
\node[draw, circle, thick, name=10, minimum size=.6in] at (0,0) {{\small $(0,0)$}};
\node[draw, circle, thick, name=11, minimum size=.6in] at (1,0) {{\small $(0,1)$}};
\node[draw, circle, thick, name=12, minimum size=.6in] at (2,0) {{\small $(0,2)$}};
\node[draw, circle, thick, name=13, minimum size=.6in] at (3,0) {{\small $(0,3)$}};
\node[circle, thick, name=1d, minimum size=.6in] at (4,0) {{$\cdots$}};
\node[draw, circle, thick, name=20, minimum size=.6in] at (0,-1) {{\small $(1,0)$}};
\node[draw, circle, thick, name=21, minimum size=.6in] at (1,-1) {{\small $(1,1)$}};
\node[draw, circle, thick, name=22, minimum size=.6in] at (2,-1) {{\small $(1,2)$}};
\node[draw, circle, thick, name=23, minimum size=.6in] at (3,-1) {{\small $(1,3)$}};
\node[circle, thick, name=2d, minimum size=.6in] at (4,-1) {{$\cdots$}};
\node[draw, circle, thick, name=30, minimum size=.6in] at (0,-2) {{\small $(2,0)$}};
\node[draw, circle, thick, name=31, minimum size=.6in] at (1,-2) {{\small $(2,1)$}};
\node[draw, circle, thick, name=32, minimum size=.6in] at (2,-2) {{\small $(2,2)$}};
\node[draw, circle, thick, name=33, minimum size=.6in] at (3,-2) {{\small $(2,3)$}};
\node[circle, thick, name=3d, minimum size=.6in] at (4,-2) {{$\cdots$}};
\draw[thick,->] (10) to[out=30,in=150] (11);
\draw[thick,->] (11) to[out=30,in=150] (12);
\draw[thick,->] (12) to[out=30,in=150] (13);
\draw[thick,->] (13) to[out=30,in=150] (1d);
\draw[thick,->] (20) to[out=30,in=150] (21);
\draw[thick,->] (21) to[out=30,in=150] (22);
\draw[thick,->] (22) to[out=30,in=150] (23);
\draw[thick,->] (23) to[out=30,in=150] (2d);
\draw[thick,->] (10) to (21);
\draw[thick,->] (11) to (22);
\draw[thick,->] (12) to (23);
\draw[thick,->] (13) to (2d);
\draw[thick,->] (11) to[out=-150,in=-30] (10);
\draw[thick,->] (12) to[out=-150,in=-30] (11);
\draw[thick,->] (13) to[out=-150,in=-30] (12);
\draw[thick,->] (1d) to[out=-150,in=-30] (13);
\draw[thick,->] (21) to[out=-150,in=-30] (20);
\draw[thick,->] (22) to[out=-150,in=-30] (21);
\draw[thick,->] (23) to[out=-150,in=-30] (22);
\draw[thick,->] (2d) to[out=-150,in=-30] (23);
\draw[thick,->] (31) to[out=-150,in=-30] (30);
\draw[thick,->] (32) to[out=-150,in=-30] (31);
\draw[thick,->] (33) to[out=-150,in=-30] (32);
\draw[thick,->] (3d) to[out=-150,in=-30] (33);
\draw[thick,->] (20) to (30);
\draw[thick,->] (21) to (31);
\draw[thick,->] (22) to (32);
\draw[thick,->] (23) to (33);
\draw[thick,->] (30) to[out=120,in=-120] (10);

\foreach \x in {0,1,2,3} {
\node at (\x+.45,.3) {{\footnotesize $\lambda_N$}};
\node at (\x+.45,-.7) {{\footnotesize $\lambda$}};
\node at (\x+.45,-.3) {{\footnotesize $\mu$}};
\node at (\x+.45,-1.3) {{\footnotesize $\mu_I$}};
\node at (\x+.45,-2.3) {{\footnotesize $\mu_I$}};
\node at (\x+.66,-.5) {{\footnotesize $\lambda_V$}};
\node[left] at (\x,-1.5) {{\footnotesize $\gamma$}};
}
\node[left] at (-.3,-1.5) {{\footnotesize $\beta$}};
\end{tikzpicture}
\end{center}
\caption{The Markov chain for a server vulnerable to viruses.  State $(m,j)$ indicates server state $m$ (0=uninfected, 1=undetected infection, 2=detected infection) with $j$ jobs in the system.}
\label{fig:detection}
\end{figure}
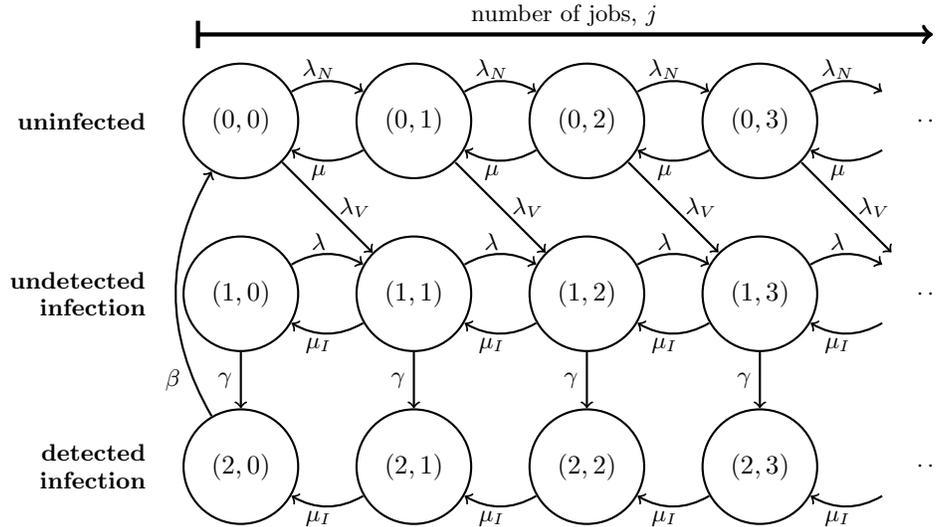

Imagine a computer server that is vulnerable to viruses.  We present a stylized model where normally, the server is \textbf{uninfected} and receives jobs with rate $\lambda$ and processes them with rate $\mu$. While most jobs are normal (i.e., not virus carriers), arriving at rate $\lambda_N$, every once in a while, one of the arriving jobs brings with it a virus, with rate $\lambda_V=\lambda-\lambda_N$.  The virus causes the server to become \textbf{infected}, reducing the server's service rate from $\mu$ to $\mu_I$. It takes a duration of time distributed $\ed(\gamma)$  for the server to detect that it is infected.  Once the infection is \textbf{detected}, the server stops accepting new jobs, and once all remaining jobs are processed, the server is able to use antivirus software to remove the virus in a duration of time distributed $\ed(\beta)$.  Once the virus is removed, the server is again uninfected and will resume accepting jobs, processing them at a restored service rate of $\mu$.  We model a single server as being in one of 3 states, each of which will make up a \emph{phase} of our Markov chain: \textbf{uninfected} ($m=0$), \textbf{undetected infection} ($m=1$), and \textbf{detected infection} ($m=2$).

Fig. \ref{fig:detection} shows a class $\mathbb M$ Markov that represents this setting.  %Due to the ``diagonal'' transitions from state $(1,j)'$ to state $(1,j+1)$, each occurring with rate $\lambda_V$, it may appear that this is not a class $\mathbb M$ Markov chain.  However, as depicted in Fig. \ref{fig:detection2}, \emph{renaming} states $(1,j)'$ to $(1,j+1)$, for all $j\ge0$, will result in a chain that is stochastically identical to a class $\mathbb M$ chain.
For this chain, $M=2$, $j_0=1$, $\mathcal N=\{(0,0),(1,0),(2,0)\}$, $\lambda_0=\lambda_N$, $\lambda_1=\lambda=\lambda_N+\lambda_V$, $\lambda_2=0$, $\mu_0=\mu$, $\mu_1=\mu_2=\mu_I$, $\alp011=\lambda_V$, and $\alp110=0$ (all other $\alp m{m-i}\Delta$ transition rates are zero).  %We note that this representation has the drawback that state $(1,j)$ now corresponds to a system with $j-1$ rather than $j$ jobs.  States $(2,j)$ and $(3,j)$ still correspond to a system with exactly $j$ jobs.

\section{Results}\label{sec:main}

In this section we first present a key theorem from the literature that enables the CAP method (Theorem \ref{thm:main}).  We then introduce some preliminary notation, and an original result, Theorem \ref{thm:eij}.  Finally, we present the main results of our paper, Theorems \ref{THM:RESULT}, \ref{THM:RESULT2}, and \ref{THM:RESULT3}, the proofs of which will depend on both Theorems \ref{thm:main} and \ref{thm:eij}.

\subsection{A key idea}

Consider an ergodic CTMC with state space, $S$, and consider a nonempty proper subset, $A\subsetneq S$, with states $x,z\in A$.  The CAP method involves calculating quantities of the form
\begin{align*}
\rew zx&\equiv\ep\left[\substack{
\mbox{cumulative time spent in state $x$ until next}\\
\mbox{transition leaving set $A$, given initial state $z$}
}\right]
\end{align*}
in order to determine the limiting probabilities of the Markov chain of interest. Theorem \ref{thm:main} (from Theorem 5.5.1 of \cite{mbook}) gives an expression for the limiting probabilities of the Markov chain in terms of the quantities $\rew zx$.
\begin{theorem}\label{thm:main}
Suppose $A \subsetneq S$.  Then for each $x \in A$, the limiting probability of being in state $x$, $\pi_x$, can be expressed as
\[\pi_x=\sum_{y\in A^c}\sum_{z\in A}\pi_y q(y,z)\rew zx,\] where $q(y,z)$ is the transition rate from state $y$ to state $z$ and $A^c\equiv S\backslash A$.
\end{theorem}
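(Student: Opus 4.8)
The plan is to establish the identity by a regenerative (renewal--reward) argument, exploiting the fact that the time the chain spends in a state $x \in A$ can be apportioned among the successive excursions the chain makes into $A$, each such excursion being ``triggered'' by a jump from $A^c$ into $A$. Since the chain is ergodic and $A^c$ is nonempty, I would fix a reference state $x_0 \in A^c$ and use the successive visits to $x_0$ to partition the sample path into i.i.d.\ regeneration cycles; let $\tau$ denote the length of one cycle (the return time to $x_0$). Standard regenerative--process theory then gives, for every $x$,
\[
\pi_x = \frac{\ep_{x_0}[R_x]}{\ep_{x_0}[\tau]},
\]
where $R_x$ is the total time spent in state $x$ during one cycle. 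The goal is therefore to evaluate $\ep_{x_0}[R_x]$ for $x \in A$.

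The key structural observation is that, because $x_0 \in A^c$, every excursion the chain makes into $A$ during a cycle both begins and ends strictly inside that cycle: an excursion starts with a jump from some $y \in A^c$ to some $z \in A$ and ends at the next jump back into $A^c$, and since the chain cannot occupy $x_0$ while inside $A$, no excursion is ever cut off by the end of the cycle. Writing $N_{y,z}$ for the number of $y \to z$ entrance jumps during a cycle and $\xi_{y,z,k}$ for the time spent in $x$ during the excursion following the $k$-th such jump, I would decompose
\[
R_x = \sum_{y \in A^c}\sum_{z \in A}\sum_{k=1}^{N_{y,z}} \xi_{y,z,k}.
\]
By the strong Markov property applied at each entrance epoch, $\ep[\xi_{y,z,k}\mid \text{the $k$-th $y\to z$ entrance occurs}] = \rew zx$, independently of the pre-entrance history; in particular $\xi_{y,z,k}$ is independent of the event $\{N_{y,z} \ge k\}$, which is measurable with respect to that history. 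A Wald-type identity (summing $\ep[\xi_{y,z,k}\mathbf 1\{N_{y,z}\ge k\}] = \rew zx\cdot \pr(N_{y,z}\ge k)$ over $k$, with the interchange of sum and expectation justified by nonnegativity) then yields
\[
\ep_{x_0}[R_x] = \sum_{y\in A^c}\sum_{z\in A}\ep_{x_0}[N_{y,z}]\,\rew zx.
\]

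To finish, I would invoke the elementary renewal theorem for the counting process of $y \to z$ jumps: the long-run rate of such jumps in an ergodic CTMC is $\pi_y q(y,z)$, so that $\ep_{x_0}[N_{y,z}]/\ep_{x_0}[\tau] = \pi_y q(y,z)$. Dividing the previous display by $\ep_{x_0}[\tau]$ and substituting gives
\[
\pi_x = \frac{\ep_{x_0}[R_x]}{\ep_{x_0}[\tau]} = \sum_{y\in A^c}\sum_{z\in A}\pi_y q(y,z)\,\rew zx,
\]
as claimed. I expect the middle step to be the main obstacle: making the reward decomposition rigorous requires the strong Markov property at the (random) entrance epochs together with the Wald identity to handle the random number of excursions per cycle, and it is precisely the choice $x_0 \in A^c$ that guarantees no excursion straddles a cycle boundary, which is what lets the per-excursion conditional expectation equal $\rew zx$ exactly.
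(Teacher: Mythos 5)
Your proof is correct, but it is not the paper's proof: the paper offers no argument of its own for this theorem, deferring entirely to Theorem 5.5.1 of Latouche and Ramaswami \cite{mbook}. What you have done is turn the heuristic the paper states immediately \emph{after} the theorem---$\pi_x$ as a weighted average of per-excursion occupation times $\rew zx$, weighted by the rates $\pi_y q(y,z)$ at which excursions into $A$ begin at $z$---into a rigorous, self-contained regenerative argument. Your three ingredients are all sound: (i) the renewal-reward identity $\pi_x=\ep_{x_0}[R_x]/\ep_{x_0}[\tau]$ for cycles delimited by entrances into a fixed $x_0\in A^c$, with $\ep_{x_0}[\tau]<\infty$ by ergodicity; (ii) the observation that taking $x_0\in A^c$ prevents any excursion into $A$ from straddling a cycle boundary, so that $R_x$ splits exactly into per-excursion contributions, after which the strong Markov property at the entrance epochs and a Wald/Tonelli argument give $\ep_{x_0}[R_x]=\sum_{y\in A^c}\sum_{z\in A}\ep_{x_0}[N_{y,z}]\rew zx$; and (iii) the identification $\ep_{x_0}[N_{y,z}]/\ep_{x_0}[\tau]=\pi_y q(y,z)$, which is itself a renewal-reward computation for the jump-counting process (the expected number of sojourns in $y$ per cycle is $\nu_y\pi_y\ep_{x_0}[\tau]$, and each sojourn ends with a jump to $z$ with probability $q(y,z)/\nu_y$). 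Two small points deserve explicit treatment in a polished write-up: the interchange of the possibly infinite double sum over $(y,z)$ with the expectation also needs Tonelli, not only the sum over $k$; and one should note that $\rew zx<\infty$, e.g.\ because it is dominated by the expected time spent in $x$ before hitting $x_0$, which is finite in an ergodic chain. In short, your approach buys a first-principles proof where the paper has only a citation, at the cost of exactly the measure-theoretic bookkeeping you flagged as the main obstacle.
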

\begin{proof}
See Theorem 5.5.1 of \cite{mbook}.
\end{proof}
Intuitively, we are expressing the long run fraction of time that we reside in state $x$, $\pi_x$, as a weighted average of the cumulative time spent in state $x$ during uninterrupted visits to states in $A$, $\rew zx$, conditioned on the choice of state, $z\in A$, by which we enter $A$.  The weights in this average represent the rate at which visits to $A$ via $z$ occur, which involves conditioning on the states $y\in A^c$ by which one may transition to $z\in A$.  We illustrate $S$, $A$, $y$, $z$, and $x$ in Fig. \ref{fig:xyz}.

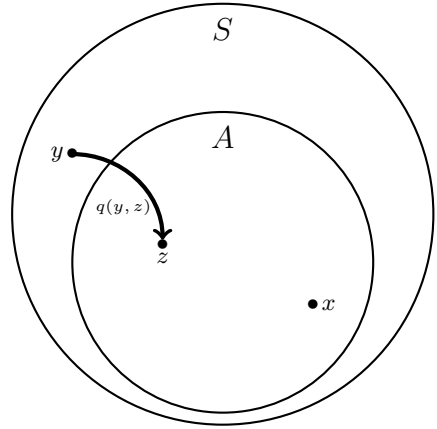
\begin{figure}
\begin{center}
\begin{tikzpicture}[scale=0.4]
\node[below] at (0,6.8) {{\large $S$}};
\node[below] at (0,3.2) {{\large $A$}};
\draw[thick] (0,0) circle (7);
\node at (-5,2) {\textbullet};
\node[left] at (-5,2) {$y$};
\node at (-2,-1) {\textbullet};
\node[below] at (-2,-1) {$z$};
\draw[->,ultra thick] (-5,2) to[out=0,in=90] (-2,-.85);
\node at (-3.3,.25) {{\tiny $q(y,z)$}};
\draw[thick] (0,-1.6) circle (5);
\node at (3,-3) {\textbullet};
\node[right] at (3,-3) {$x$};
\end{tikzpicture}
\end{center}
\caption{For any $x\in A$, Theorem \ref{thm:main} gives $\pi_x$ as a linear combination of quantities $\rew zx$ by conditioning on the states $y\in A^c$, by which one may transition to states $z\in A$.  This figure shows one such $(y,z)$ pair.}
\label{fig:xyz}
\end{figure}

As an example, consider the simple case where $A=\{x\}$.  In this case, Theorem \ref{thm:main} yields
\begin{align*}
\pi_x&=\sum_{y\in A^c}\sum_{z\in A}\pi_y q(y,z)\rew zx=\sum_{y\neq x}\pi_y q(y,x)\mathbb E_x\left[T_x^{\{x\}}\right]=\frac{\sum_{y\neq x}\pi_y q(y,x)}{\sum_{y\neq x} q(x,y)},
\end{align*}
and so \[\pi_x\sum_{y\neq x} q(x,y)=\sum_{y\neq x}\pi_y q(y,x),\] which is simply the balance equation associated with state $x$.

Theorem \ref{thm:main} is used in \cite{mbook} to establish the matrix-geometric structure of the stationary distribution of QBD chains.  The same argument can be used to establish the matrix-geometric structure satisfied by the stationary distribution, $\pi$, of a class $\mathbb{M}$ Markov chain on $\mathcal{R}$.  Fix a level $j \geq j_{0}$, and define $A = \bigcup_{\ell \geq j+1}L_{\ell}$.  Then for each state $(m,j+1) \in L_{j+1}$, we have
\begin{align*}
\pi_{(m,j+1)} &= \sum_{i=0}^{M}\sum_{k=0}^{M}\pi_{(i,j)}q((i,j), (k,j+1))\mathbb{E}_{(k,j+1)}\left[T_{(m,j+1)}^{A}\right] \\
&= \sum_{i=0}^{M}\nu_{(i,j)}\pi_{(i,j)}\sum_{k=0}^{M}\left(\frac{q((i,j), (k,j+1))}{\nu_{(i,j)}}\right)\mathbb{E}_{(k,j+1)}\left[T_{(m,j+1)}^{A}\right] \\ &= \sum_{i=0}^{M}\pi_{(i,j)}R_{i,m},
\end{align*} thus proving that $\vec\pi_{j+1} = \vec\pi_{j}\textbf{R}$, since $R_{i,m}$ is $\nu_{(i,j)}$ times the expected amount of time the chain spends in state $(m,j+1)$ before returning to $L_{j}$, given it starts in state $(i,j)$, and $\textbf{R}$ is the rate matrix whose $(i,m)$th element is given by $R_{i,m}$.%, since the term
%\begin{align*}
%\sum_{k=0}^{M}\left(\frac{q((i,j), (k,j+1))}{q((i,j))}\right)\mathbb{E}_{(k,j+1)}\left[T_{(m,j+1)}^{A}\right]
%\end{align*} is the expected amount of time the chain spends in state $(m,j+1)$ before making a transition into $L_{j}$, given the chain begins in state $(i,j)$.

We will soon see that the CAP method consists of applying Theorem \ref{thm:main} by choosing the set $A$ in a different manner, while simultaneously observing that the resulting expected values of the form $\rew zx$ can be reinterpreted as tractable expected values associated with an M/M/1/clearing model.

%In the following section, we apply this technique to finding the limiting probabilities of the M/M/1/setup Markov chain.

\subsection{Preliminaries}\label{sec:mainresult}

Our main results, and their proofs, will rely on the following notation:
\begin{itemize}
\item $P_m\equiv\{(m,j_0+1),(m,j_0+2),(m,j_0+3),\ldots\}$ is the set of states in phase $m$ with level $j\ge j_0+1$ (i.e., the set of states in phase $m$ of $\mathcal R$ excluding state $(m,j_0)$).
%\item  $\alpha_m\equiv\sum_{i=m+1}^M\alpha_{m,i}$, which can be viewed as the ``clearing rate'' in phase $m$
\item  $\rho_m\equiv\lambda_m/\mu_m$.
\item $\phi_m(\cdot)$ is the Laplace Transform of the busy period (time to first reach state 0, given that one starts in state 1) of an M/M/1 Markov chain with arrival rate $\lambda_m$ and departure rate $\mu_m$:
\[\phi_m(s)\equiv\frac{s+\lambda_m+\mu_m-\sqrt{(s+\lambda_m+\mu_m)^2-4\lambda_m\mu_m}}{2\lambda_m}.\]
\item  The \emph{bases} of our main theorem, $r_m$, are given by
\begin{align}
r_m&\equiv
\begin{cases}
\rho_m\phi_m(\alpha_m)&\mbox{if }\mu_m>0\\
\displaystyle{\frac{\lambda_m}{\lambda_m+\alpha_m}}&\mbox{if }\mu_m=0,\label{eq:rm}
\end{cases}
\end{align}
recalling that
\[\alpha_{m} \equiv \sum_{i={m+1}}^{M}\sum_{\Delta=-1}^1 \alp m{i-m}\Delta.\]
\item For convenience, we define the following quantity, which will appear frequently in our analysis:
\begin{align}\label{eq:omega}
\Omega_m&\equiv \frac{r_{m}}{\lambda_{m}(1 - r_{m}\phi_{m}(\alpha_{m}))}.
\end{align}
\end{itemize}
As a consequence of the ergodicity assumption on class $\mathbb M$ Markov chains, we have
\begin{itemize}
\item for any phase $m$, $\lambda_m\ge \mu_m$ implies $\alpha_{m} > 0$,
\item and for any phase $m$, $\lambda_m=0$ implies that there exists a phase, $i<m$, and $\Delta\in\{-1,0,1\}$ such that $\alp i{m-i}\Delta>0$.
\end{itemize}
We also make the following observations:
\begin{itemize}
\item $r_m<1$ for all phases, $m\in\{0,1,\ldots,M\}$.
\item $r_m=0$ if and only if $\lambda_m=0$.
\item $r_m=\rho_m$ whenever $\alpha_m=0$ (e.g., when $m=M$, as $\alpha_M=0$).  This is because $\alpha_m=0$ implies that $\mu_m>\lambda_m$ by the ergodicity assumption, which yields $\phi_m(0)=1$.
\item $\phi_m(s)=0$ for all $s$ whenever $\lambda_m>\mu_m=0$, which follows from the expression for $\phi_m(s)$.  Alternatively, this follows by observing that the busy period of a degenerate (non-ergodic) M/M/1 Markov chain with arrival rate $\lambda_m=0$ is infinite.
\end{itemize}
%\item  $\rew \ell j_{(n)}$ is understood as $\rew \ell j$ for the M/M/1/clearing model with arrival, departure, and clearing rates $\lambda_n$, $\mu_n$, and $\alpha_n$, respectively, with $A=\{1,2,3,\ldots\}$.

We have the following fundamental result on class $\mathbb M$ Markov chains, which together with Theorem \ref{thm:main}, will enable us to prove the main results of our paper (Theorems \ref{THM:RESULT}, \ref{THM:RESULT2}, and \ref{THM:RESULT3}).

\begin{theorem}\label{thm:eij}
For any Markov class $\mathbb M$ Markov chain, if $\lambda_m,\mu_m>0$ and $\ell,j\ge j_0+1$, we have
\begin{align}
\label{eq:thm}
\rep \ell jm&=\begin{cases}
\Omega_m r_m^{j-\ell}\left(1-(r_m\phi_m(\alpha_m))^{\ell-j_0}\right)&\mbox{if }\ell\le j\\
\Omega_{m}\phi_{m}(\alpha_{m})^{\ell - j}\left(1 - (r_{m}\phi_{m}(\alpha_{m}))^{j - j_{0}}\right)&\mbox{if }\ell\ge j.
\end{cases}
\end{align}
with $r_m$ as given in \eqref{eq:rm} and $\Omega_m$ as given in \eqref{eq:omega}.
\end{theorem}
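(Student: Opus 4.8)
The plan is to read $\rep \ell jm$ as an occupation-time (Green's function) quantity for the process obtained by watching the chain only while it stays in $P_m$, and then to evaluate that quantity by a first-passage decomposition. First I would note that, started at any $(m,\ell)$ with $\ell\ge j_0+1$, the chain remains in $P_m$ exactly until either a down-step from the floor state $(m,j_0+1)$ carries it to $(m,j_0)$, or one of the phase-changing transitions fires; the latter occur at total rate $\alpha_m$ and always leave phase $m$. Because $\lambda_m$, $\mu_m$, and the aggregate exit rate $\alpha_m$ are level-independent, the restricted dynamics are exactly those of a birth-death process on levels $\{j_0+1,j_0+2,\dots\}$ with up-rate $\lambda_m$, down-rate $\mu_m$, an absorbing floor just below level $j_0+1$, and an independent clearing clock of rate $\alpha_m$ that terminates the sojourn when it rings; this is the promised M/M/1/clearing model. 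The strong Markov property then gives the factorization
\[
\rep \ell jm = h(\ell,j)\,G(j),
\]
where $h(\ell,j)$ is the probability of reaching level $j$ from level $\ell$ before leaving $P_m$, and $G(j)=\ep_{(m,j)}[T_{(m,j)}^{P_m}]$ is the expected occupation time at the target started from the target.

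The core of the argument is a collection of one-step first-passage probabilities. If $B$ denotes the busy period of the $\lambda_m$/$\mu_m$ M/M/1 queue, then the probability of completing a downward step before the clearing clock rings is $\ep[e^{-\alpha_m B}]=\phi_m(\alpha_m)$, and this value is \emph{boundary-free}, since a downward first passage only visits levels at or above its target. The corresponding \emph{unbounded} up-step probability is the smaller root of $\mu_m x^2-(\lambda_m+\mu_m+\alpha_m)x+\lambda_m=0$, which I would verify equals $r_m=\rho_m\phi_m(\alpha_m)$, thereby identifying the base term as a first-passage probability. The main obstacle is that an upward excursion may wander down toward the floor, so the \emph{true} up-step probability $a_k$ from the level $k$ steps above the floor genuinely depends on $k$. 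Conditioning on the first jump gives the continued-fraction recursion
\[
a_k=\frac{\lambda_m}{(\lambda_m+\mu_m+\alpha_m)-\mu_m a_{k-1}},\qquad a_0=0,
\]
a M\"obius map with fixed points $r_m$ and $1/\phi_m(\alpha_m)$ and multiplier $r_m\phi_m(\alpha_m)$; solving it yields
\[
a_k=r_m\,\frac{1-(r_m\phi_m(\alpha_m))^{k}}{1-(r_m\phi_m(\alpha_m))^{k+1}}.
\]
These are precisely the boundary corrections that surface as the factors $1-(r_m\phi_m(\alpha_m))^{(\cdot)}$ in the theorem.

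With these pieces the two remaining computations assemble cleanly. Writing $k=j-j_0$, the probability of returning to the target before leaving $P_m$ is $q=(\lambda_m\phi_m(\alpha_m)+\mu_m a_{k-1})/(\lambda_m+\mu_m+\alpha_m)$; since the number of visits to the target is geometric with this return probability and each visit has mean length $1/(\lambda_m+\mu_m+\alpha_m)$, we get $G(j)=[(\lambda_m+\mu_m+\alpha_m)(1-q)]^{-1}$. Using $\lambda_m\phi_m(\alpha_m)=\mu_m r_m$ together with the defining quadratic to simplify the denominator, this collapses to $G(j)=\Omega_m\bigl(1-(r_m\phi_m(\alpha_m))^{\,j-j_0}\bigr)$. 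For the reaching probability, when $\ell\ge j$ the passage is downward and boundary-free, so $h(\ell,j)=\phi_m(\alpha_m)^{\ell-j}$; when $\ell\le j$ it is an upward passage equal to the telescoping product
\[
h(\ell,j)=\prod_{k=\ell-j_0}^{\,j-j_0-1}a_k=r_m^{\,j-\ell}\,\frac{1-(r_m\phi_m(\alpha_m))^{\ell-j_0}}{1-(r_m\phi_m(\alpha_m))^{j-j_0}}.
\]
Multiplying $h(\ell,j)$ by $G(j)$ --- the telescoping denominator cancelling the matching factor of $G(j)$ in the upward case --- reproduces the two stated expressions. I would close by checking that both cases agree at $\ell=j$ (each giving $\Omega_m(1-(r_m\phi_m(\alpha_m))^{j-j_0})$) and by noting where the hypotheses $\lambda_m,\mu_m>0$ enter, so that $B$ is a genuine busy period and $\rho_m$ is finite.
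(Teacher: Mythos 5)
Your proof is correct, and although it shares the paper's starting point---recasting the within-phase dynamics as an M/M/1/clearing model and using the strong Markov property to factor $\rep \ell jm$ into a hitting probability times the expected occupation time at the target started from the target---its two computational engines are genuinely different from the paper's. For the hitting probabilities, the paper's Lemma \ref{lem:pab} introduces the stopping times $\tau_0,\tau_j$, evaluates $\ep[e^{-\alpha\tau_0}]=\phi(\alpha)^{\ell}$ and $\ep[e^{-\alpha\tau_j}]=(\rho\phi(\alpha))^{j-\ell}$, and solves a two-equation linear system for the two-sided exit probabilities; you instead set up the first-step recursion $a_k=\lambda_m/\bigl((\lambda_m+\mu_m+\alpha_m)-\mu_m a_{k-1}\bigr)$ for the floor-corrected up-step probabilities, solve it as a M\"obius iteration through its fixed points $r_m$ and $1/\phi_m(\alpha_m)$, and telescope; your closed form for $a_k$ is exactly the paper's $p_{k\to k+1}$ (note $\rho_m\phi_m(\alpha_m)^2=r_m\phi_m(\alpha_m)$), so the two computations agree. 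For the anchor quantity, the paper's Lemma \ref{lem:eij} computes $\rew 1j$ by renewal reward using the stationary distribution of the clearing model (Lemma \ref{lem:limprob}, imported from the literature), whereas you compute the Green's function $G(j)$ directly from a geometric-number-of-returns argument; your collapse of $G(j)$ to $\Omega_m\bigl(1-(r_m\phi_m(\alpha_m))^{j-j_0}\bigr)$ checks out using $\lambda_m\phi_m(\alpha_m)=\mu_m r_m$ together with the busy-period identity $\lambda_m+\mu_m+\alpha_m=\lambda_m\phi_m(\alpha_m)+\mu_m/\phi_m(\alpha_m)$. What your route buys is self-containedness and elementarity: it needs neither the clearing model's stationary law nor the renewal reward theorem, and it gives $r_m$ a transparent probabilistic meaning as the floor-free up-step probability, i.e., the smaller root of $\mu_m x^2-(\lambda_m+\mu_m+\alpha_m)x+\lambda_m=0$. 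What the paper's route buys is reuse of known results (the clearing model's stationary distribution, and exit-probability arguments analogous to classical ones for Brownian motion) and transform identities that serve the rest of the paper, at the cost of invoking more machinery; it also sidesteps the fixed-point algebra your recursion requires.
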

\begin{proof}
The proof of this result is deferred to Section \ref{sec:clearing}, which is entirely focused on proving this result via clearing model analysis.
\end{proof}

The remainder of this section will present our main results, giving the stationary distribution of class $\mathbb M$ chains via the CAP method in three different cases.  In Section \ref{sec:result1}, we consider the case where all bases, $r_m$, are distinct whenever they are nonzero.  Distinct bases arise in many models where there is no structure connecting the transition rates associated with each phase.  For example, the class $\mathbb M$ Markov chain representing the ``server in different power states'' model presented in Section \ref{sec:sleep} has distinct bases.  In Section \ref{sec:result2} we consider the case where all bases are the same (i.e., $r_0=r_1=\cdots=r_m$), while requiring that $\lambda_m,\mu_m>0$, for simplicity.  We study this setting because it is the simplest case featuring repeated nonzero bases.  Finally, in Section \ref{sec:result3} we proceed to the case where all bases except for $r_M$ are the same (i.e., $r_0=r_1=\cdots=r_{M-1}\neq r_{M}$).  This structure, which is studied in \cite{johan06,johan09}, is common in settings where phase transitions are analogous across all phases, except for the final phase where there are no transitions to a further phase before the process transitions to the non-repeating portion.  In this case, we again assume that $\lambda_m,\mu_m>0$, for simplicity. While in principle, the CAP method can be used to determine the limiting probabilities of \emph{any} class $\mathbb M$ Markov chain, for simplicity, we do not cover other cases (e.g., $r_1=r_2\neq r_3=r_5=r_7\neq r_4=r_6\neq r_1$), as the computations become increasingly cumbersome.

\subsection{The case where all nonzero bases are distinct}\label{sec:result1}
We are now ready to present our main result for the case where all nonzero bases, $r_m$, are distinct.  Theorem \ref{THM:RESULT}  expresses the stationary distribution of such class $\mathbb M$ Markov chains as the solution to a finite system of linear equations.
\begin{theorem}\label{THM:RESULT}
For any class $\mathbb M$ Markov chain such that all nonzero bases $r_1,r_2,\ldots,r_M$---given in Equation \eqref{eq:rm}---are distinct (i.e., $r_m\neq r_i$ implies either $m\neq i$ or $r_m=\lambda_m=0$), for all $j\ge j_0+1$, we have a limiting probability distribution of the form \[\pi_{(m,j)}=\sum_{k=0}^m c_{m,k}r_k^{j-j_0},\] where $\{c_{m,k}\}_{0\le k\le m\le M}$ are constants with respect to $j$.  Moreover, together with $\{\pi_{(m,j_0)}\}_{0\le m\le M}$ and $\{\pi_x\}_{x\in\mathcal N}$, the $\{c_{m,k}\}_{0\le k\le m\le M}$ values constitute $M(M+5)/2+|\mathcal N|+2$ ``unknown variables'' satisfying the following system of $M(M+5)/2+|\mathcal N|+3$ linear equations:
{\scriptsize
\begin{empheq}[left=\empheqlbrace]{align*}
%\begin{align*}
 c_{m,k}&=\frac{r_kr_m\left(\displaystyle{\sum_{i=k}^{m-1}\sum_{\Delta=-1}^1c_{i,k}\alp i{m-i}\Delta r_k^\Delta}\right)}{\lambda_m(r_k-r_m)(1-\phi_m(\alpha_m)r_k)}\hspace{2.45cm} (0\le k < m\le M\colon r_m,r_k>0)\\
c_{m,k}&=\frac{\displaystyle{\sum_{i=k}^{m-1}\sum_{\Delta=-1}^1c_{i,k}\alp i{m-i}\Delta r_k^\Delta}}{\mu_m(1-r_k)+\alpha_m}\hspace{3.44cm}(0\le k< m\le M\colon r_k>r_m=0)\\
c_{m,k}&=0 \hspace{8.04cm}(0\le k<m\le M\colon r_k=0)\\
c_{m,m}&=\pi_{(m,j_0)}-\sum_{k=0}^{m-1}c_{m,k} \hspace{7.28cm}(0\le m\le M)\\
\qquad\ \ \pi_{(m,j_0)}&=\frac{\displaystyle{\mu_m\sum_{k=0}^m c_{m,k}r_k+\sum_{x\in\mathcal N}q(x,(m,j_0))\pi_x}+\sum_{i=0}^{m-1}\sum_{\Delta=-1}^0\alp i{m-i}\Delta\pi_{(i,j_0-\Delta)}}{\displaystyle{\lambda_m+\sum_{i=m+1}^M\sum_{\Delta=0}^1\alp m{i-m}\Delta+\sum_{x\in\mathcal N}q((m,j_0),x)}}\hspace{0.3cm}(0\le m\le M)\\
\pi_x&=\frac{\displaystyle{\sum_{m=0}^M q((m,j_0),x)\pi_{(m,j_0)}+\sum_{y\in\mathcal N}q(y,x)\pi_y}}{\displaystyle{\sum_{m=0}^M q(x,(m,j_0))+\sum_{y\in\mathcal N}q(x,y)}}\hspace{5.16cm}(x\in\mathcal N)\\
1&=\sum_{x\in\mathcal N}\pi_x+\sum_{m=0}^{M}\sum_{k=0}^m\frac{c_{m,k}}{1-r_k},
%\end{align*}
\end{empheq}
}
where $q(x,y)$ denotes the transition rate from state $x$ to state $y$.
\end{theorem}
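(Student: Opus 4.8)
The plan is to prove the geometric form $\pi_{(m,j)}=\sum_{k=0}^{m}c_{m,k}r_k^{\,j-j_0}$ by induction on the phase $m$, reading off the displayed linear system as the relations its coefficients must satisfy. The engine of the induction is Theorem~\ref{thm:main} applied with $A=P_m$, combined with the closed-form clearing expectations of Theorem~\ref{thm:eij}. Because phases are unidirectional, the only states of $P_m^{\,c}$ from which $P_m$ can be entered in one jump are $(m,j_0)$ (entering $(m,j_0+1)$ at rate $\lambda_m$) and the lower-phase states $(i,\ell)$, $i<m$ (entering $(m,\ell+\Delta)$ at rate $\alp i{m-i}\Delta$, $\Delta\in\{-1,0,1\}$). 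Hence Theorem~\ref{thm:main} gives
\[\pi_{(m,j)}=\lambda_m\pi_{(m,j_0)}\,\rep{j_0+1}{j}{m}+\sum_{i=0}^{m-1}\sum_{\Delta=-1}^{1}\sum_{\ell}\pi_{(i,\ell)}\,\alp i{m-i}\Delta\,\rep{\ell+\Delta}{j}{m},\]
which is the identity I would manipulate at every step.

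For the base case $m=0$ only the first term survives (phase $0$ has no incoming phase transitions); Theorem~\ref{thm:eij} together with the definition of $\Omega_0$ collapses $\rep{j_0+1}{j}{0}$ to $r_0^{\,j-j_0}/\lambda_0$, so $\pi_{(0,j)}=\pi_{(0,j_0)}r_0^{\,j-j_0}$ and $c_{0,0}=\pi_{(0,j_0)}$. For the inductive step I would assume the form for all $i<m$, substitute it together with Theorem~\ref{thm:eij} into the identity above, and resolve the resulting sums. The degenerate phases ($\mu_m=0$, where $\phi_m\equiv0$, and $\lambda_m=0$, where $r_m=0$) are handled by exactly the same scheme, using the corresponding pure-birth or pure-death clearing expectations in place of Theorem~\ref{thm:eij}; these produce the second family of equations (for $r_k>r_m=0$) and, since a vanishing base contributes nothing at levels above $j_0$, the third family $c_{m,k}=0$ (for $r_k=0$).

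The crux, and the step I expect to be the most laborious, is the evaluation of the sum over the source level $\ell$. After substituting the inductive hypothesis and the piecewise formula of Theorem~\ref{thm:eij}, each $(i,k,\Delta)$ contribution splits into two geometric series in $\ell$: one over the range whose entry level lies at or below $j$ (carrying the factor $r_m^{\,j-(\ell+\Delta)}$) and one over the range above $j$ (carrying $\phi_m(\alpha_m)^{(\ell+\Delta)-j}$). Summing each series and simplifying with $r_m=\rho_m\phi_m(\alpha_m)$ and $\Omega_m=r_m/(\lambda_m(1-r_m\phi_m(\alpha_m)))$, the mixed powers $(r_kr_m\phi_m(\alpha_m))^{\,j-j_0}$ produced by the two ranges cancel against one another, leaving a clean linear combination of the pure powers $r_k^{\,j-j_0}$ ($k<m$) and $r_m^{\,j-j_0}$. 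Collecting the coefficient of each $r_k^{\,j-j_0}$ with $k<m$ gives the first family of equations; this is where distinctness of the bases enters, since the factor $(r_k-r_m)$ appears in the denominator. Every remaining $r_m^{\,j-j_0}$ term---including all boundary corrections arising from the truncation of the lower-phase sums near level $j_0$ (where the inductive formula must be evaluated through the consistency value $\pi_{(i,j_0)}=\sum_k c_{i,k}$) and from the $\lambda_m\pi_{(m,j_0)}$ entry---is absorbed into the single coefficient $c_{m,m}$. The main obstacle is keeping this accounting exact: tracking the level shifts $\Delta$, the truncation of each geometric series at $\ell=j_0$, and the cross-term cancellation.

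It remains to justify the four boundary families. The relation $c_{m,m}=\pi_{(m,j_0)}-\sum_{k=0}^{m-1}c_{m,k}$ is precisely the statement that the geometric form, extended formally to $j=j_0$, reproduces $\pi_{(m,j_0)}=\sum_{k=0}^{m}c_{m,k}$, and I would obtain it as the boundary-matching condition of the previous paragraph. The equation for $\pi_{(m,j_0)}$ is the ordinary balance equation at $(m,j_0)$, with the inflow from $(m,j_0+1)$ rewritten as $\mu_m\sum_k c_{m,k}r_k$ via the form now established at level $j_0+1$, and the inflows from $\mathcal N$ and from lower phases (via $\Delta\in\{-1,0\}$) listed directly. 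The equation for $\pi_x$ ($x\in\mathcal N$) is the balance equation at $x$, and the last equation is normalization, using $\sum_{j\ge j_0}\pi_{(m,j)}=\sum_{k=0}^{m}c_{m,k}/(1-r_k)$ (valid since $r_k<1$). Finally I would check the count: the $c_{m,k}$ supply $\binom{M+2}{2}$ unknowns, which with the $M+1$ values $\pi_{(m,j_0)}$ and the $|\mathcal N|$ values $\pi_x$ total $M(M+5)/2+|\mathcal N|+2$; the listed equations number one more, the single redundancy being resolved by the normalization constraint exactly as for any ergodic chain.
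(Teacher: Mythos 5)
Your proposal is correct and follows essentially the same route as the paper's proof: strong induction on the phase, applying Theorem \ref{thm:main} with $A=P_m$ and the clearing expectations of Theorem \ref{thm:eij}, splitting the sum over entry levels into the two geometric ranges (with exactly the mixed-power cancellation you describe), reading off the $c_{m,k}$ recursion for $k<m$ and absorbing the residual $r_m^{j-j_0}$ terms into $c_{m,m}$, then closing the system with the balance equations at level $j_0$ and in $\mathcal N$ plus normalization. Your treatment of the degenerate phases via pure-birth/pure-death expectations mirrors the paper's Appendix \ref{app:complete} (its Cases 2--4), and your equation/unknown count matches the paper's.
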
 We note before proving Theorem \ref{THM:RESULT} that solving this system of equations \emph{symbolically} will yield closed-form solutions for the limiting probabilities.  Alternatively, if all parameter values are fixed and known, an exact numerical solution can be found by solving the system numerically using exact methods.  Note that there is one more equation than there are unknowns, as is often the case in representations of limiting equations through balance equations.  Although one equation can be omitted from the system, the normalization equation must be used in order to guarantee a unique solution.

It is also worth observing that once the values $\{\pi_{x}\}_{x \in \mathcal{N}}$ and $\{\pi_{(m,j_{0})}\}_{0 \leq m \leq M}$ are known, all other $c_{m,k}$ terms can be computed recursively, without having to apply Gaussian elimination to the entire linear system given in Theorem \ref{THM:RESULT}.

This recursion may also simplify further for some types of class $\mathbb{M}$ Markov chains.  For example, if $\alp{m}{\Delta_1}{\Delta_2} = 0$ for all $\Delta_1 \geq 2$, $\Delta_2\in\{-1,0,1\}$, and $0 \leq m \leq M$, then when all bases are positive, for any $k<m$, we have
\begin{align*} c_{m,k} = c_{m-1,k}\frac{r_{k}r_{m}}{\lambda_{m}(r_{k} - r_{m})(1 - \phi_{m}(\alpha_{m})r_{k})}\sum_{\Delta = -1}^{1}\alp{m-1}1{\Delta} r_{k}^{\Delta} \end{align*} which further implies, for $k < m$,
\begin{align*} c_{m,k} = c_{k,k}\prod_{\ell = 1}^{m-k}\frac{r_{k}r_{k + \ell}}{\lambda_{k + \ell}(r_{k} - r_{k + \ell})(1 - \phi_{k + \ell}(\alpha_{k + \ell})r_{k})}\sum_{\Delta = -1}^{1}\alp{k + \ell - 1}1{\Delta} r_{k}^{\Delta} \end{align*} meaning that only the $\{c_{k,k}\}_{0\le k\le M}$ terms need to be computed recursively.

%We make the assumption that $r_1,r_2,\ldots,r_k$ are distinct (except for phases $n$, where $r_n=0$ and $\mu_n=0$).  We will cover the case where this does not hold in the next section.

\begin{proof}[Proof of Theorem \ref{THM:RESULT}]

%For simplicity, we present the proof for the case where $j_0=0$, which is without loss of generality, as setting $j_0=0$ is equivalent to a convenient ``renaming'' of states.  The result in the general case follows by appropriate substitutions of the quantity $(j-j_0)$ in place of $j$.

For simplicity, we present the proof for the case where $\lambda_m,\mu_m>0$ for all phases $m\in\{0,1,2,\ldots,M\}$.  The complete proof that includes the cases where one or both of $\lambda_m$ and $\mu_m$ may be 0 for some phases, $m$, is given in Appendix \ref{app:complete}.

We prove the theorem via strong induction on the phase, $m$.  Specifically, for each phase $m$, we will show that $\pi_{(m,j)}$ takes the form $\pi_{(m,j)}=\sum_{k=0}^m c_{m,k}r_k^{j-j_0}$ for all $j\ge j_0+1$, and show that $\{c_{m,k}\}_{0\le k\le m-1}$ satisfies
\[c_{m,k}=\displaystyle{\frac{r_{k}r_m}{\lambda_m(r_k-r_m)(1-\phi_m(\alpha_m)r_k)}\left(\sum_{i=k}^{m-1}\sum_{\Delta=-1}^1c_{i,k}\alp i{m-i}\Delta r_k^\Delta\right)}
\]
while $c_{m,m}=\pi_0-\sum_{k=0}^{m-1} c_{m,k}$.
Finally, after completing the inductive proof, we justify that the remaining linear equations in the proposed system are ordinary balance equations together with the normalization constraint.

\smallskip

\noindent\textbf{Base case:}

We begin our strong induction by verifying that the claim holds for the base case (i.e., for $m=0$).  %By the ergodicity requirement on class $\mathbb M$ Markov chains, $\lambda_0>0$, leaving two sub-cases when $m=0$: the case where $\mu_0>0$, and the case where $\mu_0=0$.
In this case, Equation \eqref{eq:thm} yields \[\rep {j_0+1}j0=\Omega_0 r_0^{j-j_0-1}(1-r_0\phi_0(\alpha_0))=\frac{r_0^{j-j_0}}{\lambda_0}.\]
%Now consider the other sub-case, where $\mu_0=0$, recalling that in this case, we have $r_0=\lambda_0/(\lambda_0+\alpha_0)$.  We calculate $\rep {j_0+1}j0$ for this case, by noting that transitions within states in $P_0$ cannot decrease the level, as follows: starting at state $(0,j_0+1)$, we either never visit state $(0,j)$ before leaving $P_0$, or we visit state $(0,j)$ \emph{exactly once} before leaving $P_0$. The latter occurs with probability \[\left(\frac{\lambda_0}{\lambda_0+\alpha_0}\right)^{j-j_0-1}=r_0^{j-j_0-1},\] in which case, we spend an average of $1/(\lambda_0+\alpha_0)=r_0/\lambda_0$ units of time in state $(0,j)$.  Hence, we find that \[\rep {j_0+1}j0=r_0^{j-j_0-1}\left(\frac{r_0}{\lambda_0}\right)=\frac{r_0^j}{\lambda_0},\] which coincides with our finding for the case where $\mu_0>0$.
We can now apply Theorem \ref{thm:main}, yielding
\begin{align*}
\pi_{(0,j)}&=\pi_{(0,j_0)}\lambda_0\rep {j_0+1}j0=\pi_{(0,j_0)}\lambda_0\left(\frac{r_0^{j-j_0}}{\lambda_0}\right)=\pi_{(0,j_0)}r_0^{j-j_0}\\
&=c_{0,0}r_0^{j-j_0},
\end{align*} where $c_{0,0}=\pi_{(0,j_0)}$.  Hence, $\pi_{(0,j)}$ takes the claimed form. Moreover, $c_{0,0}$ satisfies the claimed constraint as $c_{0,0}=\pi_{(0,j_0)}-\sum_{k=0}^{m-1} c_{m,k}=\pi_{(0,j_0)}-0=\pi_{(0,j_0)}$, because the sum is empty when $m=0$.  Note that when $m=0$, $\{c_{m,k}\}_{0\le k<m\le M}$ is empty, and hence, there are no constraints on these values that require verification.

%Finally, observe that because $\pi_{0,0}=c_{0,0}r_0^0$, so the claimed form also holds for $j=0$.

\smallskip

\noindent\textbf{Helpful computations:}

Before proceeding to the inductive step, we compute two useful expressions: First, we have $\lambda_m \rep{j_0+1}jm=r_m^{j-j_0},$ which follows from applying Equation \eqref{eq:thm}.  Next, we have
{\small 
\begin{align*}
&\sum_{\ell=1}^\infty r_k^{\ell-j_0}\rep\ell jm= \sum_{\ell=j_0+1}^j r_k^{\ell-j_0}\rep \ell j m+\sum_{\ell=j+1}^\infty r_k^{\ell-j_0}\rep\ell jm\\
&\qquad=\Omega_m\left(\sum_{\ell=j_0+1}^j r_k^{\ell-j_0}r_m^{j-\ell}\left(1-(r_m\phi_m(\alpha_m))^{\ell-j_0}\right)\right.\\
&\qquad\qquad\qquad\left.+\sum_{\ell=j+1}^\infty r_k^{\ell-j_0}\phi_{m}(\alpha_m)^{\ell-j}\left(1-(r_m\phi_m(\alpha_m))^{j-j_0}\right)\right)\\
&\qquad=\frac{r_{k}r_m(r_k^{j-j_0}-r_m^{j-j_0})}{\lambda_m(r_k-r_m)(1-\phi_m(\alpha_m)r_k)},
\end{align*}
}
where the last equality follows from well known geometric sum identities.  Note that this expression is well-defined because $r_k\neq r_m$ by assumption and $r_m\phi_m(\alpha_m)\neq 1$.

\smallskip

\noindent\textbf{Inductive step:}

Next, we proceed to the inductive step and assume the induction hypothesis holds for all phases $i\in\{0,1,\ldots,m-1\}$.  In particular, we assume that $\pi_{(i,j)}=\sum_{k=0}^i c_{i,k} r_k^{j-j_0}$ for all $i<m$.
%For convenience, we introduce the notation
%\[\Upsilon_{m,j}\equiv\lambda_m \rep{j_0+1}jm\quad\mbox{and}\quad\Psi_{m,k,j}\equiv\sum_{\ell=1}^\infty r_k^{\ell-j_0}\rep \ell j m,\] as these values are helpful in proving the inductive step. Proceeding to compute these values, we find that Equation \eqref{eq:thm} yields $\Upsilon_{m,j}=\lambda_m\rep {j_0+1}jm=r_m^{j-j_0}.$ We also find that
Applying Theorem \ref{thm:main}, the induction hypothesis, and our computations above, we have\footnote{
%Note that $\sum_{\Delta=-1}^1\alp i{m-i}\Delta r_k^\Delta$ is not well-defined when $r_k=0$, as $0^{-1}$ and $0^0$ are not well-defined.  However, this is just a convenient formal manipulation which will remain true if we assign any real value to $\sum_{\Delta=-1}^1\alp i{m-i}\Delta r_k^\Delta$ as $\Psi_{m,k,j}=0$ in the $r_k=0$ case, and the ``contribution'' to the sum by an index $k$ such that $r_k=0$ is also 0.  One can verify that this is ``harmless'' by examining such $k$ indices in isolation.  Note further
Note that we have also used the fact that $\pi_{(i,j_0)}$ also satisfies the claimed form for all $i<m$, which is true as $c_{i,i}=\pi_{(i,j_0)}-\sum_{k=0}^{i-1} c_{i,k}$ (from the inductive hypothesis) implies that $\pi_{(i,j_0)}=\sum_{k=0}^{i} c_{i,k}=\sum_{k=0}^{i} c_{i,k}r_k^0$.}
%, except that once again values of $r_k=0$ yield undefined quantities of the form $0^0$.  Once again, this is a convenient formal manipulation that will not affect our results if we simply assign $0^0=1$ in this context.}
{\footnotesize
\begin{align*}
\pi_{(m,j)}&=\pi_{(m,j_0)}\lambda_m\rep{j_0+1}jm+\sum_{i=0}^{m-1}\sum_{\ell=1}^\infty\sum_{\Delta=-1}^1\pi_{(i,\ell-\Delta)}\alp i{m-i}\Delta\rep {\ell} j m\nonumber\\
&=\pi_{(m,j_0)}r_m^{j-j_0}+\sum_{i=0}^{m-1}\sum_{\ell=1}^\infty\sum_{\Delta=-1}^1\alp i{m-i}\Delta\left(\sum_{k=0}^{i}c_{i,k}r_k^{\ell-j_0-\Delta}\rep \ell j m\right)\nonumber\\
&=\pi_{(m,j_0)}r_m^{j-j_0}+\sum_{k=0}^{m-1}\sum_{i=k}^{m-1}\left(c_{i,k}\sum_{\Delta=-1}^1\alp i{m-i}\Delta r_k^\Delta\right)\left(\sum_{\ell=1}^\infty r_k^{\ell-j_0}\rep \ell j m\right)\nonumber\\
&=\pi_{(m,j_0)}r_m^{j-j_0}+\sum_{k=0}^{m-1}\sum_{i=k}^{m-1}\left(c_{i,k}\sum_{\Delta=-1}^1\alp i{m-i}\Delta r_k^\Delta\right)\left(\frac{r_k r_m(r_k^{j-j_0}-r_m^{j-j_0})}{\lambda_m(r_k-r_m)(1-\phi_m(\alpha_m)r_k)}\right)\\
&=\sum_{k=0}^m c_{m,k}r_k^{j-j_0},
\end{align*}
}
where we have collected terms with
{\small
\begin{align*}
c_{m,k}&=\frac{\displaystyle{r_kr_m\left(\sum_{i=k}^{m-1}\sum_{\Delta=-1}^1c_{i,k}\alp i{m-i}\Delta r_k^\Delta\right)}}{\lambda_m(r_k-r_m)(1-\phi_m(\alpha_m)r_k)}& (&0\le k < m\le M)
\end{align*}
}
and $c_{m,m}=\pi_{(m,j_0)}-\sum_{k=0}^{m-1}c_{m,k}$, as claimed.  This completes the inductive step and the proof by induction.

%The expressions for $\Upsilon_{m,j}$ and $\Psi_{m,k,j}$ also coincide across Cases 3 and 4 (although they are distinct from their Case 1 and 2 counterparts), so we also consider these two cases together, noting that they jointly make up the case where $\lambda_m=r_m=0$:
%\begin{align*}
%\pi_{(m,j)}&=\pi_{(m,j_0)}\Upsilon_{m,j}+\sum_{k=0}^{m-1}\sum_{i=k}^{m-1}\left(c_{i,k}\sum_{\Delta=-1}^1\alp i{m-i}\Delta r_k^\Delta\right)\Psi_{m,k,j}\\
%&=0+\sum_{k=0}^{m-1}\sum_{i=k}^{m-1}\left(c_{i,k}\sum_{\Delta=-1}^1\alp i{m-i}\Delta r_k^\Delta\right)\left(\frac{r_k^{j-j_0}}{\mu_m(1-r_k)+\alpha_m}\right)\\
%&=\sum_{k=0}^m c_{m,k}r_k^{j-j_0}
%\end{align*}
%where we have collected terms with
%\begin{align*}
%c_{m,k}&=\frac{\displaystyle{\sum_{i=k}^{m-1}\sum_{\Delta=-1}^1c_{i,k}\alp i{m-i}\Delta r_k^\Delta}}{\mu_m(1-r_k)+\alpha_m}&(&1\le k < m\le M\colon r_m,r_k>0)
%\end{align*} and $c_{m,k}=0$ when $r_m=r_k=0$.  Observe that since $r_m=0$, it appears that we can allow $c_{m,m}$ to take any real value, so in order to satisfy the induction hypothesis, we set $c_{m,m}=\pi_{(m,0)}-\sum_{k=0}^{m-1}c_{m,k}$ in the $r_m=0$ case as well.  Also note that we have set $c_{m,k}=0$ when $r_k=0$ in both the $r_m>0$ and $r_m=0$ cases.  This completes the inductive step and the proof by induction.

\smallskip

\noindent\textbf{The balance equations and normalization constraint:}

The equations with $\pi_{(m,j_0)}$ and $\pi_x$ in their left-hand sides in our proposed system are ordinary balance equations (that have been normalized so that there are no coefficients on the left-hand side).

It remains to verify that the final equation, which is the normalization constraint:
\begin{align*}
1&=\sum_{x\in\mathcal N}\pi_x+\sum_{m=0}^M\pi_{(m,j_0)}+\sum_{m=0}^M\sum_{j=j_0+1}^\infty \pi_{(m,j)}\\
&=\sum_{x\in\mathcal N}\pi_x+\sum_{m=0}^M\sum_{k=0}^M c_{m,k}+\sum_{m=0}^M\sum_{k=0}^{m-1}\sum_{j=j_0+1}^\infty c_{m,k}r_k^{j-j_0}\\
&=\sum_{x\in\mathcal N}\pi_x+\sum_{m=0}^M\sum_{k=0}^{m} \frac{c_{m,k} r_k}{1-r_k}.
\end{align*}
\end{proof}

\subsection{The case where all bases agree}\label{sec:result2}

The CAP method can also be used in cases where some of the base terms coincide.  We assume, for the sake of readability, that $\lambda_{m}$ and $\mu_{m}$ are both positive for each phase $m$, but analogous results can still be derived when this is no longer the case.

In order to derive our result, we will make use of the following lemma: we omit the proof, but each formula can be derived using the lemmas contained in Appendix \ref{app:negbinlems}.

%\begin{lemma}  For $j_{0} + 1 \leq \ell \leq j-1$, we have
%\begin{eqnarray*} \mathbb{E}_{(m,\ell)}[T_{(m,j)}^{P_{m}}] = \Omega_{m}r_{m}^{j-\ell}(1 - (r_{m}\phi_{m}(\alpha_{m}))^{\ell - j_{0}}) \end{eqnarray*} where $%\Omega_{m} = r_{m}/(\lambda_{m}(1 - r_{m}\phi_{m}(\alpha_{m})))$.  Similarly, for $\ell \geq j$ we instead have
%\begin{eqnarray*} \mathbb{E}_{(m,\ell)}[T_{(m,j)}^{P_{m}}] = \Omega_{m}\phi_{m}(\alpha_{m})^{\ell - j}(1 - (r_{m}\phi_{m}(\alpha_{m}))^{j - j_{0}}. \end{eqnarray*} \end{lemma}

\begin{lemma} \label{lem:seriesequal} For a class $\mathbb M$ Markov chain with all $\lambda_m,\mu_m>0$ and $r_{0} = r_{1}=\cdots=r_{M}$, for each integer $u \geq 0$ and each integer $j \geq j_{0} + 1$, we have the following three identities:
\begin{align*}
\mbox{\textbullet}&\sum_{\ell = j_{0} + 2}^{\infty}{\ell - (j_{0} + 1) + u \choose u}r_{0}^{\ell - j_{0}}\rep{\ell-1}jm\\
&\quad= \sum_{k=1}^{u+1}\frac{\Omega_{m}r_{0}}{(1 - r_{0}\phi_{m}(\alpha_{m}))^{u + 1 - k}}{j - (j_{0} + 1) + k\choose k}r_{0}^{j - j_{0}},\qquad\qquad\qquad\qquad\qquad
\end{align*}
\begin{align*}
\mbox{\textbullet}& \sum_{\ell = j_{0} + 1}^{\infty}{\ell - (j_{0} + 1) + u \choose u}r_{0}^{\ell - j_{0}}\rep\ell jm \\
&\quad= \Omega_{m}{j - (j_{0} + 1) + u + 1 \choose u+1}r_{0}^{j - j_{0}}\\
&\qquad+ \sum_{k=1}^{u}\frac{\Omega_{m}r_{0}\phi_{m}(\alpha_{m})}{(1 - r_{0}\phi_{m}(\alpha_{m}))^{u + 1 - k}}{j - (j_{0} + 1) + k \choose k}r_{0}^{j - j_{0}},\qquad\qquad\qquad\qquad\qquad
\end{align*}
\begin{align*}
\mbox{\textbullet}&\sum_{\ell = j_{0} + 1}^{\infty}{\ell - (j_{0} + 1) + u \choose u}r_{0}^{\ell - j_{0}}\rep{\ell+1}jm \\
&\quad= \frac{\Omega_{m}}{r_{0}}{j - (j_{0} + 1) + u + 1 \choose u+1}r_0^{j-j_0}- {j - (j_{0} + 1) + u \choose u}\frac{r_{0}^{j - j_{0}}}{\lambda_m}\qquad\qquad\qquad\qquad\qquad\\
&\qquad+ \sum_{k = 1}^{u}\frac{\Omega_{m}r_{0}\phi_{m}(\alpha_{m})^{2}}{(1 - r_{0}\phi_{m}(\alpha_{m}))^{u + 1 - k}}{j - (j_{0} + 1) + k \choose k}r_{0}^{j - j_{0}}.
\end{align*}
\end{lemma}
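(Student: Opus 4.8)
The plan is to prove all three identities by a single two-step recipe: insert the closed form for $\rep{\ell}{j}{m}$ supplied by Theorem~\ref{thm:eij}, and then collapse each resulting series using standard negative-binomial partial-sum identities (the content of Appendix~\ref{app:negbinlems}). Since $r_0 = r_1 = \cdots = r_M$ I write $r_m = r_0$ throughout and abbreviate $\phi := \phi_m(\alpha_m)$, so Theorem~\ref{thm:eij} reads
\[
\rep{\ell}{j}{m} = \Omega_m r_0^{\,j-\ell}\bigl(1 - (r_0\phi)^{\ell-j_0}\bigr)\ \ (\ell \le j), \qquad \rep{\ell}{j}{m} = \Omega_m \phi^{\,\ell-j}\bigl(1 - (r_0\phi)^{j-j_0}\bigr)\ \ (\ell \ge j).
\]
The only summation tools needed are the geometric--binomial evaluation $\sum_{n\ge 0}\binom{n+u}{u}x^n = (1-x)^{-(u+1)}$, the hockey-stick identity $\sum_{n=0}^{N}\binom{n+u}{u} = \binom{N+u+1}{u+1}$, and the tail formula
\[
\sum_{n=N+1}^{\infty}\binom{n+u}{u}x^n = x^{N+1}\sum_{k=0}^{u}\binom{N+k}{k}\frac{1}{(1-x)^{u+1-k}} .
\]
The crucial bookkeeping observation is that, taking $N := j-(j_0+1)$, one has $\binom{N+k}{k} = \binom{j-(j_0+1)+k}{k}$ and $x^{N+1}=x^{\,j-j_0}$, which are precisely the factors appearing on the stated right-hand sides.

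For each identity I would reindex by $n = \ell - (j_0+1)$, so that $r_0^{\ell-j_0} = r_0^{\,n+1}$, and split the sum at the value of $n$ where the origin-level argument of the expectation crosses $j$: for the second identity that argument is $\ell$ and the break is at $n = N$; for the first it is $\ell-1$ (break at $n = N+1$); for the third it is $\ell+1$ (break at $n = N-1$). On the lower ($\le j$) piece the factor $r_0^{\,j-\ell}$ multiplies $r_0^{\ell-j_0}$ to leave a clean $r_0^{\,j-j_0}$ outside the sum, so that piece is a hockey-stick sum (from the constant $1$) minus a partial negative-binomial sum in $x = r_0\phi$; on the upper ($\ge j$) piece the $\ell$-dependence combines into powers of $r_0\phi$, so that piece is a negative-binomial \emph{tail} in $x = r_0\phi$. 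I would then apply the three tools above and collect terms.

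The payoff, which I would write out in full for the cleanest case (the second identity), is a clean cancellation. The partial sum produces $(1-r_0\phi)^{-(u+1)}$ plus boundary terms carrying $(r_0\phi)^{N+1}=(r_0\phi)^{\,j-j_0}$, while the tail produces boundary terms also carrying $(r_0\phi)^{N+1}$; after the upper piece is multiplied by its prefactor $\Omega_m\bigl(1-(r_0\phi)^{j-j_0}\bigr)$, the two families of $(r_0\phi)^{j-j_0}$-weighted boundary terms cancel exactly, and the surviving $k=0$ term annihilates the stray $(1-r_0\phi)^{-(u+1)}$ contribution, leaving the hockey-stick term $\Omega_m\binom{j-(j_0+1)+u+1}{u+1}r_0^{\,j-j_0}$ plus the advertised $\sum_{k=1}^{u}$ correction. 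The first and third identities follow by the same route, the differences lying entirely in boundary accounting: the shift $\ell\mapsto\ell-1$ pulls out a factor $\Omega_m r_0$ and folds the hockey-stick contribution into the $k=u+1$ slot of the sum (explaining why that sum runs to $u+1$), while the shift $\ell\mapsto\ell+1$ pulls out $\Omega_m/r_0$, produces the $\phi^{2}$ in the correction terms, and — because its break now sits at $n=N-1$ — leaves one uncancelled head term that surfaces as $-\binom{j-(j_0+1)+u}{u}\,r_0^{\,j-j_0}/\lambda_m$, reconciled via $\Omega_m(1-r_0\phi) = r_0/\lambda_m$. The main obstacle is thus not conceptual but the index-shift accounting for the first and third identities: keeping the split points, the partial head terms ($n=0$ versus $n=1$), and the exponents of $r_0$ and $\phi$ aligned so that every cancellation lands exactly on the claimed expression. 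This is routine in principle but easy to derail with an off-by-one, which is why I would first verify the mechanism on $u=0$ and on the symmetric second identity before committing to the general bookkeeping.
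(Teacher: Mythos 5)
Your proposal is correct and is essentially the paper's intended derivation: the paper omits the proof of Lemma \ref{lem:seriesequal}, saying only that each formula follows from the negative-binomial identities of Appendix \ref{app:negbinlems}, and your recipe---substitute Theorem \ref{thm:eij}, split each series at the point where the level argument crosses $j$, and collapse the head and tail pieces---uses exactly those tools (your geometric-binomial evaluation, truncation, and tail formula are Lemmas \ref{negbinlemmaone}--\ref{negbinlemmathree} in disguise; e.g., your tail formula is Lemma \ref{negbinlemmathree} after applying Pascal's rule to the bracketed differences). The cancellation pattern you describe also checks out: the $(r_0\phi_m(\alpha_m))^{j-j_0}$-weighted boundary terms from head and tail cancel, the $k=0$ tail term absorbs the stray $(1-r_0\phi_m(\alpha_m))^{-(u+1)}$ contribution, and the identity $\Omega_m(1-r_0\phi_m(\alpha_m))=r_0/\lambda_m$ accounts for the $-{j-(j_0+1)+u \choose u}\,r_0^{j-j_0}/\lambda_m$ term in the third identity.
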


\begin{theorem}\label{THM:RESULT2} For a class $\mathbb M$ Markov chain with all $\lambda_m,\mu_m>0$ and $r_0=r_1=\cdots=r_M$, for all $0 \leq m \leq M$, $j \geq j_{0}$, we have
\[ \pi_{(m,j)} = \sum_{k=0}^{m}c_{m,k}{j - (j_{0} + 1) + k \choose k}r_{0}^{j - j_{0}}\] where the $\{c_{m,k}\}_{0 \le k \le m \le M}$ values satisfy the system of linear equations
{\small
\begin{empheq}[left=\empheqlbrace]{align*}
\ c_{m,0} &= \pi_{(m,j_{0})},&(& 0 \le m \le M)\\
 \ c_{m,k} &= \Omega_{m}r_{0}\sum_{u=k}^{m-1}\sum_{i=u}^{m-1}c_{i,u}\left[\sum_{\Delta = -1}^{1}\frac{\displaystyle{\alp{i}{m - i}{\Delta}\phi_{m}(\alpha_{m})^{\Delta + 1}}}{(1 - r_{0}\phi_{m}(\alpha_{m}))^{u + 1 - k}}\right]&& \\
 &\qquad - \frac{1}{\lambda_{m}}\sum_{i=k}^{m-1}c_{i,k}\alp{i}{m-i}{1} &&\\
 &\qquad +\Omega_{m}\sum_{i=k-1}^{m-1}c_{i,k-1}\left[\sum_{\Delta = -1}^{1}\alp{i}{m - i}{\Delta}r_{0}^{-\Delta}\right]&(&1\le k\le m-1)\\
 \ \ c_{m,m} &= c_{m-1, m-1}\Omega_{m}\sum_{\Delta = -1}^{1}\alp {m-1}{1}{\Delta}r_{0}^{\Delta}&(&1 \le m \le M),
\end{empheq}}
together with the usual balance equations and normalization constraint.
\end{theorem}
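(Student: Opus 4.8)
The plan is to prove Theorem~\ref{THM:RESULT2} by \emph{strong induction on the phase}~$m$, following the same skeleton as the proof of Theorem~\ref{THM:RESULT}, but with the geometric-sum evaluations replaced by the three negative-binomial identities of Lemma~\ref{lem:seriesequal}, which are exactly the objects the coinciding-bases regime produces. For the base case $m=0$, Theorem~\ref{thm:eij} gives $\lambda_0\rep{j_0+1}{j}{0}=r_0^{j-j_0}$, so applying Theorem~\ref{thm:main} with $A=P_0$ yields $\pi_{(0,j)}=\pi_{(0,j_0)}r_0^{j-j_0}$; since $\binom{j-(j_0+1)+0}{0}=1$, this is the claimed form with $c_{0,0}=\pi_{(0,j_0)}$, matching $c_{m,0}=\pi_{(m,j_0)}$.

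For the inductive step I would assume $\pi_{(i,\ell)}=\sum_{u=0}^{i}c_{i,u}\binom{\ell-(j_0+1)+u}{u}r_0^{\ell-j_0}$ for all $i<m$ and all $\ell\ge j_0$, then apply Theorem~\ref{thm:main} with $A=P_m$. The only states of $A^c$ feeding into $P_m$ are $(m,j_0)$ (via $\lambda_m$) and the states $(i,\ell)$ with $i<m$ (via $\alp{i}{m-i}{\Delta}$, landing at level $\ell+\Delta$), so
\[
\pi_{(m,j)}=\pi_{(m,j_0)}\lambda_m\rep{j_0+1}{j}{m}+\sum_{i=0}^{m-1}\sum_{\Delta=-1}^{1}\alp{i}{m-i}{\Delta}\sum_{\ell}\pi_{(i,\ell)}\rep{\ell+\Delta}{j}{m},
\]
where the admissible $\ell$-range depends on $\Delta$ (namely $\ell\ge j_0+2,\ j_0+1,\ j_0$ for $\Delta=-1,0,1$, so that the entry state lies in $P_m$). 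The first term equals $\pi_{(m,j_0)}r_0^{j-j_0}$. After substituting the induction hypothesis, the inner sum over the source level $\ell$, for each fixed $u$ and $\Delta$, is precisely one of the three identities of Lemma~\ref{lem:seriesequal} (the $\Delta=-1,0,1$ cases matching the three bullets in order), each returning a linear combination of the basis functions $\binom{j-(j_0+1)+k}{k}r_0^{j-j_0}$.

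The remaining work, and the main obstacle, is the coefficient bookkeeping. Each identity spreads a single degree-$u$ source term across target degrees $k=1,\dots,u+1$, with ``middle'' coefficients sharing the common shape $\Omega_m r_0\,\phi_m(\alpha_m)^{\Delta+1}/(1-r_0\phi_m(\alpha_m))^{u+1-k}$, a distinguished leading ($k=u+1$) coefficient $\Omega_m r_0^{-\Delta}$, and, only for $\Delta=1$, an extra degree-$u$ term of size $-\tfrac{1}{\lambda_m}$. Summing over $i\ge u$, $u\ge k$, and $\Delta$, and reading off the coefficient of $\binom{j-(j_0+1)+k}{k}r_0^{j-j_0}$, recovers the three-line formula for $c_{m,k}$ ($1\le k\le m-1$): the middle pieces give the first line, the $\Delta=1$ extra term gives the $-\tfrac{1}{\lambda_m}$ line, and the leading pieces (where source degree $u=k-1$ raises the degree by one) give the $\Omega_m\sum_i c_{i,k-1}\sum_\Delta\alp{i}{m-i}{\Delta}r_0^{-\Delta}$ line; the top degree $k=m$ receives only a leading piece from $c_{m-1,m-1}$, yielding the closed form for $c_{m,m}$. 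The delicate point is the degree-$0$ coefficient: the $\Delta=1$ identity of Lemma~\ref{lem:seriesequal} begins at $\ell=j_0+1$, so I must separately restore the $\ell=j_0$ source term, which is nonzero only for $u=0$, where it equals $c_{i,0}\,\rep{j_0+1}{j}{m}=c_{i,0}r_0^{j-j_0}/\lambda_m$ and exactly cancels the $u=0$ instance of the $-\tfrac{1}{\lambda_m}$ extra term. This cancellation leaves $\pi_{(m,j_0)}r_0^{j-j_0}$ as the sole degree-$0$ contribution, giving $c_{m,0}=\pi_{(m,j_0)}$ and closing the induction. Finally, the $\pi_{(m,j_0)}$ and $\pi_x$ balance equations, together with the normalization constraint evaluated as in Theorem~\ref{THM:RESULT} (now summing $\sum_{j\ge j_0+1}\binom{j-(j_0+1)+k}{k}r_0^{j-j_0}$), complete the system.
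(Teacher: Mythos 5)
Your proposal is correct and follows essentially the same route as the paper's own proof: strong induction on the phase, the identical application of Theorem \ref{thm:main} with entry-level ranges $\ell \ge j_0+2,\, j_0+1,\, j_0$ for $\Delta=-1,0,1$, evaluation of the inner sums via the three identities of Lemma \ref{lem:seriesequal}, and the same collection of middle, leading, and extra $-1/\lambda_m$ coefficients into the recursions for $c_{m,k}$, $c_{m,m}$, and $c_{m,0}$. The only difference is presentational: you spell out the cancellation between the restored $\ell=j_0$ source term and the $u=0$ instance of the $-1/\lambda_m$ term (which is what forces $c_{m,0}=\pi_{(m,j_0)}$), a step the paper compresses into ``after applying Lemma \ref{lem:seriesequal} and simplifying''---and your $r_0^{-\Delta}$ in the $c_{m,m}$ recursion agrees with the paper's proof rather than the theorem statement's typographical $r_0^{\Delta}$.
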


\begin{proof} Starting with phase $0$, we observe as before that, for $j \geq j_{0} + 1$,
\begin{align*} \pi_{(0,j)} = \pi_{(0,j_{0})}\lambda_{0}\rep {j_0+1}j0 = \pi_{(0,j_{0})}r_{0}^{j - j_{0}} \end{align*}
and this equality is clearly also valid when $j = j_{0}$.

We now proceed by induction.  Assuming the result holds for $\pi_{(i,\ell)}$ for $0 \leq i \leq m-1$, $\ell \geq j_{0}$, we have
{\small \begin{align*}
\pi_{(m,j)} &= \pi_{(m,j_{0})}\lambda_{m}\rep{j_0+1}jm + \sum_{i=0}^{m-1}\sum_{\ell = j_{0} + 2}^{\infty}\pi_{(i,\ell)}\alp{i}{m-i}{-1}\rep{\ell-1}jm \\
&\qquad+ \sum_{i=0}^{m-1}\sum_{\ell = j_{0} + 1}^{\infty}\pi_{(i,\ell)}\alp{i}{m-i}0\mathbb{E}_{(m,\ell)}[T_{(m,j)}^{P_{m}}] \\
&\qquad+ \sum_{i=0}^{m-1}\sum_{\ell = j_{0}}^{\infty} \pi_{(i,\ell)}\alp{i}{m-i}1\mathbb{E}_{(m,\ell+1)}[T_{(m,j)}^{P_{m}}] \\
&= \pi_{(m,j_{0})}r_{0}^{j - j_{0}} + \sum_{i=0}^{m-1}c_{i,0}\Omega_{m}\left[\sum_{\Delta = -1}^{1}\alp{i}{m-i}{-1}r_{0}^{-\Delta}\right]{j - (j_{0} + 1) + 1 \choose 1}r_{0}^{j - j_{0}} \\
&\qquad+ \sum_{u=1}^{m-1}\sum_{i=u}^{m-1}c_{i,u}\alp i{m-i}{-1}\sum_{\ell = j_{0} + 2}^{\infty}{\ell - (j_{0} + 1) + u \choose u}r_{0}^{\ell - j_{0}}\rep{\ell-1}jm \\
&\qquad+ \sum_{u=1}^{m-1}\sum_{i=u}^{m-1}c_{i,u}\alp{i}{m-i}0\sum_{\ell = j_{0} + 1}^{\infty}{\ell - (j_{0} + 1) + u \choose u}r_{0}^{\ell - j_{0}}\rep\ell jm \\
&\qquad+ \sum_{u=1}^{m-1}\sum_{i=u}^{m-1}c_{i,u}\alp{i}{m-i}1\sum_{\ell = j_{0} + 1}^{\infty}{\ell - (j_{0} + 1) + u \choose u}r_{0}^{\ell - j_{0}}\rep{\ell+1}jm
\end{align*}}
and after applying Lemma \ref{lem:seriesequal} and simplifying, we conclude that
{\footnotesize \begin{align*} \pi_{(m,j)} &= \pi_{(m,j_{0})}r_{0}^{j - j_{0}} \\
&+ \sum_{k=0}^{m-1}\sum_{i=k}^{m-1}c_{i,k}\Omega_{m}\left[\sum_{\Delta = -1}^{1}\alp{i}{m-i}{\Delta}r_{0}^{-\Delta}\right]{j - (j_{0} + 1) + k + 1 \choose k + 1}r_{0}^{j - j_{0}} \\
&+ \sum_{k=1}^{m-1}\sum_{u=k}^{m-1}\sum_{i=u}^{m-1}c_{i,u}\Omega_{m}r_{0}\left[\sum_{\Delta = -1}^{1}\frac{\alp{i}{m-i}{\Delta}\phi_{m}(\alpha_{m})^{-\Delta + 1}}{(1 - r_{0}\phi_{m}(\alpha_{m}))^{u + 1 - k}}\right]{j - (j_{0} + 1) + k \choose k}r_{0}^{j - j_{0}} \\ &- \frac{1}{\lambda_{m}}\sum_{k=1}^{m-1}\sum_{i=k}^{m-1}c_{i,k}\alp{i}{m-i}{1}{j - (j_{0} + 1) + k \choose k}r_{0}^{j - j_{0}} \\
&= \sum_{k=0}^{m}c_{m,k}{j - (j_{0} + 1) + k \choose k}r_{0}^{j - j_{0}},
\end{align*}}
where we have collected terms so that for $1\le k\le m-1$ we have
\begin{align*}
c_{m,k} &= \sum_{u=k}^{m-1}\sum_{i=u}^{m-1}c_{i,u}\Omega_{m}r_{0}\left[\sum_{\Delta = -1}^{1}\frac{\alp{i}{m - i}{\Delta}\phi_{m}(\alpha_{m})^{-\Delta + 1}}{(1 - r_{0}\phi_{m}(\alpha_{m}))^{u + 1 - k}}\right] \\
&- \frac{1}{\lambda_{m}}\sum_{i=k}^{m-1}c_{i,k}\alp{i}{m-i}1 \\
&+ \Omega_{m}\sum_{i=k-1}^{m-1}c_{i,k-1}\left[\sum_{\Delta = -1}^{1}\alp{i}{m-i}{\Delta}r_{0}^{-\Delta}\right],
\end{align*}
while $c_{m,0}=\pi_{(m,j_0)}$ and $c_{m,m} = c_{m-1, m-1}\Omega_{m}\left[\sum_{\Delta = -1}^{1}\alp{m-1}{1}{\Delta}r_{0}^{-\Delta}\right]$, as claimed.
\end{proof}

\subsection{The case where all bases except $r_M$ agree}\label{sec:result3}

We conclude this section by considering the case where $r_{0} = r_{1} = \cdots = r_{M-1} \neq r_{M}$, as this case is satisfied by the Markov chains studied in \cite{johan06, johan09}.  The following lemma can be used to compute the limiting probability distribution.   The proof is again omitted, but as with Lemma \ref{lem:seriesequal}, each formula can be derived using the lemmas contained in Appendix \ref{app:negbinlems}.

\begin{lemma} For a class $\mathbb M$ Markov chain with all $\lambda_m,\mu_m>0$ and $r_{0}=r_{1}=\cdots=r_{M-1}\neq r_{M}$, for each integer $u \geq 0$, we have the following three identities:
{\small
\begin{align*}
\mbox{\textbullet}&\sum_{\ell = j_{0} + 2}^{\infty}{\ell - (j_{0} + 1) + u \choose u}r_{0}^{\ell - j_{0}}\rep{\ell-1}jM \\
&\quad= -\Omega_{M}r_{0}\left[\frac{1}{(1 - r_{0})^{u+1}}-\frac{1}{(1 - \frac{r_{0}}{r_{M}})^{u+1}} \right]r_{M}^{j - j_{0}} \\
&\qquad+ \sum_{k = 0}^{u}\Omega_{M}r_{0}\left[\frac{1}{(1 - r_{0})^{u+1-k}}-\frac{1}{(1 - \frac{r_{0}}{r_{M}})^{u+1-k}}\right]{j - (j_{0} + 1) + k \choose k}r_{0}^{j - j_{0}},
\end{align*}
\begin{align*}
\mbox{\textbullet}&\sum_{\ell = j_{0}+1}^{\infty}{\ell - (j_{0} + 1) + u \choose u}r_{0}^{\ell - j_{0}}\rep\ell jM \\
&\quad= -\Omega_{M}r_{0}\left[\frac{1}{(1 - r_{0})^{u+1}} - \frac{1}{r_{M}(1 - \frac{r_{0}}{r_{M}})^{u+1}}\right]r_{M}^{j - j_{0}} \\
&\qquad+ \sum_{k=0}^{u}\Omega_{M}r_{0}\left[\frac{1}{(1 - r_{0})^{u+1-k}} - \frac{1}{r_{M}(1 - \frac{r_{0}}{r_{M}})^{u+1-k}}\right]{j - (j_{0} + 1) + k \choose k}r_{0}^{j - j_{0}}, \\
\end{align*}
\begin{align*}
\mbox{\textbullet}&\sum_{\ell = j_{0} + 1}^{\infty}{\ell - (j_{0} + 1) + u \choose u}r_{0}^{\ell - j_{0}}\rep{\ell+1}jM \\
&\quad= -\Omega_{M}r_{0}\left[\frac{1}{(1 - r_{0})^{u+1}} - \frac{1}{r_{M}^{2}(1 - \frac{r_{0}}{r_{M}})^{u+1}}\right]r_{M}^{j - j_{0}}- {j - (j_{0} + 1) + u \choose u}\frac{r_{0}^{j - j_{0}}}{\lambda_M}\\
&\qquad+\sum_{k=0}^{u}\Omega_{M}r_{0}\left[\frac{1}{(1 - r_{0})^{u+1 - k}} - \frac{1}{r_{M}^{2}(1 - \frac{r_{0}}{r_{M}})^{u+1-k}}\right]{j - (j_{0} + 1) + k \choose k}r_{0}^{j - j_{0}}.
\end{align*}}
\end{lemma}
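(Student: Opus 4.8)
The plan is to prove all three identities by the same route used for Lemma~\ref{lem:seriesequal}: substitute the closed form for $\rep{\ell'}{j}{M}$ supplied by Theorem~\ref{thm:eij} (with $\ell'\in\{\ell-1,\ell,\ell+1\}$), split each infinite sum over $\ell$ at the level where the two branches of \eqref{eq:thm} meet, and evaluate the resulting weighted geometric series with the negative-binomial summation lemmas of Appendix~\ref{app:negbinlems}. The feature that makes the phase-$M$ case cleaner than the generic phase treated in Lemma~\ref{lem:seriesequal} is that $\alpha_M=0$, so $\phi_M(\alpha_M)=\phi_M(0)=1$, $r_M=\rho_M$, and $\Omega_M=r_M/(\lambda_M(1-r_M))$. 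With these substitutions the two branches of Theorem~\ref{thm:eij} collapse to
\[ \rep{\ell}{j}{M}=\Omega_M r_M^{\,j-\ell}\bigl(1-r_M^{\,\ell-j_0}\bigr)\ \ (\ell\le j),\qquad \rep{\ell}{j}{M}=\Omega_M\bigl(1-r_M^{\,j-j_0}\bigr)\ \ (\ell\ge j), \]
the latter being constant in $\ell$. This is precisely why the denominators in the statement are powers of $(1-r_0)$ and $(1-\tfrac{r_0}{r_M})$ rather than powers of $(1-r_0\phi_M(\alpha_M))$ as in Lemma~\ref{lem:seriesequal}.

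For a fixed shift, writing the factor $1-r_M^{\,\ell-j_0}$ as $1$ minus a power of $r_M$ splits each summand into a $r_M^{\,j-j_0}$-scaled, $r_0^{\,\ell-j_0}$-weighted piece and a $r_M^{\,j-\ell}r_0^{\,\ell-j_0}$ piece. The first (together with the constant $\ell\ge j$ branch) has geometric ratio $r_0$; the second, after absorbing a power of $r_M$, has geometric ratio $r_0/r_M$. Thus every sum reduces to a combination of two template sums of the form $\sum_\ell\binom{\ell-(j_0+1)+u}{u}x^{\ell-j_0}$ with $x\in\{r_0,\,r_0/r_M\}$, taken either over the finite range $j_0+1\le\ell\le j$ (from the $\ell\le j$ branch) or over an infinite tail (from the constant branch). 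Appendix~\ref{app:negbinlems} evaluates exactly these: the infinite negative-binomial series contributes the leading term proportional to $r_M^{\,j-j_0}$, with coefficient $1/(1-x)^{u+1}$, while the partial-sum correction contributes the lower-order terms $\binom{j-(j_0+1)+k}{k}r_0^{\,j-j_0}$ with coefficients $1/(1-x)^{u+1-k}$ for $0\le k\le u$.

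Collecting the $x=r_0$ and $x=r_0/r_M$ contributions then assembles each stated identity, the two bracketed denominators being the signatures of these two geometric ratios. The only wrinkle is the $+1$-shift identity: there the crossover between the two branches sits at $\ell=j-1$ rather than $\ell=j$, so the finite ratio-$(r_0/r_M)$ sum loses its top summand. Reinstating that single crossover term (at source level $\ell+1=j$) and using the identity $1/\lambda_M=\Omega_M(1-r_M)/r_M$ produces the extra term $-\binom{j-(j_0+1)+u}{u}r_0^{\,j-j_0}/\lambda_M$, mirroring the analogous term in the third identity of Lemma~\ref{lem:seriesequal}; the $-1$-shift and $0$-shift crossovers cancel cleanly and leave no such residual.

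I expect the substitutions and the geometric-series evaluations to be routine; the real difficulty is the bookkeeping of boundary terms at $\ell=j$ (and $\ell=j\pm1$ under the shifts), ensuring that the finite-sum upper-limit contributions recombine with the infinite-tail contributions into a clean split between a single $r_M^{\,j-j_0}$ term and the negative-binomial basis $\{\binom{j-(j_0+1)+k}{k}r_0^{\,j-j_0}\}_k$. I would tame this by completing each finite sum to an infinite one and subtracting its explicit tail, so that the ratio-$(r_0/r_M)$ series (whose tail carries the $r_M^{\,j-j_0}$ dependence) and the ratio-$r_0$ series are handled uniformly, and then verifying the coefficient of each basis element $\binom{j-(j_0+1)+k}{k}r_0^{\,j-j_0}$ directly against the Appendix~\ref{app:negbinlems} formulas.
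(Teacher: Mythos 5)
Your route is the one the paper intends: the paper omits this proof entirely, saying only that (as with Lemma \ref{lem:seriesequal}) each formula can be derived from the lemmas of Appendix \ref{app:negbinlems}, and your plan---substitute Theorem \ref{thm:eij}, use $\alpha_M=0$ to get $\phi_M(\alpha_M)=1$, $r_M=\rho_M$ and $\Omega_M=r_M/(\lambda_M(1-r_M))$, split each sum at the branch crossover, and reduce everything to negative-binomial sums with ratios $r_0$ and $r_0/r_M$---is exactly that computation. Your diagnosis of the $+1$-shift residual term $-\binom{j-(j_0+1)+u}{u}r_0^{j-j_0}/\lambda_M$ (crossover at $\ell=j-1$ together with $1/\lambda_M=\Omega_M(1-r_M)/r_M$) is also correct, and the $u=0$ instances of the second and third identities check out against this bookkeeping.

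However, one step of your execution plan fails on part of the parameter range: you propose to ``complete each finite sum to an infinite one and subtract its explicit tail,'' but the ratio-$(r_0/r_M)$ series $\sum_{\ell}\binom{\ell-(j_0+1)+u}{u}(r_0/r_M)^{\ell-j_0}$ converges only when $r_0<r_M$, whereas the lemma assumes only $r_0\neq r_M$; the bases all lie in $[0,1)$ but are not ordered, so $r_0>r_M$ is possible, and in that regime the infinite series you want to subtract against does not exist. The repair is already in the appendix: Lemma \ref{negbinlemmatwo} evaluates the finite ratio-$(r_0/r_M)$ partial sums directly and requires only $\beta\neq 1$, so you should apply it to the $\ell\le j$ pieces (with the upper limit shifted appropriately under the $\pm1$ shifts), use Pascal's rule in the form $\binom{j-j_0+k}{k}-\binom{j-j_0+k-1}{k-1}=\binom{j-(j_0+1)+k}{k}$ to put its coefficients into the stated basis, and reserve Lemmas \ref{negbinlemmaone} and \ref{negbinlemmathree} (which do need $\beta\in(0,1)$) for the ratio-$r_0$ pieces alone, where $r_0<1$ guarantees convergence. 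Alternatively, run your complete-and-subtract argument for $r_0<r_M$ and extend to $r_0>r_M$ by observing that, for fixed $j$ and $u$, both sides of each identity are rational functions of $r_M$; but the direct finite-sum route is cleaner and is what the appendix is set up to deliver.
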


Our next theorem gives an expression for the stationary distribution of a class $\mathbb M$ Markov chain when $r_{0} = r_{1} = \cdots = r_{M-1} \neq r_{M}$.  As the proof is similar to those of Theorems \ref{THM:RESULT} and \ref{THM:RESULT2}, we omit the proof.

\begin{theorem}\label{THM:RESULT3} Suppose a class $\mathbb M$ Markov chain has all $\lambda_m,\mu_m>0$ and $r_{0}=r_{1}=\cdots=r_{M-1}\neq r_{M}$.  Then, for all $0 \leq m \leq M-1$, $j \geq j_{0}$,
\begin{align*}
\pi_{(m,j)} &= \sum_{k=0}^{m}c_{m,k}{j - (j_{0} + 1) + k \choose k}r_{0}^{j - j_{0}},\\
\pi_{(M,j)} &= \sum_{k=0}^{M-1}c_{M,k}{j - (j_{0} + 1) + k \choose k}r_{0}^{j - j_{0}} + c_{M,M}r_{M}^{j - j_{0}}
\end{align*}
where the $\{c_{m,k}\}_{0 \leq k\le m \le M}$ values satisfy the system of linear equations
{\footnotesize
\begin{empheq}[left=\empheqlbrace]{align*}
c_{m,0} &= \pi_{(m,j_{0})}&(&0\le m < M)\\
c_{m,k} &= \Omega_{m}r_{0}\sum_{u=k}^{m-1}\sum_{i=u}^{m-1}c_{i,u}\left[\sum_{\Delta=-1}^1\frac{\alp{i}{m - i}{\Delta}\phi_{m}(\alpha_{m})^{\Delta + 1}}{(1 - r_{0}\phi_{m}(\alpha_{m}))^{u + 1 - k}}\right]&&\\
&\qquad + \Omega_{m}\sum_{i=k-1}^{m-1}c_{i,k-1}\left[\sum_{\Delta = -1}^{1}\alp{i}{m - i}{\Delta})r_{0}^{-\Delta}\right]&&\\
&\qquad - \frac{1}{\lambda_{m}}\sum_{i=k}^{m-1}c_{i,k}\alp{i}{m-i}{1}&(&1\le k< m<M)\\
c_{m,m} &= c_{m-1, m-1}\Omega_{m}\left[\sum_{\Delta = -1}^{1}\alp{m-1}{1}{\Delta}r_{0}^{\Delta}\right]&(&1\le m< M)\\
c_{M,0} &= \sum_{i=0}^{M-1}c_{i,0}\Omega_{M}r_{0}\left[\sum_{\Delta = -1}^{1}\left[\frac{1}{1 - r_{0}} - \frac{1}{r_{M}^{\Delta + 1}(1 - \frac{r_{0}}{r_{M}})}\right]\alp{i}{m-i}{\Delta}\right] && \\
&\qquad+ \sum_{u=1}^{M-1}\sum_{i=u}^{M-1}c_{i,u}\Omega_{M}r_{0}\left[\sum_{\Delta = -1}^{1}\alp{i}{M-i}{\Delta}\right[\frac{1}{(1 - r_{0})^{u+1}}&&\\
&\qquad\qquad\left.\left.- \frac{1}{r_{M}^{\Delta + 1}(1 - \frac{r_{0}}{r_{M}})^{u+1}}\right]\right]&&\\
c_{M,k} &= -\sum_{i=k}^{M-1}c_{i,k}\frac{\alp{i}{M - i}{1}}{\lambda_{M}} \\
&\qquad+ \sum_{u=k}^{M-1}\sum_{i=u}^{M-1}c_{i,u}\Omega_{M}r_{0}\left[\sum_{\Delta = -1}^{1}\alp{i}{m-i}{\Delta}\right[\frac{1}{(1 - r_{0})^{u+1-k}}&&\\
&\qquad\qquad\left.\left.- \frac{1}{r_{M}^{\Delta + 1}(1 - \frac{r_{0}}{r_{M}})^{u+1-k}}\right]\right]&(&1\le k<M)\\
\quad\qquad\  c_{M,M}&= \pi_{(M,j_{0})} - c_{M,0},&&
\end{empheq}
}
together with the usual balance equations and normalization constraint.
\end{theorem}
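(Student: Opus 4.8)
The plan is to reuse, essentially verbatim, the strong-induction scheme of Theorems \ref{THM:RESULT} and \ref{THM:RESULT2}, taking advantage of the fact that the clearing expectation $\rep{\ell}{j}{m}$ for a fixed phase $m$ is governed only by that phase's own parameters (via $r_m$, $\phi_m(\alpha_m)$, and $\Omega_m$). Because $r_0=r_1=\cdots=r_{M-1}$, the induction for every phase $0\le m\le M-1$ is identical to the proof of Theorem \ref{THM:RESULT2}: the base case gives $\pi_{(0,j)}=\pi_{(0,j_0)}r_0^{j-j_0}$, and for $1\le m\le M-1$ one applies Theorem \ref{thm:main}, substitutes the induction hypothesis, and evaluates the resulting $\ell$-sums with Lemma \ref{lem:seriesequal}, yielding $\pi_{(m,j)}=\sum_{k=0}^m c_{m,k}\binom{j-(j_0+1)+k}{k}r_0^{j-j_0}$ together with the stated recursions for the $c_{m,k}$. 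I would therefore dispatch these phases with a single paragraph pointing to Theorem \ref{THM:RESULT2}; nothing genuinely new occurs until the distinct base $r_M$ enters.

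The new work is the final inductive step treating phase $M$. By the induction hypothesis each phase $i<M$ has the base-$r_0$ binomial form, so Theorem \ref{thm:main} expresses $\pi_{(M,j)}$ as the direct contribution $\pi_{(M,j_0)}\lambda_M\rep{j_0+1}{j}{M}=\pi_{(M,j_0)}r_M^{j-j_0}$ plus, for each $i<M$ and each phase shift $\Delta\in\{-1,0,1\}$, a sum over the source level $\ell$ of $\pi_{(i,\ell)}\alp{i}{M-i}{\Delta}$ weighted by $\rep{\ell-1}{j}{M}$, $\rep{\ell}{j}{M}$, or $\rep{\ell+1}{j}{M}$ respectively. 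Since $r_M\neq r_0$, these inner $\ell$-sums are evaluated not by Lemma \ref{lem:seriesequal} but by the three identities of the lemma immediately preceding the theorem, each of which splits its output into one $r_M^{j-j_0}$ piece and a sum of $\binom{j-(j_0+1)+k}{k}r_0^{j-j_0}$ pieces for $0\le k\le u$; I would also record the $\ell=j_0$ boundary term of the $\Delta=1$ sum, which the identities (starting at $\ell=j_0+1$) do not cover.

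The step I expect to be the main obstacle is the bookkeeping that collapses all of these contributions into the claimed coefficients. After substituting the three identities, I would interchange orders of summation so that $k$ becomes the outer index and gather every term carrying $\binom{j-(j_0+1)+k}{k}r_0^{j-j_0}$; the resulting sums over $i$, over $u\ge k$, and over $\Delta$ must be regrouped into exactly the stated formulas for $c_{M,0}$ and for $c_{M,k}$ with $1\le k<M$, keeping careful track of the factors $\phi_M(\alpha_M)^{\Delta+1}$, the powers $r_M^{\Delta+1}$ in the denominators $1-\tfrac{r_0}{r_M}$, and the stray $-\tfrac{1}{\lambda_M}\alp{i}{M-i}{1}$ terms produced by the $\Delta=1$ identity. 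This is mechanical but error-prone, which is presumably why the authors omit it.

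Finally, I would read off $c_{M,M}$, the coefficient of the distinct base $r_M^{j-j_0}$, and verify the last equation. That coefficient collects the direct term $\pi_{(M,j_0)}$, the pure-$r_M$ pieces of the three identities, and---a feature absent from the all-equal case---the $\ell=j_0$ boundary term of the $\Delta=1$ transitions, namely $\sum_{i<M}\alp{i}{M-i}{1}c_{i,0}\rep{j_0+1}{j}{M}=\tfrac{r_M^{j-j_0}}{\lambda_M}\sum_{i<M}\alp{i}{M-i}{1}c_{i,0}$, using $\lambda_M\rep{j_0+1}{j}{M}=r_M^{j-j_0}$ and $\pi_{(i,j_0)}=c_{i,0}$. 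The clean point, which I would verify by a term-by-term pairing, is that each of these $r_M^{j-j_0}$ contributions is exactly the negative of a corresponding $k=0$ (i.e. $r_0^{j-j_0}$) contribution folded into $c_{M,0}$, so that the total $r_M^{j-j_0}$ coefficient equals $\pi_{(M,j_0)}-c_{M,0}=c_{M,M}$, as claimed. The attached balance equations and normalization constraint are then checked in the routine manner, the one new feature being that summing $\pi_{(M,j)}$ over $j$ contributes a $c_{M,M}r_M/(1-r_M)$ term alongside the binomial sums in $r_0$.
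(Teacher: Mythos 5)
Your plan follows exactly the route the paper intends (it omits this proof as being ``similar'' to those of Theorems \ref{THM:RESULT} and \ref{THM:RESULT2}): strong induction with Theorem \ref{thm:main}, reuse of Lemma \ref{lem:seriesequal} for phases $m\le M-1$ --- and your justification for that reuse, namely that the identities for phase $m$ involve only that phase's own quantities $r_m=r_0$, $\phi_m(\alpha_m)$, $\Omega_m$, so the lemma's nominal hypothesis $r_0=\cdots=r_M$ is immaterial, is the right one --- then the unnumbered lemma for phase $M$, the separate $\ell=j_0$ boundary term of the $\Delta=1$ transitions, and a final pairing argument identifying the coefficient of $r_M^{j-j_0}$.

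However, your third and fourth paragraphs cannot both be true, and resolving the clash exposes a discrepancy you would have to confront. The $u=0$ instance of the third identity contributes $-\binom{j-(j_0+1)}{0}\frac{r_0^{j-j_0}}{\lambda_M}$ per transition, so the collected coefficient of $r_0^{j-j_0}$ is the printed right-hand side of the $c_{M,0}$ equation \emph{plus} the extra term $-\frac{1}{\lambda_M}\sum_{i=0}^{M-1}c_{i,0}\alp{i}{M-i}{1}$, the exact analogue of the $-\frac{1}{\lambda_M}\sum_i c_{i,k}\alp{i}{M-i}{1}$ term that the printed $c_{M,k}$ equations for $1\le k<M$ do contain. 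So the regrouping does \emph{not} yield ``exactly the stated formulas'': the printed $c_{M,0}$ equation is missing this term. Your own pairing argument proves it must be present: the $r_M^{j-j_0}$ contributions are $\pi_{(M,j_0)}$, the pure-$r_M$ pieces (each the negative of the $k=0$ piece of the same identity), and the boundary term $+\frac{1}{\lambda_M}\sum_i c_{i,0}\alp{i}{M-i}{1}$; for the boundary term to pair off, its negative must be one of the $k=0$ contributions absorbed into $c_{M,0}$ --- precisely the missing term --- and otherwise $c_{M,M}=\pi_{(M,j_0)}-c_{M,0}$ fails. A concrete check: take $M=1$ with $\alp{0}{1}{1}>0$ and $\alp{0}{1}{0}=\alp{0}{1}{-1}=0$; the balance equation at $(1,j)$ for $j\ge j_0+2$ forces $c_{1,0}=\alp{0}{1}{1}c_{0,0}/\bigl(\mu_1(1-r_0)(r_0-r_1)\bigr)$, which equals the printed formula minus $\alp{0}{1}{1}c_{0,0}/\lambda_1$. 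So to complete the proof you must actually carry out the bookkeeping you deferred, obtain the corrected $c_{M,0}$ equation, and record this as an erratum to the statement (which also has minor typos, e.g. $\alp{i}{m-i}{\Delta}$ where $\alp{i}{M-i}{\Delta}$ is meant); as written, your proposal asserts two incompatible outcomes.
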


\section{Analysis of the M/M/1/clearing model}\label{sec:clearing}

In this section we pre\-sent an analysis of the M/M/1/clearing model Markov chain in order to prove Theorem \ref{thm:eij} (presented in Section \ref{sec:mainresult}), which we used in the proof of Theorems \ref{THM:RESULT} \ref{THM:RESULT2}, and \ref{THM:RESULT3}.  This analysis provides the framework on which the CAP method is built.

Like the ordinary M/M/1 model, the M/M/1/clearing model Markov chain (see Fig. \ref{fig:clearing}) has state space $\{0,1,2,3,\ldots\}$, with an arrival rate of $\lambda\equiv q(j,j+1)$ (for all $j\ge0$) and a departure rate of $\mu\equiv q(j,j-1)$ (for all $j\ge2$).  In addition, all nonzero states in the M/M/1/clearing model have an additional transition to state 0 representing a \emph{clearing} (also known as a catastrophe or disaster).  All clearing transitions occur with the same rate $\alpha\equiv q(j,0)$ (for all $j\ge2$), which we call the \emph{clearing rate}.  Note that from state 1, there are two ``ways'' of transitioning to state 0---a departure or a clearing---and hence, $q(1,0)=\mu+\alpha$.  We observe that each phase, $m$, of a class $\mathbb M$ Markov chain (for levels $j\ge j_0+1$) behaves like an M/M/1/clearing Markov chain, with clearing rate \[\alpha_{m} \equiv \sum_{i={m+1}}^{M}\sum_{\Delta=-1}^1 \alp m{i-m}\Delta,\] except with ``clearings'' transitioning to a different phase.

\begin{figure}
\begin{center}
\begin{tikzpicture}[scale=2.3]
\node[draw, circle, thick, name=0, minimum size=.6in] at (0,0) {{\small $0$}};
\node[draw, circle, thick, name=1, minimum size=.6in] at (1,0) {{\small $1$}};
\node[draw, circle, thick, name=2, minimum size=.6in] at (2,0) {{\small $2$}};
\node[draw, circle, thick, name=3, minimum size=.6in] at (3,0) {{\small $3$}};
\node[circle, thick, name=d, minimum size=.6in] at (4,0) {{$\cdots$}};
\draw[thick,->] (0) to[out=30,in=150] (1);
\draw[thick,->] (1) to[out=30,in=150] (2);
\draw[thick,->] (2) to[out=30,in=150] (3);
\draw[thick,->] (3) to[out=30,in=150] (d);
\draw[thick,->] (1) to[out=-150,in=-30] (0);
\draw[thick,->] (2) to[out=-150,in=-30] (1);
\draw[thick,->] (3) to[out=-150,in=-30] (2);
\draw[thick,->] (d) to[out=-150,in=-30] (3);
\draw[thick,->] (2) to[out=-120,in=-60]  (0);
\draw[thick,->] (3) to[out=-105,in=-75] (0);
\draw[ultra thick,->] (d) to[out=-90,in=-90] (0);
\foreach \x in {1,2,3} {
\node at (\x+.45,.3) {{\footnotesize $\lambda$}};
\node at (\x+.45,-.3) {{\footnotesize $\mu$}};
}
\node at (.45,.3) {{\footnotesize $\lambda$}};
\node at (.45,-.3) {{\footnotesize $\mu+\alpha$}};
\node at (1,-.78) {{\footnotesize $\alpha$}};
\node at (1.5,-1.19) {{\footnotesize $\alpha$}};
\end{tikzpicture}
\end{center}
\caption{Markov chain for the M/M/1/clearing model.  For any state $j\ge0$, there is a clearing rate with rate $\alpha$.  Note that the transition rate from state 1 to state 0 is $\mu+\alpha$ as either a departure or a clearing can cause this transition.  The thicker arrow denotes a \emph{set} of transitions.}
\label{fig:clearing}
\end{figure}
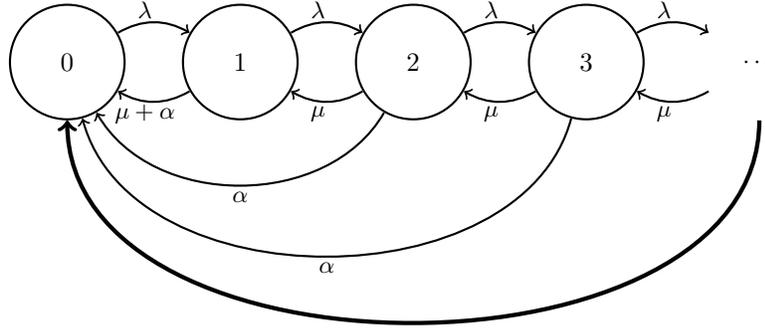

\subsection{Preliminary results on clearing models}

In this section we present  two preexisting results from the literature that will aid us in proving Theorem \ref{thm:eij}.  Our first result gives the limiting probability distribution of the M/M/1/clearing model:  see e.g., Corollary 4.2.2 of \cite{AbateWhitt}, as well as Exercise 10.7 of \cite{harchol2013performance}.

\begin{lemma}\label{lem:limprob}
In an M/M/1/clearing model with arrival, departure, and clearing rates $\lambda$, $\mu$, and $\alpha$, respectively, the limiting probability distribution is given by
\[\pi_j=(1-\rho\phi(\alpha))(\rho\phi(\alpha)^j),\] where $\rho=\lambda/\mu$ and $\phi(\cdot)$ is the Laplace transform of the busy period of an M/M/1 system: \[\phi(s)=\frac{s+\lambda+\mu-\sqrt{(s+\lambda+\mu)^2-4\lambda\mu}}{2\lambda}.\]
\end{lemma}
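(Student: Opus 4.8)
The plan is to verify the claimed geometric form directly through the global balance equations of the M/M/1/clearing chain, exploiting the fact that the interior equations constitute a constant-coefficient second-order recurrence whose unique summable solution is geometric. The appeal to the cited references is really a convenience; the computation itself is short.

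First I would record the balance equations. For every state $j\ge 1$, the only inflows come from $j-1$ (an arrival) and from $j+1$ (a departure), because every clearing lands in state $0$ rather than in $j-1$ or $j+1$. Since each state $j\ge1$ has total out-rate $\lambda+\mu+\alpha$, this yields the homogeneous recurrence
\[ \mu\pi_{j+1}-(\lambda+\mu+\alpha)\pi_j+\lambda\pi_{j-1}=0,\qquad j\ge1, \]
valid already at $j=1$: even though the pooled transition rate $1\to0$ is $\mu+\alpha$, the inflow to state $1$ is still only $\lambda\pi_0+\mu\pi_2$. The associated characteristic equation is $\mu r^2-(\lambda+\mu+\alpha)r+\lambda=0$.

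Next I would solve this quadratic and select the correct root. Its roots multiply to $\lambda/\mu=\rho$, and the smaller one is
\[ r=\frac{(\lambda+\mu+\alpha)-\sqrt{(\lambda+\mu+\alpha)^2-4\lambda\mu}}{2\mu}. \]
Writing $\rho\phi(\alpha)=\tfrac{\lambda}{\mu}\phi(\alpha)$ and substituting $s=\alpha$ into the definition of $\phi$ shows at once that $r=\rho\phi(\alpha)$, so the other root is $1/\phi(\alpha)>1$. Since $\rho\phi(\alpha)<1$ (as already noted for the bases $r_m$) while the second root exceeds $1$, summability of $\{\pi_j\}$ forces the coefficient of the large root to vanish, leaving $\pi_j=C\bigl(\rho\phi(\alpha)\bigr)^{j}$ for all $j\ge0$.

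Finally I would settle the boundary and the constant. The only remaining balance equation is the one at state $0$, namely $\lambda\pi_0=(\mu+\alpha)\pi_1+\alpha\sum_{j\ge2}\pi_j$; substituting the geometric form, summing the tail, and clearing the factor $1-r$ reduces this identity to $\lambda=(\lambda+\mu+\alpha)r-\mu r^2$, which is exactly the characteristic equation and therefore holds automatically (it is the customary redundant balance equation). Normalizing via $\sum_{j\ge0}Cr^{j}=C/(1-r)=1$ gives $C=1-\rho\phi(\alpha)$, which completes the proof. The one step that demands genuine care is the root selection---verifying algebraically that the summable root coincides with $\rho\phi(\alpha)$---together with the boundary bookkeeping, where the pooled rate $\mu+\alpha$ out of state $1$ and the aggregated clearing inflow $\alpha\sum_{j\ge2}\pi_j$ must be tracked correctly; everything else is routine geometric-series algebra.
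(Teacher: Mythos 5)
Your argument is correct, but it takes a genuinely different route from the paper, which does not prove the lemma at all: it simply cites Corollary 4.2.2 of Abate and Whitt. Your derivation is the elementary, self-contained alternative: the interior balance equations $\mu\pi_{j+1}-(\lambda+\mu+\alpha)\pi_j+\lambda\pi_{j-1}=0$ (correctly valid from $j=1$ onward, since clearings bypass state $1$), the algebraic identification of the small characteristic root $\bigl((\lambda+\mu+\alpha)-\sqrt{(\lambda+\mu+\alpha)^2-4\lambda\mu}\,\bigr)/(2\mu)$ with $\rho\phi(\alpha)$, elimination of the large root $1/\phi(\alpha)$ by summability and positivity, and the check that the state-$0$ equation collapses to the characteristic equation, so that it is exactly the expected redundant balance equation. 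All of these steps check out, including the boundary bookkeeping with the pooled rate $\mu+\alpha$ out of state $1$ and the clearing inflow $\alpha\sum_{j\ge 2}\pi_j$ into state $0$. What your route buys is self-containedness and a transparent, purely algebraic reason why $\phi(\alpha)$ appears (it is just a root of the quadratic); what the citation buys the paper is brevity and consistency with its probabilistic viewpoint, in which $\phi$ is meaningful as the busy-period Laplace transform---the interpretation that the paper's Lemmas \ref{lem:pab} and \ref{lem:eij} actually exploit. Two small points would make your write-up airtight. First, the root selection need not lean on the paper's observation that $r_m<1$: it is immediate from your own setup, since $f(r)=\mu r^2-(\lambda+\mu+\alpha)r+\lambda$ satisfies $f(0)=\lambda>0$ and $f(1)=-\alpha<0$ when $\alpha>0$, so one root lies in $(0,1)$ and the other exceeds $1$. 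Second, in the degenerate case $\alpha=0$ (not excluded by the lemma's wording, where ergodicity then forces $\rho<1$) the second root equals $1$ rather than exceeding $1$; the summability argument still discards it, but your phrase ``$1/\phi(\alpha)>1$'' is strictly true only for $\alpha>0$.
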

\begin{proof}
See the proof of Corollary 4.2.2 of \cite{AbateWhitt}.
\end{proof}

The next result is also known, and gives an expression for a probability that is useful in computing values of the form $\rew \ell j$ in the M/M/1 clearing model.  A similar result, presented in the context of Brownian motion, is given in Problems 22 and 23 from Chapter 7 of \cite{karlin1975first}.
\begin{lemma}\label{lem:pab}
In an M/M/1/clearing model with arrival, departure, and clearing rates $\lambda$, $\mu$, and $\alpha$, respectively, the probability that one reaches state $j>0$ before state $0$, given that one starts in state $\ell>0$, is given by
\[p_{\ell\to j}=\begin{cases}
\displaystyle{\frac{(\rho\phi(\alpha))^{j-\ell}(1-(\rho\phi(\alpha)^2)^\ell)}{1-(\rho\phi(\alpha)^2)^j}}&\mbox{if }\ell\le j\\
\phi(\alpha)^{\ell-j}&\mbox{if }\ell\ge j.
\end{cases}\]
\end{lemma}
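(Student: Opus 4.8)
The plan is to treat $h(\ell)\equiv p_{\ell\to j}$ as a function of the starting state and to exploit the fact that it is harmonic for the embedded jump chain away from the two absorbing targets. First I would fix the target $j>0$ and impose the boundary values $h(0)=0$ and $h(j)=1$. A first-transition analysis on the jump chain then shows that for every $\ell\ge 1$ with $\ell\neq j$,
\[
(\lambda+\mu+\alpha)\,h(\ell)=\lambda\,h(\ell+1)+\mu\,h(\ell-1),
\]
where the clearing term $\alpha\,h(0)$ has been dropped because $h(0)=0$; this single recurrence also covers $\ell=1$, since the only effect of the merged rate $q(1,0)=\mu+\alpha$ is to contribute $(\mu+\alpha)h(0)=0$. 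The associated characteristic equation $\lambda x^2-(\lambda+\mu+\alpha)x+\mu=0$ has discriminant $(\lambda-\mu)^2+2(\lambda+\mu)\alpha+\alpha^2>0$, so it has two distinct positive roots whose product is $\mu/\lambda=1/\rho$; the smaller root is exactly $\phi(\alpha)$ by the definition of $\phi$, and hence the larger root is $(\rho\phi(\alpha))^{-1}$. Thus every solution has the form $h(\ell)=A\,\phi(\alpha)^{\ell}+B\,(\rho\phi(\alpha))^{-\ell}$, and it remains to pin down the constants in each of the two regions $\ell\ge j$ and $1\le \ell\le j$ using the appropriate side conditions.

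For the region $\ell\ge j$ I would argue probabilistically rather than analytically, which simultaneously yields the answer and rules out the exponentially growing mode. Starting from any state $n\ge 2$, the chain reaches $n-1$ before $0$ exactly when the pure M/M/1 first passage from $n$ to $n-1$ (which is distributed as the busy period $B$, by spatial homogeneity) completes before the independent $\mathrm{Exp}(\alpha)$ clearing clock rings; this probability is $\mathbb{E}[e^{-\alpha B}]=\phi(\alpha)$, independently of $n$. Applying the strong Markov property successively to the first passages $\ell\to\ell-1\to\cdots\to j$ (each starting from a state at least $2$, since $j\ge1$) gives $p_{\ell\to j}=\phi(\alpha)^{\ell-j}$ for $\ell\ge j$, which is the claimed formula. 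Equivalently, one may keep the analytic route: since $\rho\phi(\alpha)<1$ by Lemma \ref{lem:limprob}, the mode $(\rho\phi(\alpha))^{-\ell}$ diverges, so $B=0$ is forced by $h(\ell)\to 0$, and $h(j)=1$ then gives $A=\phi(\alpha)^{-j}$.

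For the region $1\le\ell\le j$ this is a genuine two-point boundary value problem, so I would impose both $h(0)=0$ and $h(j)=1$ on the general solution. The first condition gives $B=-A$, and the second determines $A$, yielding
\[
h(\ell)=\frac{\phi(\alpha)^{\ell}-(\rho\phi(\alpha))^{-\ell}}{\phi(\alpha)^{j}-(\rho\phi(\alpha))^{-j}}.
\]
The final step is purely algebraic: factoring $\rho^{-\ell}\phi(\alpha)^{-\ell}$ out of the numerator and $\rho^{-j}\phi(\alpha)^{-j}$ out of the denominator rewrites this as $(\rho\phi(\alpha))^{j-\ell}\,(1-(\rho\phi(\alpha)^2)^{\ell})/(1-(\rho\phi(\alpha)^2)^{j})$, which matches the stated expression. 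The two formulas agree at $\ell=j$ (both equal $1$), so the case split is consistent.

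I expect the main obstacle to be the region $\ell\ge j$: the recurrence alone does not determine the solution there without a boundary condition at infinity, and the cleanest way to supply one is to show $h(\ell)\to 0$, equivalently to discard the growing mode. Establishing this rigorously requires either the probabilistic busy-period argument above---which hinges on identifying $\phi(\alpha)$ with $\mathbb{E}[e^{-\alpha B}]$ and invoking translation invariance together with the strong Markov property---or the a priori bound $\rho\phi(\alpha)<1$ from Lemma \ref{lem:limprob}. Verifying the nondegeneracy of the two characteristic roots (distinctness, and $\rho\phi(\alpha)<1$) is the other point that must be checked, but it follows directly from $\alpha>0$ together with Lemma \ref{lem:limprob}.
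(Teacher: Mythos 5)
Your proof is correct, and for the half that does the real work ($\ell<j$) it takes a genuinely different route from the paper. For $\ell\ge j$ your argument coincides with the paper's: both express the event as a race between $\ell-j$ i.i.d.\ M/M/1 busy periods and the independent $\ed(\alpha)$ clearing clock, and evaluate the probability as $\phi(\alpha)^{\ell-j}$ via the transform-as-probability identity (Appendix \ref{app:alternate}). For $\ell<j$, however, the paper never writes a difference equation: it introduces two free-running Poisson processes, the passage times $\tau_0$ and $\tau_j$, computes $\ep[e^{-\alpha\tau_0}]=\phi(\alpha)^{\ell}$ and $\ep[e^{-\alpha\tau_j}]=(\rho\phi(\alpha))^{j-\ell}$ (via the reversed-role transform $\eta=\rho\phi$), splits each expectation on the events $\{\tau_0<\tau_j\}$ and $\{\tau_j<\tau_0\}$, and solves the resulting $2\times 2$ linear system in $p_{\ell\to j}$ and $p_{\ell\not\to j}$. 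You instead do lattice-harmonic analysis: first-step analysis with killing gives the recurrence, and the paper's closed form for $\phi(\alpha)$ is literally the quadratic formula for the smaller root of $\lambda x^2-(\lambda+\mu+\alpha)x+\mu=0$, so the two modes are $\phi(\alpha)$ and $(\rho\phi(\alpha))^{-1}$, and the boundary conditions $h(0)=0$, $h(j)=1$ finish the job. Your route is more elementary and self-contained, and it explains structurally why exactly these two geometric modes appear; the paper's route stays purely probabilistic and yields as by-products the duality $\eta=\rho\phi$ and the first-passage interpretation of $\phi(\alpha)^{\ell}$ and $(\rho\phi(\alpha))^{j-\ell}$, which is the intuition the rest of Section \ref{sec:clearing} runs on. Two small points to tighten: (i) your two-point boundary-value problem is uniquely solvable only because $\phi(\alpha)^{j}\neq(\rho\phi(\alpha))^{-j}$, i.e.\ $(\rho\phi(\alpha)^2)^{j}\neq 1$; this follows from $\rho\phi(\alpha)<1$ (Lemma \ref{lem:limprob}) together with $\phi(\alpha)\le 1$, and it should be stated, since it is exactly the nonvanishing of the denominator in the final formula; (ii) for $\ell\ge j$ you do not actually need $h(\ell)\to 0$: boundedness of $h$ (it is a probability) already forces the coefficient of the growing mode $(\rho\phi(\alpha))^{-\ell}$ to vanish, so the analytic variant is less delicate than you suggest.
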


\begin{proof}
%In this section, we derive $p_{\ell\to j}$, the probability that one next reaches state $b$ before state $0$ in an M/M/1/clearing model, given that one starts in state $\ell<j$.  We can think of this as a sort of gambler's ruin problem in a \emph{clearing model}, where a gambler can lose their entire fortune via a clearing (catastrophe) in a single game with probability $\alpha/(\lambda+\alpha+\mu)$.

First, note that in the degenerate case where $\ell=j$, we are already at state $j$ from the start, and so we reach state $j$ before reaching state $0$ \emph{surely}, yielding $p_{\ell\to j}=1$.  Substituting $\ell=j$ in either branch of the claimed expression for $p_{\ell\to j}$ yields 1, validating the claim in this case.

Next, we consider the case where $\ell>j$, which will be the simpler of the two remaining cases.  In this case, $p_{\ell\to j}$ can be viewed as the probability that the sum of $\ell-j$ independent M/M/1 busy periods (without clearing), $B_1,B_2,\ldots,B_{\ell-j}$, do not exceed the exponentially distributed ``clearing'' random variable $\zeta_\alpha$: \[p_{\ell\to j}=\mathbb P\left(\sum_{n=1}^{j-\ell}B_n\le \zeta_\alpha\right)=\mathbb{E}\left[e^{-\alpha \sum_{n=1}^{j - \ell}B_{n}}\right] = \phi(\alpha)^{j - \ell}\] %Moreover, since $\zeta_\alpha$ is memoryless, this can also be thought of as the probability that \emph{each} busy period does not exceed an independent copy of $\zeta_\alpha$, that is, \[p_{\ell\to j}=\mathbb P(B_1\le \zeta_\alpha)^{\ell-j}=\phi(\alpha)^{\ell-j},\]
as claimed, with the next-to-last equality following from the alternate interpretation of the Laplace Transform (see Appendix \ref{app:alternate} for details).   %Consequently, when $b>a$, we have \[\rew \ell j=\frac{\rho\phi(\alpha)^{i-j+1}(1-(\rho\phi(\alpha)^2)^j)}{\lambda(1-\rho\phi(\alpha)^2)}.\]

Now let us consider the remaining case where $\ell<j$.  In this case, it will be helpful to consider two Poisson processes, one associated with arrivals, occurring with rate $\lambda$, and the other associated with departures, occurring with rate $\mu$.  Departures can happen even at state 0, although at state 0 departures \emph{do not cause a change of state}.  Let $N_A(t)$ and $N_D(t)$ be the number of such arrivals and departures during time interval $[0,t]$, assuming that we are in state $\ell$ at time 0.

Next, let $\tau_0=\inf\{t\colon \ell+(N_A(t)-N_D(t))=0\}$ be the first time after 0 until we have $\ell$ departures in excess of arrivals, and let $\tau_j=\inf\{t\colon \ell+(N_A(t)-N_D(t))=j\}$ be the first time after 0 until we have $j-\ell$ arrivals in excess of departures.  Although there may be positive probability that one of of these two events may never happen (i.e., $\max\{\tau_0,\tau_j\}=+\infty$), at least one of these events will happen almost surely.  Moreover, if either of these events happens before a clearing, which will occur at time $\zeta_\alpha\sim\ed(\alpha)$ (independent of both $\tau_0$ and $\tau_j$), then $\tau_0$ and $\tau_j$ describe the first time that we will reach state $0$ and $j$, respectively.

Given this notation, we can express $p_{\ell\to j}$, the probability that one next reaches state $j>\ell$ before state $0$ in an M/M/1/clearing model, given that one starts in state $\ell>0$, by
\begin{align*}
p_{\ell\to j}=\pr(\tau_j\le\min\{\tau_0,\zeta_\alpha\})=\ep[e^{-\alpha\tau_0}\cdot I\{\tau_0<\tau_j\}],
\end{align*}
where $I\{\cdot\}$ is the indicator function.  Similarly, if we let $p_{\ell\not\to j}$ be the probability that we reach 0---via departures, rather than via a clearing---before reaching $j$ and before a clearing, we have
\[p_{\ell\not\to j}=\pr(\tau_0\le\min\{\tau_j,\zeta_\alpha\})=\ep[e^{-\alpha\tau_j}\cdot I\{\tau_j<\tau_0\}].\]

At this point, it will be useful to compute the quantities $\ep[e^{-\alpha\tau_0}]$ and $\ep[e^{-\alpha\tau_j}]$.  Observe that $\tau_0$ is the time until we first have $\ell$ departures in excess of arrivals.  We can think of each time ``departures minus arrivals'' increments by one as the completion of an M/M/1 busy period.  Hence, $\tau_0$ corresponds to the time until we have completed $\ell$ consecutive independent busy periods.  Meanwhile, $\tau_j$ is the time until we first have $j-\ell>0$ arrivals in excess of departures.  Just as we can think each time ``departures minus arrivals'' increments by one as the completion of an M/M/1 busy period, we can also think of the each time ``arrivals minus departures'' increments by one as the completion of an M/M/1 busy period where we think of arrivals as ``departures'' occurring with rate $\lambda$ and departures as ``arrivals'' occurring with rate $\mu$.  Hence, $\tau_j$ corresponds to the time until we have completed $j-\ell$ consecutive independent busy periods with arrival rate $\mu$ and departure rate $\lambda$.  Consequently \[\ep[e^{-\alpha\tau_0}]=\phi(\alpha)^\ell,\qquad\ep[e^{-\alpha\tau_j}]=\eta(\alpha)^{j-\ell},\] where $\phi(\cdot)$ and $\eta(\cdot)$ are the Laplace transforms of the M/M/1 busy periods with arrival and departure rate pairs $(\lambda,\mu)$ and $(\mu,\lambda)$, respectively.  We observe that for all $s>0$,
\begin{align*}
\eta(s)&=\frac{s+\mu+\lambda-\sqrt{(s+\mu+\lambda)^2-4\mu\lambda}}{2\mu}\\
&=\left(\frac{\lambda}{\mu}\right)\left(\frac{s+\lambda+\mu-\sqrt{(s+\lambda+\mu)^2-4\lambda\mu}}{2\lambda}\right)\\
&=\rho\phi(s).
\end{align*}
%where $\phi(s)$ is the Laplace transform of an M/M/1 busy period with arrival rate $\lambda$ and departure rate $\mu$.
Note that in the case that $\rho\neq1$, we must have $\eta(0)\neq\phi(0)$,  and in particular one of these transforms will not evaluate to 1.  This is not a problem as if $\rho<1$ (respectively, $\rho>1$), the underlying random variable of $\eta$ (respectively, $\phi$) has positive probability mass at infinity, and will thus not satisfy the ``usual'' condition of Laplace transforms evaluating to 1 at 0.

We proceed to use these expectations to determine $p_{\ell\to j}$:
\begin{align*}
\phi(\alpha)^\ell&=\ep[e^{-\alpha\tau_0}]\\
&=\ep[e^{-\alpha\tau_0}\cdot I\{\tau_0<\tau_j\}]+\ep[e^{-\alpha\tau_0}\cdot I\{\tau_j<\tau_0\}]\\
&=\ep[e^{-\alpha\tau_0}\cdot I\{\tau_0<\tau_j\}]+\phi(\alpha)^j\cdot\ep[e^{-\alpha\tau_j}\cdot I\{\tau_j<\tau_0\}]\\
&=p_{\ell\not\to j}+\phi(\alpha)^j p_{\ell\to j},\\
(\rho\phi(\alpha))^{j-\ell}&=\ep[e^{-\alpha\tau_j}]\\
&=\ep[e^{-\alpha\tau_j}\cdot I\{\tau_0<\tau_j\}]+\ep[e^{-\alpha\tau_j}\cdot I\{\tau_j<\tau_0\}]\\
&=(\rho\phi(\alpha))^j\cdot\ep[e^{-\alpha\tau_0}\cdot I\{\tau_0<\tau_j\}]+\ep[e^{-\alpha\tau_j}\cdot I\{\tau_j<\tau_0\}]\\
&=(\rho\phi(\alpha))^j(p_{\ell\not\to j})+p_{\ell\to j}.
\end{align*}
We justify $\ep[e^{-\alpha\tau_0}\cdot I\{\tau_j<\tau_0\}]=\phi(\alpha)^j\cdot \ep[e^{-\alpha\tau_j}\cdot I\{\tau_j<\tau_0\}]$ by observing that given that $\tau_j<\tau_0$, we reach state $j$ before state $0$ (ignoring clearings), so we can only reach state 0 by performing $j$ consecutive busy periods after reaching $j$.  We justify the analogous equality $\ep[e^{-\alpha\tau_j}\cdot I\{\tau_0<\tau_j\}]=(\rho\phi(\alpha))^j\cdot\ep[e^{-\alpha\tau_0}\cdot I\{\tau_0<\tau_j\}]$ by observing that given that $\tau_0<\tau_j$, we reach state $0$ before state $j$ (ignoring clearings), so we can only reach state $j$ by performing $j$ consecutive ``busy'' periods in an M/M/1 model with arrival rate $\mu$ and departure rate $\lambda$.

We now have a system of two linear equations in the two unknowns, $p_{\ell\to j}$ and $p_{\ell\not\to j}$.  Solving the system for $p_{\ell\to j}$ and simplifying, we find that \[p_{\ell\to j}=\frac{(\rho\phi(\alpha))^{j-\ell}-(\rho\phi(\alpha))^j\phi(\alpha)^\ell}{1-(\rho\phi(\alpha))^j\phi(\alpha)^j}=\frac{(\rho\phi(\alpha))^{j-\ell}(1-(\rho\phi(\alpha)^2)^\ell)}{1-(\rho\phi(\alpha)^2)^j},\] which proves the claim.
%We proceed to solve the system for $p_{\ell\to j}$ by substituting the expression for $p_{\ell\not\to j}$ implied by the first equation into the expression for $p_{\ell\to j}$ implied by the second equation:
%\begin{align*}
%p_{\ell\to j}&=(\rho\phi(\alpha))^{j-\ell}-(\rho\phi(\alpha))^j(p_{\ell\not\to j})\\
%&=(\rho\phi(\alpha))^{j-\ell}-(\rho\phi(\alpha))^j(\phi(\alpha)^\ell-\phi(\alpha)^j p_{\ell\to j})\\
%&=(\rho\phi(\alpha))^{j-\ell}-(\rho\phi(\alpha))^j\phi(\alpha)^\ell+(\rho\phi(\alpha))^j\phi(\alpha)^j p_{\ell\to j}\\
%&=\frac{(\rho\phi(\alpha))^{j-\ell}-(\rho\phi(\alpha))^j\phi(\alpha)^\ell}{1-(\rho\phi(\alpha))^j\phi(\alpha)^j}\\
%&=\frac{(\rho\phi(\alpha))^{j-\ell}(1-(\rho\phi(\alpha)^2)^\ell)}{1-(\rho\phi(\alpha)^2)^j},
%\end{align*}
%Note that this formula also holds when $\ell=j$, as in this case, $p_{\ell\to j}$ evaluates to 1, as it should, because if we start in $b>0$, we are trivially guaranteed to reach $b$ before reaching $0$.  Moreover, this formula holds in the case where $\alpha=0$ and $\lambda<\mu$, which gives us the original gambler's ruin (since there is no clearing) by observing that $\phi(0)=1$ when $\lambda<\mu$
\end{proof}

\subsection{Applying clearing model analysis toward proving Theorem 2}

We now use Lemmas \ref{lem:limprob} and \ref{lem:pab} to compute $\rew \ell j$ in an M/M/1/clearing model, where $A$ is the set of nonzero states.  This result is presented in Lemma \ref{lem:eij}.  Finally, we will recast Lemma \ref{lem:eij} in the context of class $\mathbb M$ Markov chains, allowing us to prove Theorem \ref{thm:eij} from Section \ref{sec:mainresult}.

\begin{lemma}\label{lem:eij}
In an M/M/1/clearing model with arrival, departure, and clearing rates $\lambda$, $\mu$, and $\alpha$, respectively, if $A=\{1,2,3,\ldots\}$ denotes the set of nonzero states of the state space of the underlying Markov chain, then
\[\rew \ell j=\begin{cases}
\displaystyle{\frac{(\rho\phi(\alpha))^{j-\ell+1}\left(1-(\rho\phi(\alpha)^2)^\ell\right)}{\lambda(1-\rho\phi(\alpha)^2)}}&\mbox{if }\ell\le j\\ \\
\displaystyle{\frac{\rho\phi(\alpha)^{\ell-j+1}\left(1-(\rho\phi(\alpha)^2)^j\right)}{\lambda(1-\rho\phi(\alpha)^2)}}&\mbox{if }\ell\ge j.
\end{cases}\]
\end{lemma}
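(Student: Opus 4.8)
The plan is to reduce the computation of the general occupation time $\rew{\ell}{j}$ to two simpler ingredients: the hitting probability $p_{\ell\to j}$ already supplied by Lemma \ref{lem:pab}, and a single ``diagonal'' occupation time $\rew{1}{j}$ that I will extract from the stationary distribution of Lemma \ref{lem:limprob} via a renewal-reward argument. The starting observation is a decomposition that follows from the strong Markov property: writing $\tau_A$ for the first hitting time of state $0$ (the exit from $A$) and $\sigma_j$ for the first hitting time of state $j$, the total time spent in $j$ before leaving $A$ vanishes unless $\sigma_j<\tau_A$, while on the event $\{\sigma_j<\tau_A\}$ the chain sits at state $j$ at time $\sigma_j$, so the expected remaining occupation time of $j$ equals $\rew{j}{j}$ regardless of the starting state. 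Hence $\rew{\ell}{j}=p_{\ell\to j}\,\rew{j}{j}$ for all $\ell,j\ge 1$, and $p_{\ell\to j}=\mathbb P_\ell(\sigma_j<\tau_A)$ is exactly the quantity computed in Lemma \ref{lem:pab}.

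Next I would pin down $\rew{1}{j}$ using renewal-reward theory with regenerations at visits to state $0$. Two structural facts make this clean: the only transition out of state $0$ is to state $1$ at rate $\lambda$, so every re-entry into $A$ occurs at state $1$ and the expected sojourn at $0$ per cycle is $1/\lambda$. Consequently the expected time spent in a state $j\ge 1$ during one cycle is precisely $\rew{1}{j}$, whereas the expected time in state $0$ per cycle is $1/\lambda$. The renewal-reward theorem then gives $\rew{1}{j}/(1/\lambda)=\pi_j/\pi_0$, and substituting the geometric stationary distribution $\pi_j=(1-\rho\phi(\alpha))(\rho\phi(\alpha))^{j}$ from Lemma \ref{lem:limprob} yields $\rew{1}{j}=(\rho\phi(\alpha))^{j}/\lambda$. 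Ergodicity guarantees that state $0$ is positive recurrent, so the mean cycle length is finite and this application is legitimate.

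Finally I would bootstrap from $\ell=1$. Specializing the decomposition gives $\rew{1}{j}=p_{1\to j}\,\rew{j}{j}$, so $\rew{j}{j}=\rew{1}{j}/p_{1\to j}$ (the denominator is strictly positive since $j$ is reachable from $1$ before $0$), and therefore $\rew{\ell}{j}=(p_{\ell\to j}/p_{1\to j})\,\rew{1}{j}$. It remains to insert the two-case expression for $p_{\ell\to j}$ from Lemma \ref{lem:pab} together with $\rew{1}{j}=(\rho\phi(\alpha))^{j}/\lambda$ and simplify. When $\ell\le j$ the ratio $p_{\ell\to j}/p_{1\to j}$ telescopes to $(\rho\phi(\alpha))^{1-\ell}(1-(\rho\phi(\alpha)^2)^\ell)/(1-\rho\phi(\alpha)^2)$, and when $\ell\ge j$ one first obtains $\rew{j}{j}=\rho\phi(\alpha)(1-(\rho\phi(\alpha)^2)^j)/(\lambda(1-\rho\phi(\alpha)^2))$ and multiplies by $p_{\ell\to j}=\phi(\alpha)^{\ell-j}$; both reductions reproduce the two branches of the claimed formula. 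The main obstacle is the renewal-reward step rather than the final algebra: the crux is correctly identifying the regeneration structure---that all entries into $A$ occur at state $1$ and that the mean time at $0$ per cycle is $1/\lambda$---after which the remaining manipulations are routine geometric-series simplifications.
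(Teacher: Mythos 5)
Your proof is correct, and it follows the same overall skeleton as the paper's: reduce everything to $\rew 1j$ via hitting probabilities (Lemma \ref{lem:pab}) and compute $\rew 1j$ by renewal--reward using the stationary distribution (Lemma \ref{lem:limprob}). But you differ in two genuinely useful details. First, your decomposition is the single universal identity $\rew \ell j = p_{\ell\to j}\,\rew jj$ for all $\ell,j\ge 1$, obtained by applying the strong Markov property at the first hitting time of $j$; this handles both cases $\ell\le j$ and $\ell\ge j$ at once and does not use the birth--death structure at all. The paper instead uses two different decompositions: for $\ell\le j$ it writes $\rew 1j = p_{1\to \ell}\,\rew \ell j$, which relies on the skip-free-upward property (the path from $1$ to $j$ must pass through $\ell$, spending no time at $j$ beforehand), and only for $\ell\ge j$ does it condition on hitting $j$ as you do. Second, in the renewal--reward step you take the ratio $\rew 1j/(1/\lambda)=\pi_j/\pi_0$, so the unknown mean cycle length cancels; the paper instead computes the expected clearing-model busy period $\ep[B_C]=(1-\phi(\alpha))/\alpha$ explicitly and then needs the algebraic identity $\left(\frac{1-\phi(\alpha)}{\alpha}+\frac{1}{\lambda}\right)(1-\rho\phi(\alpha))=\frac1\lambda$ (which it justifies, in its second argument, by essentially your observation that the expected time at $0$ per cycle is $1/\lambda$). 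What each buys: your route is shorter, uniform across the two cases, and more robust since neither step depends on skip-freeness; the paper's route produces the closed form for $\ep[B_C]$ along the way, which has some independent interest. Your final algebra checks out against both branches of the claimed formula.
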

\begin{proof}
We first consider the case where $\ell\le j$.  We claim that
\begin{align}\label{eq:probclear}
\rew 1j=(p_{1\to \ell})\rew \ell j,
\end{align}
recalling that $p_{1\to \ell}$ is the probability that one reaches state $\ell$ before state $0$ given initial state $1$.  Equivalently, in our setting, we may interpret $p_{1\to \ell}$ to be the probability that one reaches state $\ell$ before leaving $A$, given initial state $1$, as $0$ is the only state not in $A$.  The claim in Equation \eqref{eq:probclear} follows from conditional expectation and the fact that given that we start in state $1$, we either
\begin{itemize}
\item reach state $\ell$ before leaving $A$, in which case the the expected cumulative time spent in state $j$ before leaving $A$ is $\rew \ell j$ (note that no time is spent in $j$ before reaching $\ell$, as $\ell\le j$),
\item or we do not reach state $\ell$ before leaving $A$, in which case we also do not reach state $j$, and hence we spend 0 time in state $j$ before leaving $A$.
\end{itemize}

From Lemma \ref{lem:pab}, we know that for $\ell\le j$, we have \begin{align}\label{eq:pc}p_{\ell\to j}&=\frac{(\rho\phi(\alpha))^{j-\ell}\left(1-(\rho\phi(\alpha)^2)^\ell\right)}{1-(\rho\phi(\alpha)^2)^j}.\end{align}  Hence, in order to determine $\rew \ell j$ from Equation \eqref{eq:probclear}, we need only determine $\rew 1j$.  We compute this quantity via the renewal reward theorem.  Let us earn reward in state $j$ at rate 1, and consider a cycle from state 0 until one returns to 0 again (after leaving 0).  We also use the fact from Lemma \ref{lem:limprob} that the limiting probability of being in state $j$ in an M/M/1/clearing model is given by $(1-\rho\phi(\alpha))(\rho\phi(\alpha))^j$.  Hence, by the renewal reward theorem, we have
\begin{align}\label{eq:renewclear}
\frac{\rew1j}{\ep[B_C]+1/\lambda}=(1-\rho\phi(\alpha))(\rho\phi(\alpha))^j,
\end{align}
where $B_C$ denotes the busy period of an M/M/1/clearing model.  To determine $\ep[B_C]$, observe that $B_C=\min\{B,\zeta_\alpha\}$, where $B$ is an independent random variable distributed like the busy period of an M/M/1 model \emph{without} clearing, and $\zeta_\alpha\sim\ed(\alpha)$ is an exponentially distributed clearing time.  Taking the expectation, we have
\begin{align*}
\ep[B_C]&=\ep[\min(B,\zeta_\alpha)]=\int_0^\infty\mathbb P(B> t)\mathbb P(\zeta_\alpha>t)\,dt=\int_0^\infty\mathbb P(B\ge t)e^{-\alpha t}\,dt\\
&=\frac1{\alpha}\int_0^\infty\mathbb P(B\ge t)\left(\alpha e^{-\alpha t}\right)\,dt=\frac{\mathbb P(B>\zeta_\alpha)}{\alpha}=\frac{1-\mathbb P(B\le\zeta_\alpha)}{\alpha}\\
&=\frac{1-\phi(\alpha)}{\alpha},
\end{align*}
where the final step follows from an alternate interpretation of the Laplace transform (see Appendix \ref{app:alternate} for details), noting that $\phi(\cdot)$ is the Laplace transform of $B$.

Returning to Equation \eqref{eq:renewclear}, we find that
\begin{align}\label{eq:simple}
\rew 1j&=\left(\frac{1-\phi(\alpha)}{\alpha}+\frac1\lambda\right)(1-\rho\phi(\alpha))(\rho\phi(\alpha))^j=\frac{(\rho\phi(\alpha))^j}{\lambda},
\end{align}
where we make use of the identity \[\left(\frac{1-\phi(\alpha)}{\alpha}+\frac1\lambda\right)(1-\rho\phi(\alpha))=\frac1\lambda\] in our simplification.  This identity can be verified algebraically by using the explicit form of $\phi(s)$.  Alternatively, let $\ep_0[T_0]$ be the expected duration of time spent in state $0$ in a cycle starting from state 0, and ending with a return to state 0 from a nonzero state.  Then by the renewal reward theorem, \[\ep_0[T_0]=\left(\ep[B_C]+\frac1\lambda\right)(1-\rho\phi(\alpha))=\left(\frac{1-\phi(\alpha)}\alpha+\frac1\lambda\right)(1-\rho\phi(\alpha)).\] We can also observe that during such a cycle, the only time spent in state 0 is during the initial residence, as a revisit to state 0 ends the cycle, so $\ep_0[T_0]=1/\lambda$.  Setting these quantities equal to one another  yields the claimed identity directly.

We proceed to use Equation \eqref{eq:probclear} in determining $\rew \ell j$ (in the case where $\ell \le j$), by substituting in values from Equations \eqref{eq:pc} and \eqref{eq:simple}:
\begin{align*}
\rew \ell j&=\frac{\rew1j}{p_{1\to \ell}}=\left(\frac{(\rho\phi(\alpha))^j}{\lambda}\right)\left(\frac{1-(\rho\phi(\alpha)^2)^\ell}{(\rho\phi(\alpha))^{\ell-1}(1-\rho\phi(\alpha)^2)}\right)\\&=\frac{(\rho\phi(\alpha))^{j-\ell+1}\left(1-(\rho\phi(\alpha)^2)^\ell\right)}{\lambda(1-\rho\phi(\alpha)^2)}.
\end{align*}

Next, we consider the case where $\ell\ge j$ (note that the two branches in the claimed expression coincide when $\ell=j$).  We again use conditional expectation, this time obtaining
\begin{align*}
\rew \ell j&=\left(p_{\ell \to j}\right)\rew jj=\left(\phi(\alpha)^{\ell-j}\right)\left(\frac{\rho\phi(\alpha)(1-(\rho\phi(\alpha)^2)^j)}{\lambda(1-\rho\phi(\alpha)^2)}\right)\\&=\frac{\rho\phi(\alpha)^{\ell-j+1}(1-(\rho\phi(\alpha)^2)^j)}{\lambda(1-\rho\phi(\alpha)^2)},
\end{align*}
which completes the proof of the claim.  Note that we have obtained $\rew jj$ by substituting $\ell=j$ into the expression for $\rew \ell j$, which we found for $\ell\le j$, and we have also used the fact from Lemma \ref{lem:pab} that $p_{\ell\to j}=\phi(\alpha)^{\ell-j}$ whenever $\ell\ge j$.
%In this case $p_{i\to j}^C$ is no longer as given in Equation $\eqref{eq:pc}$; nevertheless, it is very easy to calculate.  Since $i>j$, $p_{i\to j}^C$ can be viewed as the probability that the sum of $j-i$ independent M/M/1 busy periods (without clearing) do not exceed the exponentially distributed ``clearing'' random variable $\zeta_\alpha$: $p_{i\to j}^C=\mathbb P(\sum_{\ell=1}^{i-j}\tau_\ell\le \zeta_\alpha)$. Moreover, since $\zeta_\alpha$ is memoryless, this can also be thought of as the probability that \emph{each} busy period does not exceed an independent copy of $\zeta_\alpha$, that is, \[p_{i\to j}^C=\mathbb P(\tau\le \zeta_\alpha)^{i-j}=\phi(\alpha)^{i-j}.\]  Consequently, when $i>j$, we have \[\rew \ell j=\frac{\rho\phi(\alpha)^{i-j+1}(1-(\rho\phi(\alpha)^2)^j)}{\lambda(1-\rho\phi(\alpha)^2)}.\]
%If we were to let $\alpha=0$, we would have a model without clearing and we would recover the analogous quantities from Section \ref{sec:mm1secondrow}.
\end{proof}

Finally, we use Lemma \ref{lem:eij} to prove Theorem \ref{thm:eij}.
\begin{theorem:eij}
For any class $\mathbb M$ Markov chain, if $\lambda_m,\mu_m>0$ and $\ell,j\ge j_0+1$, we have
\begin{align}
\label{eq:thm2}
\rep \ell jm&=\begin{cases}
\Omega_m r_m^{j-\ell}\left(1-(r_m\phi_m(\alpha_m))^{\ell-j_0}\right)&\mbox{if }\ell\le j\\
\Omega_{m}\phi_{m}(\alpha_{m})^{\ell - j}\left(1 - (r_{m}\phi_{m}(\alpha_{m}))^{j - j_{0}}\right)&\mbox{if }\ell\ge j.
\end{cases}
\end{align}
\end{theorem:eij}
\begin{proof}
Observe that the time spent in state $(m,j)$ before leaving phase $m$, given initial state $(m,\ell)$ in a class $\mathbb M$ Markov chain with  $\lambda_m,\mu_m>0$ is stochastically identical to the time spent in state $j-j_0$ before reaching state $0$, given initial state $\ell-j_0$ in an M/M/1/clearing model with arrival, departure, and clearing rates $\lambda_m$, $\mu_m$, and $\alpha_m$, respectively.  That is, \[\rep \ell j m=\rew \ell j,\] where the quantity on the left-hand side is associated with the class $\mathbb M$ Markov chain, and the quantity on the right-hand side with the M/M/1/clear\-ing model Markov chain (with the appropriate transition rate parameters and $A=\{1,2,3,\ldots\}$).

We proceed to complete the proof by applying Lemma \ref{lem:eij}.  Recall when $\lambda_m,\mu_m>0$, we have the notation $\rho_m=\lambda_m/\mu_m$, $r_m=\rho_m\phi_m(\alpha_m)$, and $\Omega_m=r_m/(\lambda_m(1-r_m\phi_m(\alpha_m)))$.  Applying Lemma \ref{lem:eij} when $\ell\le j$ yields
\begin{align*}
\rep \ell j m&=\frac{(\rho_m\phi_m(\alpha_m))^{(j-j_0)-(\ell-j_0)+1}\left(1-(\rho_m\phi_m(\alpha_m)^2)^{\ell-j_0}\right)}{\lambda_m(1-\rho_m\phi_m(\alpha_m)^2)}\\
&=\frac{r_m^{j-\ell+1}\left(1-(r_m\phi_m(\alpha_m))^{\ell-j_0}\right)}{\lambda_m(1-r_m\phi_m(\alpha_m))}\\
&=\Omega_m r_m^{j-\ell}\left(1-(r_m\phi_m(\alpha_m))^{\ell-j_0}\right),
\end{align*}
while when $\ell\ge j$, Lemma \ref{lem:eij} yields
\begin{align*}
\rep \ell j m&=\frac{\rho_m\phi_m(\alpha_m)^{(\ell-j_0)-(j-j_0)+1}\left(1-(\rho_m\phi_m(\alpha_m)^2)^{j-j_0}\right)}{\lambda_m(1-\rho_m\phi_m(\alpha_m)^2)}\\
&=\frac{r_m\phi_m(\alpha_m)^{\ell-j}\left(1-(r_m\phi_m(\alpha_m))^{j-j_0}\right)}{\lambda_m(1-r_m\phi_m(\alpha_m))}\\
&=\Omega_m \phi_m(\alpha_m)^{\ell-j}\left(1-(r_m\phi_m(\alpha_m))^{j-j_0}\right),
\end{align*}
as claimed.\end{proof}

\section{Extending the scope of the CAP Method}\label{sec:scope}

In this section we briefly touch upon ways in which the CAP method can be extended beyond class $\mathbb M$ Markov chains.

\subsection{Chains with ``catastrophes''}

Recall that the M/M/1 clearing model is used to model a system where there can be a catastrophe from any nonzero state causing an immediate transition to state $0$.  Similarly, we can consider a modification of a class $\mathbb M$ Markov chain where from any state $(m,j)$ with $j\ge j_0+1$, a catastrophe can occur taking one to state $x\in\mathcal N$ with rate $\alpha_m\langle x\rangle\equiv q((m,j),x)$.\footnote{Whether or not catastrophes can also occur in states $(m,j_0)$ will not change the analysis as arbitrary transitions from states $(m,j_0)$ to states $x\in\mathcal N$ are already allowed in class $\mathbb M$ Markov chains.}  That is, each phase can have several catastrophe rates, one for each state in the non-repeating portion.  In this case, it will be useful to redefine $\alpha_m$ as follows: \[\alpha_m\equiv\sum_{x\in\mathcal N} \alpha_m\langle x\rangle+\sum_{i=m+1}^M\sum_{\Delta=-1}^1\alp m{i-m}{\Delta}.\]  The CAP method can easily be modified to give limiting probabilities for these types of Markov chains.

\subsection{Skipping levels when transitioning between phases}

Although the assumption that transitions from state $(m,j)$ to state $(m,\ell)$ can only occur only if $\ell=j\pm 1$ is essential to the CAP method, the assumption that transitions from state $(m,j)$ to state $(i,\ell)$ (where $i>m$) can only occur if $\ell=j\pm1$ is much less important.  That is, the CAP method may be extended to allow for nonzero transition rates of the form $\alp m{\Delta_1}{\Delta_2}$ with $d\le \Delta_2\le D$ for some $d,D\in\mathbb Z$.  However, it is advisable to treat the levels $L_{j_0},L_{j_{0}+1},\ldots,L_{j_{0}+\max\{|d|,|D|\}-1}$ as special cases, just as $L_{j_0}$ was treated as a special case in the analysis presented throughout this paper.

\subsection{Chains with an infinite number of phases}

Consider a chain with the structure of a class $\mathbb M$ chain, except with infinitely many \emph{phases} (i.e., $m\in\{0,1,2,\ldots\}$), and a possibly infinite non-repeating portion, $\mathcal N$.  The CAP method may be used to determine the $\{c_{m,k}\}_{0\le k\le m}$ values in terms of $\{\pi_x\}_{x\in\mathcal N}$ for the first $K$ phases by solving a system of at most $O(K^2)$ equations.  This is because the CAP method provides recurrences such that each $\{c_{m,k}\}_{0\le k\le m}$ value can be expressed in terms of $\{c_{i,k}\}_{0\le k\le i\le m-1}$ values; that is, only information about lower-numbered phases (and the non-repeating portion) is needed to compute each $c_{m,k}$.  We can first express such values for phase $m=0$, then phase $m=1$, and so on.  Once these values---along with the easily determined corresponding base terms---have been obtained, we can use the CAP method to find the limiting probabilities for all states in the first $K$ phases as long as we know the $\{\pi_x\}_{x\in\mathcal N}$ values.

Such a procedure is typically not useful, as the $\{\pi_x\}_{x\in\mathcal N}$ values are usually determined via the normalization constraint, which requires expressing limiting probabilities, $\pi_{(m,j)}$, in terms of $\{\pi_x\}_{x\in\mathcal N}$ for \emph{all} phases, rather than for only the first $K$ phases.  However, there are settings where sufficient information about the structure of $\{\pi_x\}_{x\in\mathcal N}$ may be obtained via other analytic approaches, allowing for the CAP method to compute the limiting probability of the first $K$ phases (where $K$ can be as high as desired, subject to computational constraints).  For example, a two-class priority queue can be modeled by an infinite phase variant of a class $\mathbb M$ Markov chain.  In that setting, queueing-theoretic analysis provides sufficient information about the structure of the limiting probabilities in the non-repeating portion (see \cite{sleptchenko2014joint}), making the CAP method a useful tool for that problem.

\section{Conclusion}

This paper presents a study of the stationary distribution of quasi-birth-death (QBD) continuous time Markov chains in class $\mathbb M$.  Class $\mathbb M$ Markov chains are ergodic chains consisting of a finite nonrepeating portion and an infinite repeating portion.  The repeating portion of a class $\mathbb M$ chain consists of an infinite number of levels and a finite number of phases.  Moreover, transitions in such chains are \emph{skip-free in level}, in that one can only transition between consecutive levels, and \emph{unidirectional in phase}, in that one can only transition from lower-numbered phases to higher-numbered phases.  Despite these restrictions, class $\mathbb M$ Markov chains are used extensively in modeling computing, service, and manufacturing systems, as they allow for keeping track of both the number of jobs in a system (via levels), and the state of the server(s) and/or the arrival process to the system (via phases).

This paper develops and introduces a novel technique, Clearing Analysis on Phases (CAP), for determining the limiting probabilities of class $\mathbb M$ chains exactly.  This method proceeds iteratively among the phases, by first determining the form of the limiting probabilities of the states in phase 0, then proceeding to do the same for the states in phase 1, and so on.  As suggested by its name, the CAP method uses clearing model analysis to determine the structure of the limiting probabilities in each phase.

Unlike most existing techniques for solving QBDs, which rely upon the matrix-geometric approach, the CAP method avoids the task of finding the complete rate matrix, $\mathbf R$, entirely.  Instead, the CAP method yields the limiting probabilities of each state, $(m,j)$, in the repeating portion of the Markov chain as a linear combination of scalar \emph{base terms} (with weights dependent on the phase, $m$), each raised to a power corresponding to the level, $j$.  These \emph{base terms} turn out to be the diagonal elements of the rate matrix, $\mathbf R$.  The weights of these linear combinations can be determined by solving a finite system of linear equations.  We also observe that the structure of the weights of these linear combinations can depend on the multiplicity structure of the base terms.

The CAP method can be applied to Markov chains beyond those in class $\mathbb M$, as discussed in Section \ref{sec:scope}.  For example, the CAP method can be used to determine limiting probabilities in chains where one or more phases allow for immediate ``catastrophe'' transitions to states in the non-repeating portion.  As another example, the CAP method can also be applied to Markov chains where transitions between phases can be accompanied with a change in level exceeding 1. The CAP method can also be used to study some chains with an infinite number of phases.  There is ample room for future work to extend the CAP method  in a variety of directions.

The CAP method and the solution form it provides offer several impactful advantages.  First, while many existing methods for determining the limiting probabilities of QBDs exploit the relationship between successive \emph{levels}, the CAP method exploits the relationship between successive \emph{phases}, thereby offering complementary probabilistic intuition on the structure and steady-state behavior of class $\mathbb M$ Markov chains.  This method also provides an additional tool for practitioners who are studying systems that can be modeled by class $\mathbb M$ Markov chains.  Depending on the application domain, the scalar solution form of the CAP method may have advantages over other solution forms for computing certain metrics of interest (e.g., mean values, higher moments, tail probabilities, etc.).  While this paper does not cover using the solution of the CAP method to derive metrics of interest, as such metrics are often application specific, we hope that future work can find novel uses for the CAP method in a variety of settings.

\bibliographystyle{abbrv}
{\bibliography{limitingprob.bib}}

\clearpage

\appendix

\bigskip

\noindent{\Large\textbf{Appendix}}

%\section{M/M/1 busy period transform}\label{app:busy}

%Let $B$ be the busy period of an M/M/1 Markov chain.  Observe that

%\[B=\begin{cases}
%Z+B_1+B_2&\mbox{w.p. }\lambda/(\lambda+\mu)\\
%Z&\mbox{w.p. } \mu/(\lambda+\mu),
%\end{cases}\]
%where $Z$, $B_1$, and $B_2$ are independent random variables, with $Z\sim\ed(\lambda+\mu)$ and $B_1$ and $B_2$ drawn from the same distribution as $B$.

%Hence, the Laplace Transform of the busy period, $\phi(\cdot)$, is given by  \[\phi(s)=\frac{\lambda}{s+\lambda}\left(\frac{\lambda\phi(s)^2}{\lambda+\mu}+\frac{\mu}{\lambda+\mu}\right),\] which has only one valid solution (i.e., only one solution guaranteeing that $\phi(s)\le1$):\[\phi(s)=\frac{s+\lambda+\mu-\sqrt{(s+\lambda+\mu)^2-4\lambda\mu}}{2\lambda}.\]

%We can then confirm via generating functions, that the limiting probability distribution of the M/M/1 clearing model is given by $\pi_j=(1-\rho\phi(\alpha))(\rho\phi(\alpha))^j$, with $\phi(\cdot)$ as given above.

\section{An alternative interpretation of the Laplace transform}\label{app:alternate}

Let $X$ be a nonnegative random variable, with well-defined Laplace transform $\psi(\cdot)$ (i.e., $\psi$ is defined on all positive reals), cumulative distribution function, $F_X(\cdot)$, and probability density function, $f_X(\cdot)$; note that $X$ may have nonzero probability mass at $+\infty$, in which case $\int_0^\infty f_X(t)\,dt<1$ (where we interpret the integral as being evaluated on $\{t\in\mathbb R\colon 0\le t<\infty\}$).  Then for any constant $w>0$, we have the following interpretation of $\psi$:
\begin{align*}
\psi(w)&=\int_{0}^\infty e^{-wt}f_{X}(t)\,dt\\
&=\left.e^{-wt}F_X(t)\right|_0^\infty+\int_0^\infty F_X(t)\left(we^{-wt}\right)\,dt\\
&=\mathbb P\{X\le\zeta_w\},
\end{align*}
where $\zeta_w\sim\ed(w)$ is a random variable independent of $X$.

\section{The Complete Proof of Theorem \ref{THM:RESULT}}\label{app:complete}
\begin{proof}

%For simplicity, we present the proof for the case where $j_0=0$, which is without loss of generality, as setting $j_0=0$ is equivalent to a convenient ``renaming'' of states.  The result in the general case follows by appropriate substitutions of the quantity $(j-j_0)$ in place of $j$.

We prove the theorem via strong induction on the phase, $m$.  Specifically, for each phase $m$, we will show that $\pi_{(m,j)}$ takes the form $\pi_{(m,j)}=\sum_{k=0}^m c_{m,k}r_k^{j-j_0}$ for all $j\ge j_0+1$, and show that $\{c_{m,k}\}_{0\le k\le m-1}$ satisfies
{\footnotesize \[c_{m,k}=\begin{cases}\displaystyle{\frac{r_{k}r_m}{\lambda_m(r_k-r_m)(1-\phi_m(\alpha_m)r_k)}\left(\sum_{i=k}^{m-1}\sum_{\Delta=-1}^1c_{i,k}\alp i{m-i}\Delta r_k^\Delta\right)}&\mbox{if }r_m,r_k>0\\
\displaystyle{\frac{\displaystyle{\sum_{i=k}^{m-1}\sum_{\Delta=-1}^1c_{i,k}\alp i{m-i}\Delta r_k^\Delta}}{\mu_m(1-r_k)+\alpha_m}}&\mbox{if }r_k>r_m=0\\
0&\mbox{if }r_k=0,
\end{cases}\] }
while $c_{m,m}=\pi_0-\sum_{k=0}^{m-1} c_{m,k}$.
Finally, after completing the inductive proof, we justify that the remaining linear equations in the proposed system are ordinary balance equations together with the normalization constraint.

\smallskip

\noindent\textbf{Base case:}

We begin our strong induction by verifying that the claim holds for the base case (i.e., for $m=0$).  By the ergodicity requirement on class $\mathbb M$ Markov chains, $\lambda_0>0$, leaving two sub-cases when $m=0$: the case where $\mu_0>0$, and the case where $\mu_0=0$.  In the first case, where $\mu_0>0$, Equation \eqref{eq:thm} yields \[\rep {j_0+1}j0=\Omega_0 r_0^{j-j_0-1}(1-r_0\phi_0(\alpha_0))=\frac{r_0^{j-j_0}}{\lambda_0}.\]

Now consider the other sub-case, where $\mu_0=0$, recalling that in this case, we have $r_0=\lambda_0/(\lambda_0+\alpha_0)$.  We calculate $\rep {j_0+1}j0$ for this case, by noting that transitions within states in $P_0$ cannot decrease the level, as follows: starting at state $(0,j_0+1)$, we either never visit state $(0,j)$ before leaving $P_0$, or we visit state $(0,j)$ \emph{exactly once} before leaving $P_0$. The latter occurs with probability \[\left(\frac{\lambda_0}{\lambda_0+\alpha_0}\right)^{j-j_0-1}=r_0^{j-j_0-1},\] in which case, we spend an average of $1/(\lambda_0+\alpha_0)=r_0/\lambda_0$ units of time in state $(0,j)$.  Hence, we find that \[\rep {j_0+1}j0=r_0^{j-j_0-1}\left(\frac{r_0}{\lambda_0}\right)=\frac{r_0^j}{\lambda_0},\] which coincides with our finding for the case where $\mu_0>0$.

In both cases, applying Theorem \ref{thm:main} yields
\begin{align*}
\pi_{(0,j)}&=\pi_{(0,j_0)}\lambda_0\rep {j_0+1}j0=\pi_{(0,j_0)}\lambda_0\left(\frac{r_0^{j-j_0}}{\lambda_0}\right)=\pi_{(0,j_0)}r_0^{j-j_0}\\
&=c_{0,0}r_0^{j-j_0},
\end{align*} where $c_{0,0}=\pi_{(0,j_0)}$.  Hence, $\pi_{(0,j)}$ takes the claimed form. Moreover, $c_{0,0}$ satisfies the claimed constraint as $c_{0,0}=\pi_{(0,j_0)}-\sum_{k=0}^{m-1} c_{m,k}=\pi_{(0,j_0)}-0=\pi_{(0,j_0)}$, because the sum is empty when $m=0$.  Note that when $m=0$, $\{c_{m,k}\}_{0\le k<m\le M}$ is empty, and hence, there are no constraints on these values that require verification.

%Finally, observe that because $\pi_{0,0}=c_{0,0}r_0^0$, so the claimed form also holds for $j=0$.

\smallskip

\noindent\textbf{Inductive step:}

Next, we proceed to the inductive step and assume the induction hypothesis holds for all phases $i\in\{0,1,\ldots,m-1\}$.  In particular, we assume that $\pi_{(i,j)}=\sum_{k=0}^i c_{i,k} r_k^{j-j_0}$ for all $i<m$.  For convenience, we introduce the notation
\[\Upsilon_{m,j}\equiv\lambda_m \rep{j_0+1}jm\quad\mbox{and}\quad\Psi_{m,k,j}\equiv\sum_{\ell=1}^\infty r_k^{\ell-j_0}\rep \ell j m.\]
Using this notation, we apply Theorem \ref{thm:main} and the induction hypothesis, which yields\footnote{
Note that $\sum_{\Delta=-1}^1\alp i{m-i}\Delta r_k^\Delta$ is not well-defined when $r_k=0$, as $0^{-1}$ and $0^0$ are not well-defined.  However, this is just a convenient formal manipulation which will remain true if we assign any real value to $\sum_{\Delta=-1}^1\alp i{m-i}\Delta r_k^\Delta$ as $\Psi_{m,k,j}=0$ in the $r_k=0$ case, and the ``contribution'' to the sum by an index $k$ such that $r_k=0$ is also 0.  One can verify that this is ``harmless'' by examining such $k$ indices in isolation.  Note further that we have also used the fact that $\pi_{(i,j_0)}$ also satisfies the claimed form for all $i<m$, which is true as $c_{i,i}=\pi_{(i,j_0)}-\sum_{k=0}^{i-1} c_{i,k}$ (from the inductive hypothesis) implies that $\pi_{(i,j_0)}=\sum_{k=0}^{i} c_{i,k}=\sum_{k=0}^{i} c_{i,k}r_k^0$, except that once again values of $r_k=0$ yield undefined quantities of the form $0^0$.  Once again, this is a convenient formal manipulation that will not affect our results if we simply assign $0^0=1$ in this context.}
{\footnotesize
\begin{align}
\pi_{(m,j)}&=\pi_{(m,j_0)}\lambda_m\rep{j_0+1}jm+\sum_{i=0}^{m-1}\sum_{\ell=1}^\infty\sum_{\Delta=-1}^1\pi_{(i,\ell-\Delta)}\alp i{m-i}\Delta\rep {\ell} j m\nonumber\\
&=\pi_{(m,j_0)}\Upsilon_{m,j}+\sum_{i=0}^{m-1}\sum_{\ell=1}^\infty\sum_{\Delta=-1}^1\alp i{m-i}\Delta\left(\sum_{k=0}^{i}c_{i,k}r_k^{\ell-j_0-\Delta}\rep \ell j m\right)\nonumber\\
&=\pi_{(m,j_0)}\Upsilon_{m,j}+\sum_{k=0}^{m-1}\sum_{i=k}^{m-1}\left(c_{i,k}\sum_{\Delta=-1}^1\alp i{m-i}\Delta r_k^\Delta\right)\left(\sum_{\ell=1}^\infty r_k^{\ell-j_0}\rep \ell j m\right)\nonumber\\
&=\pi_{(m,j_0)}\Upsilon_{m,j}+\sum_{k=0}^{m-1}\sum_{i=k}^{m-1}\left(c_{i,k}\sum_{\Delta=-1}^1\alp i{m-i}\Delta r_k^\Delta\right)\Psi_{m,k,j} \label{eq:main2}.
%&=\pi_0r_n^j+\sum_{\ell=1}^{n-1}\sum_{m=\ell}^{n-1}\alpha_{m,n}c_{m,\ell}\left(\frac{r_{\ell}r_n(r_n^j-r_\ell^j)}{\lambda_n(r_n-r_\ell)(1-\phi_n(\alpha_n)r_\ell)}\right).
\end{align}
}
We proceed to compute $\Upsilon_{m,j}$ and $\Psi_{m,k,j}$ separately in the following cases:
\begin{itemize}
\item \textbf{Case 1}: $\lambda_m,\mu_m>0$
\item \textbf{Case 2}: $\lambda_m>\mu_m=0$
\item \textbf{Case 3}: $\mu_m>\lambda_m=0$
\item \textbf{Case 4}: $\mu_m=\lambda_m=0$
\end{itemize}

\smallskip

\noindent\textbf{Computations for Case 1 ($\lambda_m,\mu_m>0$):}

When $\lambda_m,\mu_m>0$, Equation \eqref{eq:thm} yields $\Upsilon_{m,j}=\lambda_m\rep {j_0+1}jm=r_m^{j-j_0}.$ We also find that
\begin{align*}
\Psi_{m,k,j}&=\sum_{\ell=1}^\infty r_k^{\ell-j_0}\rep\ell jm\\
&= \sum_{\ell=j_0+1}^j r_k^{\ell-j_0}\rep \ell j m+\sum_{\ell=j+1}^\infty r_k^{\ell-j_0}\rep\ell jm\\
&=\Omega_m\left(\sum_{\ell=j_0+1}^j r_k^{\ell-j_0}r_m^{j-\ell}\left(1-(r_m\phi_m(\alpha_m))^{\ell-j_0}\right)\right.\\
&\qquad\qquad\left.+\sum_{\ell=j+1}^\infty r_k^{\ell-j_0}\phi_{m}(\alpha_m)^{\ell-j}\left(1-(r_m\phi_m(\alpha_m))^{j-j_0}\right)\right)\\
&=\frac{r_{k}r_m(r_k^{j-j_0}-r_m^{j-j_0})}{\lambda_m(r_k-r_m)(1-\phi_m(\alpha_m)r_k)},
\end{align*}
where the last equality follows from well known geometric sum identities.  Note that this expression is well-defined because $r_k\neq r_m$ by assumption and $r_m\phi_m(\alpha_m)\neq1$.

\smallskip

\noindent\textbf{Computations for Case 2 ($\lambda_m>\mu_m=0$):}

When $\lambda_m>\mu_m=0$, we recall that $r_m=\lambda_m/(\lambda_m+\alpha_m)$ and compute $\rep \ell j m$ as follows: starting at state $(m,\ell)$, we either never visit state $(m,j)$ before leaving $P_m$, or we visit state $(m,j)$ \emph{exactly once} before leaving $P_m$.  If $\ell>j$, we never visit state $(m,j)$ before leaving $P_m$ (and so $\rep \ell jm=0$), but if $\ell\le j$, we visit state $(m,j)$ \emph{exactly once} before leaving $P_m$ with probability $r_m^{j-\ell}$, and this visit will last an average time of $1/(\lambda_m+\alpha_m)=r_m/\lambda_m$, yielding
\[\rep \ell jm=r_m^{j-\ell}\left(\frac{r_m}{\lambda_m}\right)=\frac{r_m^{j-\ell+1}}{\lambda_m}.\]  In particular, $\Upsilon_{m,j}=\lambda_m\rep {j_0+1}jm=r_m^{j-j_0},$ coinciding with the expression for $\Upsilon_{m,j}$ from Case 1, and furthermore, we have
{
\begin{align*}
\Psi_{m,k,j}&=\sum_{\ell=j_0+1}^\infty r_k^{\ell-j_0}\rep\ell jm\\
&= \sum_{\ell=j_0+1}^{j} r_k^{\ell-j_0}\rep \ell j m+\sum_{\ell=j+1}^\infty r_k^{\ell-j_0}\rep\ell jm\\
&=\sum_{\ell=j_0+1}^{j} \frac{r_k^{\ell-j_0} r_m^{j-\ell+1}}{\lambda_m}\\
&=\frac{r_k r_m(r_k^{j-j_0}-r_m^{j-j_0})}{\lambda_m(r_k-r_m)}=\frac{r_k r_m(r_k^{j-j_0}-r_m^{j-j_0})}{\lambda_m(r_k-r_m)(1-\phi_m(\alpha_m)r_k)}.
\end{align*}}
which coincides with the expression for $\Psi_{m,k,j}$ that we found in Case 1.  The last equality follows by noting that in this case we have $\phi_m(s)\equiv0$, and hence $1-\phi_m(\alpha_m)r_k=1$.

\smallskip

\noindent\textbf{Computations for Case 3 ($\mu_m>\lambda_m=0$):}

When $\mu_m>\lambda_m=0$, we  have $\Upsilon_{m,j}=\lambda_m\rep {j_0+1}jm=0.$  Next, we compute $\rep\ell jm$ as follows: starting at state $(m,\ell)$, if $\ell<j$, we never visit $j$ before leaving $P_m$, while if $\ell\ge j$ we will visit $j$ {exactly once} with probability $\mu_m^{\ell-j}/(\mu_m+\alpha_m)^{\ell-j}$ and this visit will last an average duration of $1/(\mu_m+\alpha_m)$ units of time.  Consequently, $\rep\ell jm=0$ in the former case and \[\rep\ell jm=\frac{\mu_m^{\ell-j}}{(\mu_m+\alpha_m)^{\ell-j+1}}\] in the latter case.  Finally, we have
\begin{align*}
\Psi_{m,k,j}&=\sum_{\ell=j_0+1}^\infty  r_k^{\ell-j_0}\rep\ell jm\\
&= \sum_{\ell=j_0+1}^{j-1}  r_k^{\ell-j_0}\rep \ell j m+\sum_{\ell=j}^\infty  r_k^{\ell-j_0}\rep\ell jm\\
&=\sum_{\ell=j}^\infty\frac{r_k^{\ell-j_0}\mu_m^{\ell-j}}{(\mu_m+\alpha_m)^{\ell-j+1}}=\frac{r_k^{j-j_0}}{\mu_m(1-r_k)+\alpha_m}.
\end{align*}

\smallskip

\noindent\textbf{Computations for Case 4 ($\mu_m=\lambda_m=0$):}

When $\mu_m=\lambda_m=0$, we again have $\Upsilon_{m,j}=\lambda_m\rep {j_0+1}jm=0,$ as in Case 3.  Next, we compute $\rep\ell jm$ as follows: in this case any visit to $P_m$ will consist entirely of one visit to the initial state in $P_m$, as there are no transitions to other states in the same phase.  Hence, $\rep\ell j m=\alpha_m$ if $\ell=j$, and $\rep\ell j m=0$ otherwise.  Consequently,
\begin{align*}
\Psi_{m,k,j}&=\sum_{\ell=1}^\infty r_k^{\ell-j_0} \rep\ell j m=r_k^{j-j_0}\rep jjm=\frac{r_k^{j-j_0}}{\alpha_m}\\
&=\frac{r_k^{j-j_0}}{\mu_m(1-r_k)+\alpha_m},
\end{align*}
which coincides with the expression for $\Psi_{m,k,j}$ that we found in Case 3. The last equality follows by noting that $\mu_m=0$, and hence $\mu_m(1-r_k)=0$.

\smallskip

\noindent\textbf{Completing the inductive step:}

We now proceed to substitute the results of our computations into Equation \eqref{eq:main2}.  Since $\Upsilon_{m,j}$ can be given by the same expression for both Case 1 and 2, and the same holds for $\Psi_{m,k,j}$, we consider these two cases together, and note that they jointly make up the case where $r_m>0$.  For $j\ge j_0+1$,
{\footnotesize
\begin{align*}
\pi_{(m,j)}&=\pi_{(m,j_0)}\Upsilon_{m,j}+\sum_{k=0}^{m-1}\sum_{i=k}^{m-1}\left(c_{i,k}\sum_{\Delta=-1}^1\alp i{m-i}\Delta r_k^\Delta\right)\Psi_{m,k,j}\\
&=\pi_{(m,j_0)}r_m^{j-j_0}+\sum_{k=0}^{m-1}\sum_{i=k}^{m-1}\left(c_{i,k}\sum_{\Delta=-1}^1\alp i{m-i}\Delta r_k^\Delta\right)\left(\frac{r_k r_m(r_k^{j-j_0}-r_m^{j-j_0})}{\lambda_m(r_k-r_m)(1-\phi_m(\alpha_m)r_k)}\right)\\
&=\sum_{k=0}^m c_{m,k}r_k^{j-j_0},
\end{align*}}
where we have collected terms with
{\small
\begin{align*}
c_{m,k}&=\frac{\displaystyle{r_kr_m\left(\sum_{i=k}^{m-1}\sum_{\Delta=-1}^1c_{i,k}\alp i{m-i}\Delta r_k^\Delta\right)}}{\lambda_m(r_k-r_m)(1-\phi_m(\alpha_m)r_k)}& (&0\le k < m\le M\colon r_m,r_k>0)
\end{align*}
}
and $c_{m,k}=0$ when $r_m>r_k=0$ and $c_{m,m}=\pi_{(m,j_0)}-\sum_{k=0}^{m-1}c_{m,k}$, as claimed.

The expressions for $\Upsilon_{m,j}$ and $\Psi_{m,k,j}$ also coincide across Cases 3 and 4 (although they are distinct from their Case 1 and 2 counterparts), so we also consider these two cases together, noting that they jointly make up the case where $\lambda_m=r_m=0$:
\begin{align*}
\pi_{(m,j)}&=\pi_{(m,j_0)}\Upsilon_{m,j}+\sum_{k=0}^{m-1}\sum_{i=k}^{m-1}\left(c_{i,k}\sum_{\Delta=-1}^1\alp i{m-i}\Delta r_k^\Delta\right)\Psi_{m,k,j}\\
&=0+\sum_{k=0}^{m-1}\sum_{i=k}^{m-1}\left(c_{i,k}\sum_{\Delta=-1}^1\alp i{m-i}\Delta r_k^\Delta\right)\left(\frac{r_k^{j-j_0}}{\mu_m(1-r_k)+\alpha_m}\right)\\
&=\sum_{k=0}^m c_{m,k}r_k^{j-j_0}
\end{align*}
where we have collected terms with
\begin{align*}
c_{m,k}&=\frac{\displaystyle{\sum_{i=k}^{m-1}\sum_{\Delta=-1}^1c_{i,k}\alp i{m-i}\Delta r_k^\Delta}}{\mu_m(1-r_k)+\alpha_m}&(&0\le k < m\le M\colon r_m,r_k>0)
\end{align*} and $c_{m,k}=0$ when $r_m=r_k=0$.  Observe that since $r_m=0$, it appears that we can allow $c_{m,m}$ to take any real value, so in order to satisfy the induction hypothesis, we set $c_{m,m}=\pi_{(m,j_0)}-\sum_{k=0}^{m-1}c_{m,k}$ in the $r_m=0$ case as well.  Also note that we have set $c_{m,k}=0$ when $r_k=0$ in both the $r_m>0$ and $r_m=0$ cases.  This completes the inductive step and the proof by induction.

\smallskip

\noindent\textbf{The balance equations and normalization constraint:}

The equations with $\pi_{(m,j_0)}$ and $\pi_x$ in their left-hand sides in our proposed system are ordinary balance equations (that have been normalized so that there are no coefficients on the left-hand side).

It remains to verify that the final equation, which is the normalization constraint:
\begin{align*}
1&=\sum_{x\in\mathcal N}\pi_x+\sum_{m=0}^M\pi_{(m,j_0)}+\sum_{m=0}^M\sum_{j=j_0+1}^\infty \pi_{(m,j)}\\
&=\sum_{x\in\mathcal N}\pi_x+\sum_{m=0}^M\sum_{k=0}^M c_{m,k}+\sum_{m=0}^M\sum_{k=0}^{m-1}\sum_{j=j_0+1}^\infty c_{m,k}r_k^{j-j_0}\\
&=\sum_{x\in\mathcal N}\pi_x+\sum_{m=0}^M\sum_{k=0}^{m} \frac{c_{m,k} r_k}{1-r_k}.
\end{align*}
\end{proof}

\section{Negative Binomial Lemmas}\label{app:negbinlems}

These lemmas are used to derive our main results, and are likely known, but to make the paper self-contained we both state and prove them.

\begin{lemma} \label{negbinlemmaone} For each $\beta \in (0,1)$, we have
\begin{align*}
\sum_{\ell = j_{0}}^{\infty}{\ell - j_{0} + n \choose n}\beta^{\ell - (j_{0} - 1)} = \frac{\beta}{(1 - \beta)^{n+1}}.
\end{align*}
\end{lemma}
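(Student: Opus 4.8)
The plan is to recognize this as a shifted instance of the standard negative binomial generating function. First I would substitute $k = \ell - j_{0}$, which runs over $0,1,2,\ldots$ as $\ell$ runs over $j_{0},j_{0}+1,\ldots$; since $\ell - (j_{0}-1) = k+1$ and $\binom{\ell-j_{0}+n}{n} = \binom{k+n}{n}$, this transforms the left-hand side into $\sum_{k=0}^{\infty}\binom{k+n}{n}\beta^{k+1} = \beta\sum_{k=0}^{\infty}\binom{k+n}{n}\beta^{k}$. The claim then reduces to the core identity $\sum_{k=0}^{\infty}\binom{k+n}{n}\beta^{k} = (1-\beta)^{-(n+1)}$, valid for $\beta\in(0,1)$, after which multiplying by $\beta$ yields exactly $\beta/(1-\beta)^{n+1}$.

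To establish the core identity I would induct on $n$. The base case $n=0$ is the ordinary geometric series $\sum_{k\ge 0}\beta^{k} = 1/(1-\beta)$, since $\binom{k}{0}=1$. For the inductive step, assuming $\sum_{k\ge 0}\binom{k+n}{n}\beta^{k} = (1-\beta)^{-(n+1)}$, I would invoke the hockey-stick identity $\binom{k+n+1}{n+1} = \sum_{i=0}^{k}\binom{i+n}{n}$ to rewrite $\sum_{k\ge 0}\binom{k+n+1}{n+1}\beta^{k}$ as a partial-sum-weighted series, and then use the elementary fact $\sum_{k\ge 0}\left(\sum_{i=0}^{k}a_{i}\right)\beta^{k} = (1-\beta)^{-1}\sum_{i\ge 0}a_{i}\beta^{i}$ (a Cauchy product against the geometric series) to conclude it equals $(1-\beta)^{-1}(1-\beta)^{-(n+1)} = (1-\beta)^{-(n+2)}$. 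An equally short alternative is to differentiate $(1-\beta)^{-(n+1)}$ term by term, using $k\binom{k+n}{n} = (n+1)\binom{k+n}{n+1}$ to identify $\frac{d}{d\beta}\sum_{k}\binom{k+n}{n}\beta^{k}$ with $(n+1)\sum_{k}\binom{k+n+1}{n+1}\beta^{k}$, which again delivers the $n+1$ case from the $n$ case.

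Since every term is nonnegative and the closed form is finite for $\beta\in(0,1)$, all the series in sight converge absolutely, so the reindexing, the interchange of summation in the Cauchy product, and (in the alternative) the term-by-term differentiation are all justified throughout $(0,1)$. The only point genuinely requiring care — and hence the main, if modest, obstacle — is this justification of the term-by-term manipulations; once absolute convergence is noted, the binomial-coefficient bookkeeping in the inductive step is entirely routine.
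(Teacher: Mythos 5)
Your proof is correct, but it takes a genuinely different route from the paper's. The paper's proof is a two-step probabilistic normalization argument: it factors out $\beta$, reindexes the sum into the form $\sum_{\ell = n+1}^{\infty}\binom{\ell-1}{n}\beta^{\ell-(n+1)}(1-\beta)^{n+1}$, and recognizes this as the total mass of a negative binomial distribution with parameters $n+1$ and $1-\beta$ (number of Bernoulli trials until the $(n+1)$st success), which therefore sums to $1$. In other words, the paper simply \emph{cites} the fact that this pmf is a probability distribution, which is equivalent to the core identity $\sum_{k\geq 0}\binom{k+n}{n}\beta^{k}=(1-\beta)^{-(n+1)}$ that you instead \emph{prove} from scratch, by induction on $n$ using the hockey-stick identity together with the partial-sum/Cauchy-product fact (or, in your alternative, term-by-term differentiation). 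Your argument buys full self-containedness and makes the convergence justifications explicit, at the cost of an induction; the paper's buys brevity by outsourcing the identity to the known normalization of the negative binomial distribution. Both are sound, and your attention to absolute convergence as the one point needing care is exactly right, though for a series of nonnegative terms with a finite closed form this is indeed a modest hurdle.
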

\begin{proof} Having a negative binomial distribution with parameters $n + 1$ and $(1 - \beta)$ in mind, we observe that
{\footnotesize
\begin{align*}
 \sum_{\ell = j_{0}}^{\infty}{\ell - j_{0} + n \choose n}\beta^{\ell - (j_{0} - 1)} &= \beta \sum_{\ell = j_{0}}^{\infty}{\ell - j_{0} + n \choose n}\beta^{\ell - j_{0}} \\
 &= \frac{\beta}{(1 - \beta)^{n+1}}\sum_{\ell = j_{0}}^{\infty}{\ell - j_{0} + n \choose (n+1)-1}\beta^{\ell - j_{0}}(1 - \beta)^{n+1} \\
 &= \frac{\beta}{(1 - \beta)^{n+1}}\sum_{k=0}^{\infty}{(k + n + 1) - 1 \choose (n+1) - 1}\beta^{k}(1 - \beta)^{n+1} \\
 &= \frac{\beta}{(1 - \beta)^{n+1}}\sum_{\ell = n + 1}^{\infty}{\ell - 1 \choose (n+1) - 1}\beta^{\ell - (n+1)}(1 - \beta)^{n+1} \\
 &= \frac{\beta}{(1 - \beta)^{n+1}}.
 \end{align*}
 }
 \end{proof}

The next lemma shows how to compute a truncated version of the above series.

\begin{lemma} \label{negbinlemmatwo} For $\beta \neq 1$, we have
{\small
\begin{align*}
\sum_{\ell = j_{0}}^{j-1}{\ell - j_{0} + n \choose n}\beta^{\ell - (j_{0} - 1)} &= \frac{\beta - \beta^{j - (j_{0} - 1)}}{(1 - \beta)^{n+1}} \\
&\quad- \sum_{k=1}^{n}\left[{j - j_{0} + k \choose k} - {j - j_{0} + k - 1 \choose k-1}\right]\frac{\beta^{j - (j_{0} - 1)}}{(1 - \beta)^{n + 1 - k}}.
\end{align*}
}
\end{lemma}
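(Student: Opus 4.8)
The plan is to prove the identity by induction on the upper summation limit $j$, holding $n$ fixed; this keeps every manipulation at the level of elementary finite-sum algebra and avoids having to reorganize the binomial index $n$. Write $S_n(j)$ for the left-hand side and $F_n(j)$ for the claimed right-hand side, so the goal is to show $S_n(j) = F_n(j)$ for every integer $j \ge j_0$.

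For the base case $j = j_0$ the left-hand side is an empty sum, so $S_n(j_0) = 0$. On the right, the exponent $j - (j_0 - 1)$ equals $1$, so $\beta^{j-(j_0-1)} = \beta$ and the leading term $\beta - \beta^{j-(j_0-1)}$ vanishes, while each bracketed coefficient collapses to $\binom{k}{k} - \binom{k-1}{k-1} = 1 - 1 = 0$. Hence $F_n(j_0) = 0$ as well, matching $S_n(j_0)$.

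For the inductive step I would compute the forward difference of each side. By definition $S_n(j+1) - S_n(j)$ is the single term at $\ell = j$, namely $\binom{j-j_0+n}{n}\beta^{j-(j_0-1)}$, so it suffices to show $F_n(j+1) - F_n(j)$ equals the same quantity. Replacing $j$ by $j+1$ shifts the binomial arguments by one; simplifying the two resulting binomial differences with Pascal's rule (in the forms $\binom{a+k}{k} - \binom{a+k-1}{k-1} = \binom{a+k-1}{k}$ and $\binom{a+1+k}{k} - \binom{a+k}{k-1} = \binom{a+k}{k}$, with $a = j - j_0$) and dividing out the common factor $\beta^{a+1}$, the desired equality reduces to the purely algebraic claim
\[ \frac{1}{(1-\beta)^n} - \sum_{k=1}^{n}\left[\binom{a+k}{k}\beta - \binom{a+k-1}{k}\right]\frac{1}{(1-\beta)^{n+1-k}} = \binom{a+n}{n}. \]
To establish this I would substitute $\beta = 1 - (1-\beta)$ inside the bracket and again invoke Pascal's rule to rewrite the bracketed term as $\binom{a+k-1}{k-1} - \binom{a+k}{k}(1-\beta)$. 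The sum then splits into two pieces which, after reindexing the first by $k \mapsto k-1$, telescope: all interior terms cancel, leaving only the $k=0$ term $1/(1-\beta)^n$ of one piece and the $k=n$ term $\binom{a+n}{n}$ of the other. This produces exactly $\binom{a+n}{n}$, completing the step.

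I expect the main obstacle to be the bookkeeping in this telescoping sum: one must carefully track the index shift, the two surviving boundary terms, and the cancellation of the leading $1/(1-\beta)^n$ against the like term that arises from the first-term difference of $F_n$. Nothing beyond Pascal's rule and the substitution $\beta = 1 - (1-\beta)$ is needed. As a consistency check, letting $j \to \infty$ (for $\beta \in (0,1)$) kills every $\beta^{j-(j_0-1)}$ factor and recovers the closed form $\beta/(1-\beta)^{n+1}$ of Lemma \ref{negbinlemmaone}, which reassures that the truncation terms have the correct sign and normalization.
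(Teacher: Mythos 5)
Your proof is correct, but it takes a genuinely different route from the paper's. You induct on the truncation point $j$ with $n$ held fixed: the base case $j=j_0$ is a trivial check, and the inductive step reduces, via Pascal's rule applied to both binomial differences and division by the common factor $\beta^{j-(j_0-1)}$, to the identity (with $a=j-j_0$)
\[
\frac{1}{(1-\beta)^n} - \sum_{k=1}^{n}\left[\binom{a+k}{k}\beta - \binom{a+k-1}{k}\right]\frac{1}{(1-\beta)^{n+1-k}} = \binom{a+n}{n},
\]
which your substitution $\beta = 1-(1-\beta)$ and reindexing do establish by telescoping: the split sum evaluates to $1/(1-\beta)^n - \binom{a+n}{n}$, and the leading $1/(1-\beta)^n$ terms cancel exactly as you describe. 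The paper instead fixes $j$ and runs a recursion in $n$: it lowers the binomial's upper index with the hockey-stick identity $\binom{\ell-j_0+n}{n} = \sum_{x=j_0}^{\ell}\binom{x-j_0+n-1}{n-1}$, swaps the order of summation, evaluates the inner geometric series, and arrives at the first-order recursion $a_n = (a_{n-1}-b_n)/(1-\beta)$, whose explicit solution is the stated formula. The trade-off: your argument is a pure verification needing nothing beyond Pascal's rule, so every step is an elementary finite check, but it presupposes knowing the closed form; the paper's recursion actually derives the shape of the answer---the $k$-sum emerges by unwinding the recursion---and it stylistically matches the treatment of the neighboring appendix lemmas. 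Your limiting consistency check ($j\to\infty$ for $\beta\in(0,1)$ recovering Lemma \ref{negbinlemmaone}) is a sensible additional safeguard, though not needed for the proof itself.
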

\begin{proof} Starting with the left-hand-side, we have
{\footnotesize
\begin{align*}
\sum_{\ell = j_{0}}^{j-1}{\ell - j_{0} + n \choose n}\beta^{\ell - (j_{0} - 1)} &= \sum_{\ell = j_{0}}^{j-1}\sum_{x=j_{0}}^{\ell}{x - j_{0} + n - 1 \choose n-1}\beta^{\ell - (j_{0} - 1)} \\
&= \sum_{x = j_{0}}^{j-1}\sum_{\ell = x}^{j - 1}{x - j_{0} + n - 1 \choose n-1}\beta^{\ell - (j_{0} - 1)} \\
&= \sum_{x = j_{0}}^{j-1}{x - j_{0} + n - 1 \choose n-1}\beta^{x - (j_{0} - 1)}\sum_{\ell = x}^{j-1}\beta^{\ell - x} \\
&= \frac{1}{(1 - \beta)}\sum_{x = j_{0}}^{j-1}{x - j_{0} + n - 1 \choose n-1}\beta^{x - (j_{0} - 1)}(1 - \beta^{j-x}) \\
&= \frac{1}{(1 - \beta)}\sum_{x = j_{0}}^{j-1}{x - j_{0} + n - 1 \choose n-1}\beta^{x - (j_{0} - 1)} \\
&\qquad- \frac{1}{1 - \beta}{j - j_{0} + n - 1 \choose n}\beta^{j - (j_{0} - 1)} \\
&= \frac{1}{(1 - \beta)}\sum_{x = j_{0}}^{j-1}{x - j_{0} + n - 1 \choose n-1}\beta^{x - (j_{0} - 1)}\\
&\qquad - \frac{1}{1 - \beta}\left[{j - j_{0} + n \choose n} - {j - j_{0} + n - 1 \choose n-1}\right]\beta^{j - (j_{0} - 1)}.
\end{align*}
}
Setting
\begin{align*} a_{n} &= \sum_{\ell = j_{0}}^{j-1}{\ell - j_{0} + n \choose n}\beta^{\ell - (j_{0} - 1)},\\ b_{n} &= \left[{j - j_{0} + n \choose n} - {j - j_{0} + n - 1 \choose n-1}\right]\beta^{j - (j_{0} - 1)}
\end{align*} we see that for each $n\in\{1,2,3,\ldots\}$ we have
\begin{align*} a_{n} = \frac{a_{n-1}}{1 - \beta} - \frac{b_n}{1 - \beta} \end{align*} where
\begin{align*} a_{0} = \frac{\beta - \beta^{j - (j_{0} - 1)}}{1 - \beta}. \end{align*}  The solution to this recursion is given by
\begin{align*} a_{n} = \frac{a_{0}}{(1 - \beta)^{n}} - \sum_{k=1}^{n}\frac{b_{k}}{(1 - \beta)^{n+1-k}} \end{align*} or, equivalently,
\begin{align*} a_{n} = \frac{1 - \beta^{j - (j_{0} - 1)}}{(1 - \beta)^{n+1}} - \sum_{k=1}^{n}\left[{j - j_{0} + n \choose n} - {j - j_{0} + n - 1 \choose n-1}\right]\frac{\beta^{j - (j_{0} - 1)}}{(1 - \beta)^{n + 1 - k}}, \end{align*} which completes our derivation.  \end{proof}

The next lemma can be viewed as a generalization of Lemma \ref{negbinlemmaone}.

\begin{lemma} \label{negbinlemmathree} For $\beta \in (0,1)$,
{\footnotesize
\begin{align*} \sum_{\ell = j}^{\infty}{\ell - j_{0} + n \choose n} \beta^{\ell - j} = \frac{1}{(1 - \beta)^{n+1}} + \sum_{k=1}^{n}\left[{j - j_{0} + k \choose k} - {j - j_{0} + k - 1 \choose k-1}\right]\frac{1}{(1 -\beta)^{n+1-k}}. \end{align*}
}
\end{lemma}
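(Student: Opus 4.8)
The plan is to obtain this series by subtracting the truncated series of Lemma \ref{negbinlemmatwo} from the full series of Lemma \ref{negbinlemmaone}, and then rescaling the power of $\beta$ so that the exponent matches the $\beta^{\ell - j}$ appearing on the left-hand side of the claim. Since $\beta \in (0,1)$, every series in sight converges absolutely, so splitting and recombining terms is legitimate.

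First I would write, for the sum carrying the exponent $\beta^{\ell - (j_{0}-1)}$ used in the two preceding lemmas,
\[
\sum_{\ell = j}^{\infty}{\ell - j_{0} + n \choose n}\beta^{\ell - (j_{0}-1)} = \sum_{\ell = j_{0}}^{\infty}{\ell - j_{0} + n \choose n}\beta^{\ell - (j_{0}-1)} - \sum_{\ell = j_{0}}^{j-1}{\ell - j_{0} + n \choose n}\beta^{\ell - (j_{0}-1)}.
\]
Applying Lemma \ref{negbinlemmaone} to the first sum and Lemma \ref{negbinlemmatwo} to the second, the two contributions of the form $\beta/(1-\beta)^{n+1}$ cancel, leaving
\[
\sum_{\ell = j}^{\infty}{\ell - j_{0} + n \choose n}\beta^{\ell - (j_{0}-1)} = \frac{\beta^{j - (j_{0}-1)}}{(1 - \beta)^{n+1}} + \sum_{k=1}^{n}\left[{j - j_{0} + k \choose k} - {j - j_{0} + k - 1 \choose k-1}\right]\frac{\beta^{j - (j_{0}-1)}}{(1 - \beta)^{n+1-k}}.
\]

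Next I would reconcile the exponents. Because $\beta^{\ell - j} = \beta^{-(j - (j_{0}-1))}\,\beta^{\ell - (j_{0}-1)}$, the left-hand side of the stated identity is exactly $\beta^{-(j - (j_{0}-1))}$ times the series just evaluated. Multiplying the displayed right-hand side by $\beta^{-(j - (j_{0}-1))}$ cancels each factor $\beta^{j - (j_{0}-1)}$ and produces precisely
\[
\frac{1}{(1 - \beta)^{n+1}} + \sum_{k=1}^{n}\left[{j - j_{0} + k \choose k} - {j - j_{0} + k - 1 \choose k-1}\right]\frac{1}{(1 - \beta)^{n+1-k}},
\]
which is the claimed formula.

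This argument is essentially bookkeeping, and it reuses results already established rather than re-deriving anything; the only point requiring care is tracking the shift between the exponent $\beta^{\ell - (j_{0}-1)}$ used in Lemmas \ref{negbinlemmaone} and \ref{negbinlemmatwo} and the exponent $\beta^{\ell - j}$ appearing in the statement, which is resolved cleanly by the single rescaling factor $\beta^{-(j - (j_{0}-1))}$. As an \emph{alternative} one could prove the identity directly by induction on $n$, using Pascal's rule ${\ell - j_{0} + n \choose n} = {\ell - j_{0} + n - 1 \choose n-1} + {\ell - j_{0} + n - 1 \choose n}$ together with the geometric series, mirroring the derivation of Lemma \ref{negbinlemmatwo}; but the subtraction approach above is shorter and avoids repeating that recursion.
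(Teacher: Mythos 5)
Your proof is correct and is essentially the paper's own argument: both split the tail series $\sum_{\ell = j}^{\infty}$ as the full series of Lemma \ref{negbinlemmaone} minus the truncated series of Lemma \ref{negbinlemmatwo}, then simplify. The only cosmetic difference is that the paper pulls out the rescaling factors $\beta^{j_0 - j}$ and $\beta^{(j_0-1)-j}$ at the start, whereas you carry the exponent $\beta^{\ell - (j_0 - 1)}$ throughout and rescale once at the end; the substance is identical.
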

\begin{proof} The key to deriving this series is to use both Lemmas \ref{negbinlemmaone} and \ref{negbinlemmatwo}.  Here
{\footnotesize
\begin{align*}
\sum_{\ell = j}^{\infty}{\ell - j_{0} + n \choose n} \beta^{\ell - j} &= \beta^{j_{0} - j}\sum_{\ell - j_{0}}^{\infty}{\ell - j_{0} + n \choose n}\beta^{\ell - j_{0}}\\
&\quad - \beta^{(j_{0} - 1) - j}\sum_{\ell = j_{0}}^{j-1}{\ell - j_{0} + n \choose n}\beta^{\ell - (j_{0} - 1)} \\
&= \frac{\beta^{j_{0} - j}}{(1 - \beta)^{n+1}} - \beta^{(j_{0} - 1) - j}\left[\frac{\beta - \beta^{j - (j_{0} - 1)}}{(1 - \beta)^{n+1}}\right] \\
&\quad+ \sum_{k=1}^{n}\left[{j - j_{0} + k \choose k} - {j - j_{0} + k - 1 \choose k-1}\right]\frac{1}{(1 - \beta)^{n + 1 - k}} \\
&= \frac{1}{(1 - \beta)^{n+1}}  + \sum_{k=1}^{n}\left[{j - j_{0} + k \choose k} - {j - j_{0} + k - 1 \choose k-1}\right]\frac{1}{(1 - \beta)^{n + 1 - k}}.
\end{align*}
}
thus proving the claim. \end{proof}

\end{document}